\providecommand{\dd}{\mathrm{d}}
\DeclareMathOperator{\dom}{dom}
\DeclareMathOperator{\lspan}{span}
\theoremstyle{theorem}
\newtheorem{theorem}{Theorem}
\newtheorem{corollary}[theorem]{Corollary}
\newtheorem{proposition}[theorem]{Proposition}
\theoremstyle{definition}
\newtheorem{definition}[theorem]{Definition}
\newtheorem{assumption}[theorem]{Assumption}
\theoremstyle{remark}
\newtheorem{remark}[theorem]{Remark}
\newtheorem{example}[theorem]{Example}
\numberwithin{equation}{section}
\numberwithin{theorem}{section}
\begin{document}
\title[Cubature Methods for SPDEs]{Cubature Methods For Stochastic (Partial) Differential Equations In Weighted Spaces}
\author{Philipp D\"orsek \and Josef Teichmann \and Dejan Velu\v{s}\v{c}ek}
\address{ETH Zurich, D-MATH, R\"amistrasse 101, CH-8092 Zurich, Switzerland}
\email{\{philipp.doersek, josef.teichmann, dejan.veluscek\}@math.ethz.ch}
\label{firstpage}
\subjclass[2000]{Primary 60H15, 65C35; Secondary 46N30}
\keywords{Cubature on Wiener space, stochastic partial differential equations, high order weak approximation scheme}
\begin{abstract}
The cubature on Wiener space method, a high-order weak approximation scheme, is established for SPDEs in the case of unbounded characteristics and unbounded payoffs. We first introduce a recently described flexible functional analytic framework, so called weighted spaces, where Feller-like properties hold. A refined analysis of vector fields on weighted spaces then yields optimal convergence rates of cubature methods for stochastic partial differential equations of Da Prato-Zabczyk type. The ubiquitous stability for the local approximation operator within the functional analytic setting is proved for SPDEs, however, in the infinite dimensional case we need a newly introduced assumption on weak symmetry of the cubature formula. In finite dimensions, we use the UFG condition to obtain optimal rates of convergence on non-uniform meshes for nonsmooth payoffs with exponential growth.
\end{abstract}
\maketitle
\section{Introduction}
Cubature on Wiener space, a realization of the abstract KLV high order method after Shigeo Kusuoka \cite{Kusuoka2001}, Terry Lyons and Nicolas Victoir \cite{LyonsVictoir2004}, is a weak approximation scheme for stochastic differential equations. Significant advantages in comparison to other weak approximation schemes such as Taylor methods, see \cite{KloedenPlaten1992}, are that it respects the geometry of the problem, and that at least theoretically, it is possible to reach arbitrarily high rates of convergence without requiring the calculation of higher derivatives, see \cite[Theorem~2.4, Proposition~2.5]{LyonsVictoir2004}. The concrete construction of such cubature paths of high order is still quite difficult, see \cite{GyurkoLyons2011} for paths up to order $11$ for a single driving Brownian motion. Cubature schemes provide a time-discretization approximating the unknown expected value of a functional of the solution process of the SPDE by an expectation of an iteratively constructed function on a high-dimensional discrete product space. Often a direct evaluation of the functional on the discrete probability space is too expensive, therefore, several methods to speed-up the evaluation of cubature schemes such as recombination \cite{LittererLyons2007,SchmeiserSoreffTeichmann2007} or tree-based branching \cite{CrisanLyons2002} have emerged. Otherwise the functionals have to be evaluated with Monte Carlo or Quasi Monte Carlo algorithms on the discrete product probability space.

High-order weak approximation schemes provide interesting lower complexity alternatives to standard multi-level Monte Carlo schemes if Quasi Monte Carlo algorithms or deterministic algorithms can be applied for the evaluation of the constructed functionals. Indeed, multi-level Monte Carlo schemes lead to complexity estimates of order (almost) $ \mathcal{O}(\epsilon^{-2}) $, i.e.~to reach accuracy $ \epsilon $ a number of operations of order $ \epsilon^{-2} $ is necessary. In contrast, QMC evaluations of weak, high-order approximation schemes of order $ k $ lead to complexity estimates of order (almost) $ \mathcal{O}(\epsilon^{-1-1/k}) $, as long as the QMC integration yields optimal convergence (this in turn also depends on the dimension of integration space, which is moderate for high order methods). Hence we believe that it is worth analyzing in depth the functional analytic framework of cubature schemes, i.e.~we aim for constructing a flexible enough pool of Banach spaces of payoffs and Banach spaces of characteristics, where relevant problems from practice can be embedded.

In this work, we shall relax the regularity assumptions of the cubature method, similarly as was done in \cite{DoersekTeichmann2010,Doersek2011phdthesis,DoersekTeichmann2011} for the splitting approach of Syoiti Ninomiya and Nicolas Victoir \cite{NinomiyaVictoir2008}.
Consider a stochastic differential equation on $\mathbb{R}^n$ in its Stratonovich form,
\begin{equation}
  \dd X^x_t = \sum_{i=0}^{d}V_j(X^x_t)\circ\dd B^{j}_t.
\end{equation}
All initial work was based on the fundamental assumption that the vector fields $V_j\colon\mathbb{R}^n\to\mathbb{R}^n$ are bounded and $\mathrm{C}^{\infty}$-bounded. This is also a typical assumption in other approximation methods for stochastic differential equations, e.g., in \cite{TalayTubaro1991}. Some success in relaxing these assumptions, which are actually rarely satisfied in practical problems was achieved, at least for approximations of the splitting type, in works by Tanaka and Kohatsu-Higa \cite{TanakaKohatsuHiga2009} and Alfonsi \cite{Alfonsi2010}. While in the first one the focus was on extensions to L\'evy driving noise and in the second to CIR processes, in both approaches it was recognized that polynomially bounded payoffs are the correct context for problems with Lipschitz continuous vector fields.

Another approach was suggested in \cite{DoersekTeichmann2010}.
There, splitting schemes were analyzed on general weighted spaces, allowing in particular the approximation of Da Prato-Zabczyk stochastic partial differential equations where the drift part, the infinitesimal generator of a strongly continuous semigroup on the infinite dimensional state space, is not even continuous.

All these approaches profited from the special structure of splitting schemes, as there the stability or power boundedness of the discrete approximation operator can be shown by investigating every part separately.
Instead, we follow a similar idea as was applied to the stochastic Navier-Stokes equations in \cite{Doersek2011}.
We extend the results of \cite{BayerTeichmann2008} to more general coefficients and payoffs.
This allows us to obtain methods of order higher than 2 without having to resort to extrapolation, see \cite{BlanesCasas2005,OshimaTeichmannVeluscek2009}.
While the use of the weighted spaces from \cite{RoecknerSobol2006,DoersekTeichmann2010} is also mandatory here, we shall provide a refined analysis of the vector fields defined on these spaces.
This will allow us to do a Taylor expansion of the cubature approximations to compute the local approximation order.

While dealing with the stability, we shall use two different approaches.
In the finite dimensional case with sufficiently smooth vector fields, the Gronwall inequality yields the claim in a straightforward manner under a reasonable assumption of compatibility between the vector fields and the weight function.
In the infinite dimensional case, we apply the method of the moving frame from \cite{Teichmann2009}.
This leads to time dependent vector fields that are nonsmooth in the time component.
As this makes a Taylor expansion impossible, we introduce a \emph{weak symmetry condition} on cubature paths, an assumption usually satisfied by cubature schemes.
This allows us to obtain stability not only for Da Prato-Zabczyk equations with pseudocontractive generator, but also for stochastic differential equations on infinite dimensional state spaces, where the vector fields depend roughly, i.e., continuously, but not differentiably, on time.

Finally, we consider the effects of the UFG condition in our setting.
Under the same assumptions on the coefficients as in \cite{CrisanGhazali2007}, we are able to prove optimal rates of convergence on non-uniform meshes for nonsmooth payoffs that are allowed to grow exponentially.

There are many successful discretisation schemes for stochastic partial differential equations.
\cite{JentzenKloeden2009} gives an overview of strong and pathwise schemes.
Weak approximation schemes are more difficult.
Recently, it was proved in \cite{Debussche2011} that an implicit Euler scheme converges almost with weak rate $1/2$ for equations driven by space-time white noise, doubling the corresponding strong rate of convergence; see also the references in \cite{Debussche2011} for more background on weak approximation schemes for stochastic partial differential equations with space-time white noise.
In contrast, we restrict ourselves to finite-dimensional driving noise, but obtain the same weak rate of convergence as for finite-dimensional state spaces.

In our proofs, $C$ denotes a generic positive real constant that can change from line to line.

\section{$\mathcal{B}^{\psi}$ spaces}
We recall the following definition of spaces of functions with controlled growth, see also \cite{RoecknerSobol2006,DoersekTeichmann2010,Doersek2011,Doersek2011phdthesis,DoersekTeichmann2011}. Notice that we obtain Feller-like properties for SPDEs in this setting.
\begin{definition}
  Let $(X,\lVert\cdot\rVert_{X})$ be the dual space of a separable Banach space, and $\varphi\colon X\to (0,\infty)$ be bounded from below by some $\delta>0$.
  For a Banach space $(Y,\lVert\cdot\rVert_{Y})$, we set
  \begin{equation}
    \mathrm{B}^{\varphi}(X;Y)
    :=
    \left\{ f\colon X\to Y\colon\sup_{x\in X}\varphi(x)^{-1}\lVert f(x)\rVert_{Y}<\infty \right\},
  \end{equation}
  endowed with the $\varphi$-norm
  \begin{equation}
    \lVert f\rVert_{\varphi}
    :=
    \sup_{x\in X}\varphi(x)^{-1}\lVert f(x)\rVert_{Y}.
  \end{equation}
  Let $k\ge 0$.
  If $\varphi=(\varphi_j)_{j=0,\dots,k}$, $\varphi_j\colon X\to(0,\infty)$ bounded from below by some $\delta>0$, $j=0,\dots,k$, we set
  \begin{alignat}{2}{}
    \mathrm{B}^{\varphi}_k(X;Y)
    :=
    \bigl\{ f\in\mathrm{C}^{k}(X;Y)\colon 
    &\text{$\sup_{x\in X}\varphi_j(x)^{-1}\lVert D^j f(x)\rVert_{L_j(X;Y)}<\infty$} \notag \\
    &\text{for $j=0,\dots,k$} \bigl\}.
  \end{alignat}
  $\mathrm{B}^{\varphi}_{k}(X;Y)$ is endowed with the norm
  \begin{equation}
    \lVert f\rVert_{\varphi,k}
    :=
    \lVert f\rVert_{\varphi_0} + \sum_{j=1}^{k}\lvert f\rvert_{\varphi_j,j},
  \end{equation}
  where the seminorms $\lvert\cdot\rvert_{\varphi_j,j}$ are given by
  \begin{equation}
    \lvert f\rvert_{\varphi_j,j}
    :=
    \sup_{x\in X}\varphi_j(x)^{-1}\lVert D^j f(x)\rVert_{L_j(X;Y)}.
  \end{equation}
  Here, $L_j(X;Y)$ denotes the space of bounded multilinear forms $a\colon X^j\to Y$, and is endowed with the norm
  \begin{equation}
    \lVert a\rVert_{L_j(X;Y)}
    :=
    \sup_{\lVert h_i\rVert\le 1, i=1,\dots,j}\lVert a(h_1,\dots,h_j)\rVert_{Y}.
  \end{equation}
	For simplicity, we set $L_0(X;Y):=Y$; we remark that $L_1(X;Y)$ is the space of bounded linear operators $X\to Y$, and in this case, the above norm is the usual operator norm.
  If $Y=\mathbb{R}$, we define $\mathrm{B}^{\varphi}(X):=\mathrm{B}^{\varphi}(X;\mathbb{R})$ and $\mathrm{B}^{\varphi}_k(X):=\mathrm{B}^{\varphi}_k(X;\mathbb{R})$.
\end{definition}
\begin{definition}
  \label{def:admissibleweightfunction}
  Let $(X,\lVert\cdot\rVert_{X})$ be the dual space of a separable Banach space.
  A function $\varphi$ is called \emph{admissible weight function} if and only if $\varphi\colon X\to(0,\infty)$ is such that $K_R:=\left\{ x\in X\colon\varphi(x)\le R \right\}$ is weak-$*$ compact for all $R>0$.

  It is called \emph{D-admissible weight function} if and only if it is an admissible weight function and for every $x\in X$, there exists some $R>0$ such that $B_\varepsilon(x)\subset K_R$ for some $\varepsilon>0$, where $B_\varepsilon(x):=\left\{ y\in X\colon \lVert y-x\rVert_X\le\varepsilon \right\}$ is the closed $\varepsilon$-ball around $x$.

	It is called \emph{C-admissible weight function} if and only if $\varphi$ is bounded from below, weak-$*$ lower semicontinuous, and if for every $x\in X$, there exists some $\varepsilon>0$ such that $\varphi$ is bounded on $B_{\varepsilon}(x)$.
\end{definition}
\begin{remark}
  We do not require C-admissible weight functions to be admissible.
  However, $\varphi$ is D-admissible if and only if it is admissible and C-admissible.
\end{remark}
\begin{theorem}
  \label{thm:Bpsikcomplete}
  Let $k\in\mathbb{N}$, and assume that $\varphi=(\varphi_j)_{j=0,\dots,k}$ is a vector of C-admissible weight functions.
  Then, $\mathrm{B}^{\varphi}_k(X;Y)$ is a Banach space.
\end{theorem}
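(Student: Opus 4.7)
The plan is to take a Cauchy sequence $(f_n)_{n\in\mathbb{N}}$ in $\mathrm{B}^{\varphi}_k(X;Y)$, construct a candidate limit together with its derivatives via a pointwise/locally-uniform argument, and then verify that the candidate is $\mathrm{C}^k$, lies in $\mathrm{B}^{\varphi}_k(X;Y)$, and is the actual norm limit. Fix $x\in X$ and $j\in\{0,\dots,k\}$. By C-admissibility of $\varphi_j$, there exist $\varepsilon>0$ and $M_j<\infty$ with $\varphi_j(y)\le M_j$ for every $y\in B_\varepsilon(x)$; choosing a common $\varepsilon$ for $j=0,\dots,k$ (possible by taking the minimum over finitely many $j$) yields
\begin{equation*}
  \sup_{y\in B_\varepsilon(x)}\lVert D^j f_n(y)-D^j f_m(y)\rVert_{L_j(X;Y)}
  \le M_j\,\lvert f_n-f_m\rvert_{\varphi_j,j}
  \le M_j\,\lVert f_n-f_m\rVert_{\varphi,k}.
\end{equation*}
Hence $(D^j f_n)$ is Cauchy in the Banach space $\mathrm{C}(B_\varepsilon(x);L_j(X;Y))$ with its supremum norm, and converges uniformly on $B_\varepsilon(x)$ to some continuous $g_j^{(x)}$. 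The limits agree on overlaps, so there is a globally defined continuous $g_j\colon X\to L_j(X;Y)$ with $D^j f_n\to g_j$ locally uniformly on $X$.

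Next I would promote this to $\mathrm{C}^k$-regularity of $g_0$ with $D^j g_0=g_j$. This is the standard calculus fact that if $f_n\to g_0$ and $Df_n\to g_1$ locally uniformly on open sets in a Banach space, then $g_0$ is Fr\'echet differentiable with $Dg_0=g_1$; iteration on $j=1,\dots,k$ gives $g_0\in\mathrm{C}^k(X;Y)$ with $D^jg_0=g_j$. The local uniform convergence required is exactly what we established on each ball $B_\varepsilon(x)$, which is where local boundedness in the definition of C-admissibility plays its essential role.

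It remains to show $g_0\in\mathrm{B}^{\varphi}_k(X;Y)$ and $\lVert f_n-g_0\rVert_{\varphi,k}\to 0$. Because a Cauchy sequence is bounded, $C:=\sup_n\lVert f_n\rVert_{\varphi,k}<\infty$; for any $x\in X$ and $j$, passing to the limit in $n$ in $\varphi_j(x)^{-1}\lVert D^j f_n(x)\rVert_{L_j(X;Y)}\le C$ (using the pointwise convergence $D^jf_n(x)\to D^jg_0(x)$ together with continuity of the norm) yields $\lvert g_0\rvert_{\varphi_j,j}\le C$, hence $g_0\in\mathrm{B}^{\varphi}_k(X;Y)$. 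For the norm convergence, given $\varepsilon>0$ pick $N$ with $\lVert f_n-f_m\rVert_{\varphi,k}<\varepsilon$ for all $n,m\ge N$; then for fixed $n\ge N$, $x\in X$ and $j$, letting $m\to\infty$ in $\varphi_j(x)^{-1}\lVert D^j f_n(x)-D^jf_m(x)\rVert_{L_j(X;Y)}\le\varepsilon$ and subsequently taking the supremum over $x$ and summing over $j$ yields $\lVert f_n-g_0\rVert_{\varphi,k}\le(k+1)\varepsilon$.

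The only non-routine point is the upgrade from pointwise to locally uniform convergence of derivatives, which is precisely what the local boundedness clause in the C-admissibility definition is designed to provide; the weak-$*$ lower semicontinuity and lower bound from C-admissibility are not needed here, but only guarantee that the framework is useful elsewhere. After that, everything reduces to the classical completeness arguments for weighted $\mathrm{C}^k$-spaces on Banach spaces.
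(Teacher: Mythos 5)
Your proposal is correct and follows essentially the same route as the paper's proof: use the local boundedness clause of C-admissibility to turn the $\varphi$-Cauchy property into a $\mathrm{C}^k$-Cauchy property on closed balls, invoke the locality of differentiability to obtain $f\in\mathrm{C}^k(X;Y)$, and then pass to the limit in the weighted estimates. You merely spell out the steps the paper leaves as ``easy to see,'' and your observation that only the local boundedness part of C-admissibility is used here is accurate.
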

\begin{proof}
  Let $(f_n)_{n\in\mathbb{N}}$ be a Cauchy sequence in this space.
  It is clear that $f_n$ admits a pointwise limit $f$.
  Moreover, it follows that for every $x\in X$ and every closed $\varepsilon$-ball $B_{\varepsilon}(x)$, $f_n|_{B_{\varepsilon}(x)}$ are Cauchy sequences in $\mathrm{C}^k(B_{\varepsilon}(x);Y)$.
  But this entails that $f|_{B_{\varepsilon}(x)}\in\mathrm{C}^k(B_{\varepsilon}(x);Y)$.
  As differentiability is a local property, we see that $f\in\mathrm{C}^k(X;Y)$.
  The necessary estimates for $f$ and its derivatives are now easy to see.
\end{proof}
\begin{remark}
	A counterexample showing the necessity of C-admissibility in Theorem~\ref{thm:Bpsikcomplete} is given in Appendix~\ref{sec:counterexample}.
\end{remark}
\begin{definition}
  Let $(X,\lVert\cdot\rVert_{X})$ be the dual space of a separable Banach space, its predual being $W$, $X=W^{*}$, and $(Y,\lVert\cdot\rVert_{Y})$ a Banach space.
  The space of \emph{bounded smooth cylindrical functions} is defined by
  \begin{alignat}{2}{}
    \mathcal{A}(X,Y)
    :=
    \bigl\{ 
    	f\colon X\to Y
	\colon 
	&\text{$f=g(\langle \cdot,w_1\rangle,\dots,\langle \cdot,w_n\rangle)$} \notag \\
	&\text{for some $g\in\mathrm{C}_b^\infty(\mathbb{R}^n;Y)$}, \notag \\
	&\text{$w_i\in W$, $i=1,\dots,n$, $n\in\mathbb{N}$} \bigr\}.
  \end{alignat}
	Here, $\langle\cdot,\cdot\rangle$ denotes the dual pairing of $X$ and $W$.
  For $Y=\mathbb{R}$, we set $\mathcal{A}(X):=\mathcal{A}(X,\mathbb{R})$.
\end{definition}
\begin{definition}
  Let $(X,\lVert\cdot\rVert_{X})$ be the dual space of a separable Banach space and $(Y,\lVert\cdot\rVert_{Y})$ be a Banach space. 
  Let $\psi$ be an admissible weight function on $X$.

  The space $\mathcal{B}^{\psi}(X;Y)$ is the closure of $\mathcal{A}(X,Y)$ in $\mathrm{B}^{\psi}(X;Y)$.
  For $Y=\mathbb{R}$, we set $\mathcal{B}^{\psi}(X):=\mathcal{B}^{\psi}(X;\mathbb{R})$.
\end{definition}
\begin{remark}
  \cite[Theorem~4.2]{DoersekTeichmann2010} shows that our definition of $\mathcal{B}^{\psi}(X)$ here agrees with our earlier definition from \cite[Definition~2.2]{DoersekTeichmann2010}.
  Due to \cite[Theorem~2.7]{DoersekTeichmann2010}, the functions in $\mathcal{B}^{\psi}(X)$ are characterized by the property that both $f|_{K_R}\in\mathrm{C}( (K_R)_{w*} )$ and
  \begin{equation}
    \lim_{R\to\infty}\sup_{x\in X\setminus K_R}\psi(x)^{-1}\lvert f(x)\rvert = 0.
  \end{equation}
\end{remark}

\begin{definition}
  Let $(X,\lVert\cdot\rVert_{X})$ be the dual space of a separable Banach space and $(Y,\lVert\cdot\rVert_{Y})$ be a Banach space.
  Let $\psi=(\psi_j)_{j=0,\dots,k}$ with $\psi_j$ D-admissible weight functions for $j=0,\dots,k$.
  The space $\mathcal{B}^{\psi}_{k}(X;Y)$ is the closure of $\mathcal{A}(X,Z)$ in $\mathrm{B}^{\psi}_k(X;Y)$.
  For $Y=\mathbb{R}$, we set $\mathcal{B}^{\psi}_k(X):=\mathcal{B}^{\psi}_k(X;\mathbb{R})$. In particular, by Theorem~\ref{thm:Bpsikcomplete}, it follows that $\mathcal{B}^{\psi}_k(X)$ is a separable Banach space.
\end{definition}

One essential property of $ \mathcal{B}^\psi(X) $ spaces is that the dual space of this separable Banach space is a well understood space of Radon measures, such as in the case of $ C_0(X) $ for locally compact spaces $ X $.

\begin{theorem}[Riesz representation for $\mathcal{B}^\psi(X)$]\label{theorem:rieszrepresentation} 
Let $\ell\colon\mathcal{B}^\psi(X)\to\mathbb{R}$ be a continuous linear functional. Then, there exists a finite signed Radon measure $\mu$ on $X$ such that
\begin{equation}
\ell(f)=\int_{X}f(x)\mu(\dd x)\quad\text{for all $f\in\mathcal{B}^\psi(X)$.}
\end{equation}
Furthermore,
\begin{equation}
\label{eq:rieszrepresentation-psiintbound}
\int_{X}\psi(x)\lvert \mu\rvert(\dd x) = \lVert \ell\rVert_{L(\mathcal{B}^\psi(X),\mathbb{R})},
\end{equation}
where $\lvert\mu\rvert$ denotes the total variation measure of $\mu$.
\end{theorem}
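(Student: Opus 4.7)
The plan is to identify $\mathcal{B}^{\psi}(X)$ isometrically with a closed subspace of $C_{0}(\hat{X})$ on an auxiliary Hausdorff space $\hat{X}$, apply the classical Riesz--Markov theorem there, and pull the resulting measure back to $X$ via division by $\psi$.

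First I would construct $\hat{X}$ by topologising $X$ with the final topology $\tau$ associated with the inclusions $(K_{R},w^{*})\hookrightarrow X$, so that $U\subset X$ is $\tau$-open iff $U\cap K_{R}$ is relatively weak-$*$ open in $K_{R}$ for every $R>0$. Since the predual $W$ is separable, each $K_{R}$ is weak-$*$ metrizable, and hence $\hat{X}=(X,\tau)$ is a Hausdorff, $\sigma$-compact space in which every $K_{R}$ is compact.

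Next, invoking the characterization recalled in the remark following the definition of $\mathcal{B}^{\psi}(X)$ (i.e.\ Theorem~2.7 of \cite{DoersekTeichmann2010}), namely that $f\in\mathcal{B}^{\psi}(X)$ iff $f|_{K_{R}}$ is weak-$*$ continuous on every $K_{R}$ and $\psi^{-1}\lvert f\rvert$ vanishes at infinity with respect to the exhaustion by the $K_{R}$, the assignment $\Phi(f):=\psi^{-1}f$ provides an isometric embedding of $\mathcal{B}^{\psi}(X)$ into $C_{0}(\hat{X})$. Extending $\ell\circ\Phi^{-1}$ from the range of $\Phi$ to all of $C_{0}(\hat{X})$ by Hahn--Banach without increase of norm and then applying the classical Riesz--Markov theorem yields a finite signed Radon measure $\nu$ on $\hat{X}$ with $\lvert\nu\rvert(\hat{X})=\lVert\ell\rVert$ and $\ell(f)=\int_{\hat{X}}\psi(x)^{-1}f(x)\,\nu(\dd x)$ for every $f\in\mathcal{B}^{\psi}(X)$. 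Defining $\mu$ on $X$ by $\mu(\dd x):=\psi(x)^{-1}\nu(\dd x)$ then gives $\ell(f)=\int_{X}f\,\dd\mu$, and because $\lvert\mu\rvert=\psi^{-1}\lvert\nu\rvert$ one obtains $\int_{X}\psi\,\dd\lvert\mu\rvert=\lvert\nu\rvert(\hat{X})=\lVert\ell\rVert$; the converse inequality $\lVert\ell\rVert\le\int_{X}\psi\,\dd\lvert\mu\rvert$ is immediate from $\lvert f(x)\rvert\le\psi(x)\lVert f\rVert_{\psi}$.

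The main obstacle will be the identification step: verifying that $\Phi(f)=\psi^{-1}f$ is actually $\tau$-continuous and genuinely $C_{0}$ on $\hat{X}$. Admissibility of $\psi$ only supplies weak-$*$ lower semicontinuity of the weight, so division by $\psi$ is delicate, and one must also check that $\hat{X}$ is regular enough for the classical Riesz--Markov theorem to apply. I would resolve this either by invoking D-admissibility of $\psi$ (under which $\hat{X}$ becomes locally compact and $\psi$ can be arranged continuous) or by refining $\tau$ so as to render $\psi$ continuous before the Riesz--Markov step; a secondary subtlety, once $\nu$ is produced, is confirming that the push-back $\mu$ gives a bona fide Radon measure with respect to the weak-$*$ Borel $\sigma$-algebra on $X$, but this follows because $\hat{X}$ and $X$ share the same underlying point set and the $\tau$-Borel sets contain the weak-$*$ Borel sets.
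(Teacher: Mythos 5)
The paper does not actually prove this theorem in the text --- it is imported from \cite{DoersekTeichmann2010} --- so your proposal has to stand on its own, and it does not: the very obstacle you flag in your last paragraph is fatal as stated, and neither of your proposed remedies closes it. The map $\Phi(f)=\psi^{-1}f$ does not land in $C(\hat{X})$, let alone $C_{0}(\hat{X})$. Indeed the constant function $1$ lies in $\mathcal{A}(X)\subset\mathcal{B}^{\psi}(X)$, so for $\Phi$ to be well defined $\psi^{-1}$, hence $\psi$, would have to be $\tau$-continuous, whereas admissibility only yields weak-$*$ lower semicontinuity. D-admissibility does not help: it adds local boundedness, not continuity. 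For example $\psi(x)=1+x^{2}+\chi_{(0,\infty)}(x)$ on $X=\mathbb{R}$ is D-admissible (its sublevel sets are compact intervals, and $\psi$ is lower semicontinuous and locally bounded), here $\tau$ is the usual topology, and $\psi^{-1}=\Phi(1)$ is discontinuous at $0$; moreover the theorem assumes only admissibility. Refining $\tau$ to force continuity of $\psi$ changes which functions are continuous and can destroy compactness of the $K_{R}$, on which your $\sigma$-compactness, the $C_{0}$ structure, and the pull-back of Radon regularity all rest. A second gap of the same origin: the classical Riesz--Markov theorem requires $\hat{X}$ to be locally compact Hausdorff, and the final topology of the exhaustion $(K_{R})_{R>0}$ is in general only a hemicompact $k$-space (compare $\varinjlim_{n}\mathbb{R}^{n}$); local compactness of $\hat X$ is again essentially a D-admissibility statement that is not among the hypotheses.

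What does survive from your argument is the easy half of the norm identity ($\lVert\ell\rVert\le\int_{X}\psi\,\dd\lvert\mu\rvert$ from $\lvert f(x)\rvert\le\psi(x)\lVert f\rVert_{\psi}$) and the observation that $\tau$-compact sets are weak-$*$ compact, so regularity would transfer. To repair the core you should avoid dividing by $\psi$ altogether. Two standard routes: (i) decompose $\ell=\ell^{+}-\ell^{-}$ in the Banach lattice $\mathcal{B}^{\psi}(X)$ and verify the Daniell--Stone $\sigma$-continuity condition for $\ell^{\pm}$ --- if $f_{n}\downarrow 0$ pointwise, split $X$ into $K_{R}$, where Dini's theorem applies on the weak-$*$ compact metrizable set $K_{R}$, and $X\setminus K_{R}$, where $\psi^{-1}f_{1}$ is uniformly small by the characterization of $\mathcal{B}^{\psi}(X)$; or (ii) work on each compact $(K_{R})_{w*}$ directly, using Stone--Weierstrass density of $\mathcal{A}(X)|_{K_{R}}$ in $\mathrm{C}((K_{R})_{w*})$ to produce a compatible family of Radon measures $\mu_{R}$ with $\int_{K_{R}}\psi\,\dd\lvert\mu_{R}\rvert\le\lVert\ell\rVert$, and pass to the limit. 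As written, your proposal identifies the right picture but does not contain a proof.
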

As every such measure defines a continuous linear functional on $\mathcal{B}^\psi(X)$, this completely characterizes the dual space of $\mathcal{B}^\psi(X)$.

This allows for the introduction of the \emph{generalized Feller property}, such that we can speak about strongly continuous semigroups on spaces of functions with growth controlled by $ \psi $, in particular functions which are in general unbounded.

Let $(P_t)_{t\ge 0}$ be a family of bounded linear operators $P_t\colon\mathcal{B}^{\psi}(X)\to\mathcal{B}^{\psi}(X)$ with the following properties:
\begin{enumerate}
		\renewcommand{\theenumi}{{\bf F\arabic{enumi}}}
	\item
		\label{enu:defgenfeller-0id}
		$P_0=I$, the identity on $\mathcal{B}^{\psi}(X)$,
	\item
		$P_{t+s}=P_tP_s$ for all $t$, $s\ge 0$,
	\item
		\label{enu:defgenfeller-pwconv}
		for all $f\in\mathcal{B}^{\psi}(X)$ and $x\in X$, $\lim_{t\to 0+}P_t f(x)=f(x)$,
	\item
		\label{enu:defgenfeller-bound}
		there exist a constant $C\in\mathbb{R}$ and $\varepsilon>0$ such that for all $t\in [0,\varepsilon]$, $\lVert P_t\rVert_{L(\mathcal{B}^{\psi}(X))}\le C$,
	\item
		\label{enu:defgenfeller-positivity}
		$P_t$ is positive for all $t\ge 0$, that is, for $f\in\mathcal{B}^{\psi}(X)$, $f\ge 0$, we have $P_t f\ge 0$.
\end{enumerate}
Alluding to \cite[Chapter~17]{Kallenberg1997}, such a family of operators will be called a \emph{generalized Feller semigroup}.

We shall now prove that semigroups satisfying \ref{enu:defgenfeller-0id} to \ref{enu:defgenfeller-bound} are actually strongly continuous, a direct consequence of Lebesgue's dominated convergence theorem with respect to the measure existing due to Riesz representation.
\begin{theorem}
	\label{theorem:Ttstrongcont}
	Let $(P_t)_{t\ge 0}$ satisfy \ref{enu:defgenfeller-0id} to \ref{enu:defgenfeller-bound}. 
	Then, $(P_t)_{t\ge 0}$ is strongly continuous on $\mathcal{B}^{\psi}(X)$, that is,
	\begin{equation}
		\lim_{t\to 0+}\lVert P_t f-f\rVert_{\psi}=0
		\quad\text{for all $f\in\mathcal{B}^{\psi}(X)$}.
	\end{equation}
\end{theorem}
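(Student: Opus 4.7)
\medskip
\noindent\textbf{Proof plan.}
The strategy is to combine Theorem~\ref{theorem:rieszrepresentation} with Lebesgue's dominated convergence theorem in order to obtain weak continuity of $t\mapsto P_t f$ at $t=0$, and then to upgrade this to strong continuity via a standard semigroup-averaging argument.

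For the weak-continuity step, fix $f\in\mathcal{B}^\psi(X)$ and any $\ell\in\mathcal{B}^\psi(X)^*$; by Theorem~\ref{theorem:rieszrepresentation}, $\ell$ is represented by a finite signed Radon measure $\mu$ on $X$ with $\int_X\psi\,d\lvert\mu\rvert=\lVert\ell\rVert$. Property \ref{enu:defgenfeller-pwconv} gives the pointwise convergence $(P_t f)(x)\to f(x)$ as $t\to 0+$ for every $x\in X$, while \ref{enu:defgenfeller-bound} together with the definition of the $\psi$-norm provides the $\lvert\mu\rvert$-integrable, $t$-uniform dominating function
\[
  \lvert(P_t f)(x)-f(x)\rvert\le (C+1)\lVert f\rVert_\psi\,\psi(x)\qquad\text{for all }t\in[0,\varepsilon].
\]
Lebesgue's dominated convergence theorem therefore yields $\ell(P_t f-f)=\int_X(P_t f-f)\,d\mu\to 0$, so that $P_t f\to f$ weakly in $\mathcal{B}^\psi(X)$ as $t\to 0+$.

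For the strong-continuity step I would introduce the averaged elements $g_h:=\int_0^h P_s f\,ds$, defined as Pettis integrals in $\mathcal{B}^\psi(X)$; existence follows from the weak measurability of $s\mapsto P_s f$ obtained above, together with the uniform bound supplied by \ref{enu:defgenfeller-bound}. The semigroup law and a change of variable give
\[
  P_t g_h-g_h=\int_h^{t+h}P_s f\,ds-\int_0^t P_s f\,ds,
\]
so that $\lVert P_t g_h-g_h\rVert_\psi\le 2Ct\lVert f\rVert_\psi\to 0$ as $t\to 0+$. Hence the norm-closed linear subspace $Y:=\{g\in\mathcal{B}^\psi(X):\lVert P_t g-g\rVert_\psi\to 0\}$ contains every $g_h$, while $g_h/h\to f$ weakly as $h\to 0+$ (by right-continuity at $0$ of $s\mapsto\ell(P_s f)$ for each $\ell$). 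By Mazur's theorem the weak and norm closures of the convex subspace $Y$ coincide, so $f\in Y$ and the proof is complete.

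The principal technical obstacle is the rigorous construction of $g_h$ as an element of $\mathcal{B}^\psi(X)$ itself rather than merely of its bidual; this step rests on the weak measurability of $s\mapsto P_s f$ combined with the concrete description of the dual $\mathcal{B}^\psi(X)^*$ provided by Theorem~\ref{theorem:rieszrepresentation}. Once this is in place, the only remaining ingredients are the semigroup identity above and the elementary fact that $Y$ is norm-closed, which follows from a three-epsilon argument using \ref{enu:defgenfeller-bound}.
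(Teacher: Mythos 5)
Your argument is correct and follows essentially the same route as the paper: the decisive step, weak right-continuity of $t\mapsto P_t f$ at $t=0$ via the Riesz representation of the dual of $\mathcal{B}^\psi(X)$ (Theorem~\ref{theorem:rieszrepresentation}) together with dominated convergence using the bound from \ref{enu:defgenfeller-bound}, is exactly what the paper does. The paper then simply invokes \cite[Theorem I.5.8]{EngelNagel2000} to pass from weak to strong continuity, and your averaging-plus-Mazur argument is precisely the standard proof of that cited result, so the two proofs coincide in substance.
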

\begin{proof}
	By \cite[Theorem I.5.8]{EngelNagel2000}, we only have to prove that $t\mapsto \ell(P_t f)$ is right continuous at zero for every $f\in\mathcal{B}^{\psi}(X)$ and every continuous linear functional $\ell\colon\mathcal{B}^{\psi}(X)\to\mathbb{R}$. Due to Theorem~\ref{theorem:rieszrepresentation}, we know that there exists a signed measure $\nu$ on $X$ such that $\ell(g)=\int_{X}g\dd\nu$ for every $g\in\mathcal{B}^{\psi}(X)$. By \ref{enu:defgenfeller-bound}, we see that for every $t\in[0,\varepsilon]$,
	\begin{equation}
		\lvert P_t f(x)\rvert
		\le C \psi(x).
	\end{equation}
Due to \eqref{eq:rieszrepresentation-psiintbound}, the dominated convergence theorem yields
	\begin{alignat}{2}{}
		\lim_{t\to 0+}\int_{X}P_t f(x)\nu(\dd x) = \int_{X}f(x)\nu(\dd x),
	\end{alignat}
	and the claim follows.
	Here, the integrability of $ \psi $ with respect to the total variation measure $ \lvert\nu\rvert $ enters in an essential way.
\end{proof}

\section{Vector fields and directional derivatives}

When we ask for convergence rates we have to specify large enough sets of test functions within the basic $ \mathcal{B}^\psi(X)$-spaces. For this purpose we need to analyze directional derivatives and their functional analytic behavior. This can be done within the setting of $ \mathcal{B}_k^\psi(X;Y) $ spaces.

Let $(X,\lVert\cdot\rVert_{X})$ be the dual space of a separable Banach space. Given $(Z,\lVert\cdot\rVert_{Z})$ the dual space of another separable Banach space that is embedded in $X$, we derive conditions on $V\colon Z\to X$ such that the \emph{directional derivative} $g\in\mathcal{B}^{\hat{\psi}}_{k-1}(Z)$, where
\begin{equation}
	g(z) := Df(z)(V(z))
	\quad\text{for $z\in Z$}
\end{equation}
and $f\in\mathcal{B}^{\psi}_{k}(X)$.
Here, $\psi=(\psi_j)_{j=0,\dots,k}$ and $\hat{\psi}=(\hat{\psi}_j)_{j=0,\ldots,k-1}$ are vectors of D-admissible weight functions on $X$ and $Z$, respectively.

We shall assume that $V\in\mathrm{B}^\varphi_{k-1}(Z;X)$ for some vector $\varphi=(\varphi_j)_{j=0,\ldots,k-1}$ of C-admissible weight functions on $Z$.
Then, $V$ is $k-1$ times continuously Fr\'echet differentiable.
As $f\in\mathrm{C}^{k}(X)$, the Leibniz rule yields
\begin{equation}
	D^j g(z)(h_1,\cdots,h_j)
	=
	\sum_{i=0}^{j}\frac{1}{i!(j-i)!}\sum_{\sigma\in\mathcal{S}_j}g_{j,i}(z,h_{\sigma_1},\cdots,h_{\sigma_j}),
	\quad j=0,\dots,k-1.
\end{equation}
Here, $\mathcal{S}_j$ denotes the symmetric group with $j$ elements, and
\begin{equation}
	\label{eq:vectorfields-defngji}
	g_{j,i}(z,h_1,\cdots,h_j) := D^{i+1}f(z)(h_{1},\cdots,h_{i}, D^{j-i} V(z)(h_{i+1},\cdots,h_{j}) ).
\end{equation}
In particular, if we assume that for some constant $C>0$,
\begin{equation}
	\label{eq:vectorfields-phipsirelation}
	\hat{\psi}_j(z) 
	\ge
	C^{-1}\sum_{i=0}^{j}\binom{j}{i}\psi_{i+1}(z)\varphi_{j-i}(z)
	\quad\text{for $j=0,\cdots,k-1$},
\end{equation}
it follows that $g\in\mathrm{B}^{\hat{\psi}}_{k-1}(Z)$.

It is not so straightforward to prove that $g$ can also be approximated by functions in $\mathcal{A}(X)$, which would imply $g\in\mathcal{B}^{\hat{\psi}}_{k-1}(Z)$.
In \cite{Doersek2011phdthesis}, a general theory for multiplication operators on $\mathcal{B}^\psi$ spaces is derived.
Here, we take a different route, focusing on the problem at hand.
The following definition is essential.
\begin{definition}
	\label{def:vectorfield}
	Given a Banach space $(X,\lVert\cdot\rVert_X)$ and the dual space $(Z,\lVert\cdot\rVert_Z)$ of a separable Banach space.
	Let $V\in\mathrm{B}^{\varphi}_k(Z;X)$ with $\varphi$ a given vector of C-admissible weight functions on $Z$.
	We say that $V\in\mathcal{C}^{\varphi}_k(Z;X)$ if and only if for every $y\in X^*$, there exists a constant $C_{V,y}>0$ such that for all $R>0$, there exists a sequence $(v_n)_{n\in\mathbb{N}}\subset\mathcal{A}(Z)$ with $\sup_{n\in\mathbb{N}}\lVert v_n\rVert_{\varphi,k}\le C_{V,y}$ such that, with $v:=y\circ V$,
	\begin{equation}
		\lim_{n\to\infty}\lVert v - v_n \rVert_{\mathrm{C}^k(B_R(0))}=0.
	\end{equation}
	Here, $B_R(0)$ is the closed unit ball of radius $R$ in $Z$, and
	\begin{equation}
		\lVert g\rVert_{\mathrm{C}^k(B_R(0))}
		:=
		\sum_{j=0}^{k}\sup_{z\in B_R(0)}\lVert D^j g(z)\rVert_{L_j(Z)}.
	\end{equation}
\end{definition}
\begin{remark}
	It is clear that vector fields such as those from \cite[Section~2.2]{DoersekTeichmann2011} satisfy the above assumption.
	More generally, if $Z$ is a Hilbert space and is compactly embedded into a larger Hilbert space $Y$ such that $y\circ V$ can be extended to a smooth mapping $Y\to\mathbb{R}$ lying in $\mathrm{C}_b^k(Y;\mathbb{R})$ for all $y\in X^{*}$, then the above assumption is satisfied, i.e., $V\in\mathcal{C}^{\varphi}_k(Z;X)$ for every vector $\varphi$ of C-admissible weight functions on $Z$. Indeed, the extension of $ y \circ V $ and its derivatives are continuous on $ Y $, whence uniformly continous on the compact set $ B_R(0) $. Let us fix a sequence of increasing finite-dimensional, orthogonal projections $ \pi_n \to  \operatorname{id}_Y$ converging strongly to the identity: composing the extension of $ y \circ V $ with $ \pi_n $ yields a pointwise converging, equicontinous sequence of cylindrical function on $ B_R(0) $, which is -- up to a smoothing argument -- the desired assertion.

See also \cite[Theorem~5]{DoersekTeichmann2011} and \cite[Theorem~2.39]{Doersek2011phdthesis} for comparable arguments. In particular, this implies that Nemytskii operators are included in our setup if $Z$ is a Sobolev space of sufficiently smooth functions, see also \cite[Example~2.48]{Doersek2011phdthesis}.

This definition should also be compared to the form of the multiplicative noise suggested in \cite[Remark~2.3]{Debussche2011}.
	It is similar in spirit to the definition of $\mathcal{C}^{\varphi}_k(H;H)$, as there, $A$ is assumed to be a negative self-adjoint operator with a compact inverse.
	Hence, if we consider a single component of the noise, $x\mapsto\tilde{\sigma}( (-A)^{-1/4}x )$, with $\tilde{\sigma}\colon H\to H$ a $\mathrm{C}^3$-function with derivatives bounded up to order $3$, it satisfies our assumptions given above and hence lies in $\mathcal{C}^{\varphi}_3(H)$ with $\varphi_0(x):=(1+\lVert x\rVert_H^2)^{1/2}$ and $\varphi_j(x):=1$, $j\ge 1$.
\end{remark}

\begin{theorem}
	\label{thm:vectorfieldsBxC}
	Fix $k\ge 1$.
	Let $\psi=(\psi_i)_{i=0,\ldots,k}$ be a vector of D-admissible weight functions on $X$, and $\hat{\psi}=(\hat{\psi}_j)_{j=0,\ldots,k-1}$ a vector of D-admissible and $\varphi=(\varphi_j)_{j=0,\ldots,k-1}$ a vector of C-admissible weight functions on $Z$.
	Suppose \eqref{eq:vectorfields-phipsirelation}.

	Then, the Lie derivative $\mathcal{L}\colon\mathcal{C}^{\varphi}_{k-1}(Z;X)\times\mathcal{B}^{\psi}_{k}(X)\to\mathcal{B}^{\hat{\psi}}_{k-1}(Z)$ defined through
	\begin{equation}
		\mathcal{L}(V,f)(z)
		:=
		\mathcal{L}_V f(z)
		:=Df(z)(V(z))
	\end{equation}
	is a bilinear, bounded operator.
\end{theorem}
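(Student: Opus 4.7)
My plan is to verify the three assertions packaged into the theorem---bilinearity, the operator-norm bound, and the refined claim that $\mathcal{L}_V f$ lies in $\mathcal{B}^{\hat\psi}_{k-1}(Z)$ rather than only in $\mathrm{B}^{\hat\psi}_{k-1}(Z)$---in that order. Bilinearity is immediate from the formula $\mathcal{L}_V f(z) = Df(z)(V(z))$. The norm bound is obtained from the Leibniz expansion of $D^j g(z)$ already displayed in the text: I estimate the factors by $\lVert D^{i+1}f(z)\rVert_{L_{i+1}(X)} \le \lVert f\rVert_{\psi,k}\,\psi_{i+1}(z)$ and $\lVert D^{j-i}V(z)\rVert_{L_{j-i}(Z;X)} \le \lVert V\rVert_{\varphi,k-1}\,\varphi_{j-i}(z)$, and then collect the products $\psi_{i+1}\varphi_{j-i}$ into $\hat\psi_j$ through \eqref{eq:vectorfields-phipsirelation}, giving
\begin{equation*}
\lVert\mathcal{L}_V f\rVert_{\hat\psi,k-1} \le C\,\lVert V\rVert_{\varphi,k-1}\,\lVert f\rVert_{\psi,k}.
\end{equation*}
In particular, $\mathcal{L}_V f \in \mathrm{B}^{\hat\psi}_{k-1}(Z)$.

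To promote $\mathcal{L}_V f$ into the closed subspace $\mathcal{B}^{\hat\psi}_{k-1}(Z)=\overline{\mathcal{A}(Z)}$ (closed by Theorem~\ref{thm:Bpsikcomplete}), I first reduce to the cylindrical case: by the bilinear bound and density of $\mathcal{A}(X)$ in $\mathcal{B}^{\psi}_k(X)$, it suffices to handle $f = F(\langle\cdot,w_1\rangle,\ldots,\langle\cdot,w_n\rangle)\in\mathcal{A}(X)$ with $F\in\mathrm{C}_b^\infty(\mathbb{R}^n)$. For such $f$, the chain rule yields
\begin{equation*}
\mathcal{L}_V f(z) = \sum_{i=1}^n (\partial_i F\circ\ell_W)(z)\cdot (w_i\circ V)(z),\qquad \ell_W(z) = (\langle z,w_1\rangle,\ldots,\langle z,w_n\rangle).
\end{equation*}
The first factor is in $\mathcal{A}(Z)$ with uniformly bounded $\mathrm{C}^{k-1}$-norm on $Z$. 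For each $w_i\in W\subset X^*$, applying the definition of $V\in\mathcal{C}^{\varphi}_{k-1}(Z;X)$ to $y=w_i$ produces $v_i^m\in\mathcal{A}(Z)$ with $\sup_m\lVert v_i^m\rVert_{\varphi,k-1}\le C_{V,w_i}$ and $v_i^m\to w_i\circ V$ in $\mathrm{C}^{k-1}(B_R(0))$ for every $R>0$. Because $\mathcal{A}(Z)$ is closed under products, the candidate approximants
\begin{equation*}
g^m := \sum_{i=1}^n (\partial_i F\circ\ell_W)\,v_i^m
\end{equation*}
belong to $\mathcal{A}(Z)$.

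The main obstacle is to lift the ball-wise $\mathrm{C}^{k-1}$-convergence of the $v_i^m$ to convergence $g^m\to\mathcal{L}_V f$ in the full $(\hat\psi,k-1)$-norm. I would handle this by a local/tail split along each sublevel set $K_R^{\hat\psi_j}=\{\hat\psi_j\le R\}$, $j=0,\ldots,k-1$. Admissibility forces $K_R^{\hat\psi_j}$ to be weak-$*$ compact and therefore norm-bounded in the dual $Z$, so it lies in some norm-ball $B_{R'}(0)$; the Leibniz expansion of $g^m-\mathcal{L}_V f$, combined with the uniform $\mathrm{C}^{k-1}$-bound on $\partial_i F\circ\ell_W$ and the $\mathrm{C}^{k-1}(B_{R'}(0))$-convergence of $v_i^m$, then forces the local part to vanish as $m\to\infty$. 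The delicate step is the tail on $Z\setminus K_R^{\hat\psi_j}$: pairing the uniform $(\varphi,k-1)$-bound on $v_i^m-w_i\circ V$ with \eqref{eq:vectorfields-phipsirelation}, each tail summand is pointwise dominated by $C\,\varphi_{j-\ell}(z)/\hat\psi_j(z)\le C/\psi_{\ell+1}(z)$, and the task is to show that this quantity tends to zero off $K_R^{\hat\psi_j}$ as $R\to\infty$, uniformly in $m$. This is exactly where D-admissibility of the target weights and the algebraic compatibility \eqref{eq:vectorfields-phipsirelation} enter in an essential way, because they force at least one of the $\psi_{i+1}\varphi_{j-i}$ terms to grow whenever $\hat\psi_j$ does. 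Once the tail estimate is secured, sending $m\to\infty$ and then $R\to\infty$ closes the approximation and places $\mathcal{L}_V f$ in $\mathcal{B}^{\hat\psi}_{k-1}(Z)$.
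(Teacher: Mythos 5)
Your overall strategy coincides with the paper's: reduce to cylindrical $f$ by density and the bilinear bound, write $\mathcal{L}_V f$ as a finite sum of products of bounded cylindrical functions with the scalar functions $w_i\circ V$, approximate the latter using Definition~\ref{def:vectorfield}, and conclude by a local/tail splitting. The bilinearity, the norm estimate via the Leibniz expansion together with \eqref{eq:vectorfields-phipsirelation}, and the local part of the splitting are all correct and match the paper's argument.

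The gap is in the tail estimate, and it is a real one. You argue that on $Z\setminus K_R^{\hat\psi_j}$, i.e.\ where $\hat\psi_j(z)>R$, the bound $C\varphi_{j-\ell}(z)/\hat\psi_j(z)\le C/\psi_{\ell+1}(z)$ becomes small because \eqref{eq:vectorfields-phipsirelation} ``forces at least one of the $\psi_{i+1}\varphi_{j-i}$ terms to grow whenever $\hat\psi_j$ does''. This reads \eqref{eq:vectorfields-phipsirelation} in the wrong direction: it bounds $\hat\psi_j$ from \emph{below} by $\sum_{i}\binom{j}{i}\psi_{i+1}\varphi_{j-i}$, so $\hat\psi_j$ may be large while every $\psi_{i+1}$ stays bounded, and nothing in your tail region makes $\psi_{\ell+1}(z)^{-1}$ small. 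The paper closes this step by splitting along norm balls $B_{R_\varepsilon}(0)$ in $Z$ instead, choosing $R_\varepsilon$ so that the \emph{source} weights satisfy $\psi_{i+1}(z)>\varepsilon^{-1}$ for $\lVert z\rVert_Z>R_\varepsilon$; this rests on the admissibility of the $\psi_{i+1}$ (their sublevel sets are weak-$*$ compact, hence norm bounded) and the continuity of the embedding $Z\to X$, not on any property of the target weights $\hat\psi_j$. Combined with the fact that the cylindrical $f$ and all its derivatives are genuinely bounded --- which is what produces the spare factor $\psi_{i+1}(z)^{-1}$ in the first place --- and the uniform $\lVert\cdot\rVert_{\varphi,k-1}$-bound on the approximants, this yields the tail estimate for $\mathcal{L}_V f$ and for $g^{m}$ simultaneously. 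If you replace your sublevel-set split by this norm-ball split and invoke properness of the $\psi_{i+1}$ rather than of $\hat\psi_j$, your argument becomes the paper's proof.
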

\begin{remark}
	Clearly, it is necessary that $V\in\mathcal{C}^{\varphi}_k(Z;X)$ if $\mathcal{L}_V f\in\mathcal{B}^{\hat{\psi}}_k(Z)$ is supposed to hold for $f\in\mathcal{B}^{\psi}_{k+1}(X)$ for a sufficiently large class of weight functions $\psi$.
	Indeed, choose $\psi_0(x):=\rho(\lVert x\rVert_X)$ with some increasing, left continuous and superlinear function $\rho$, and $\psi_j$ arbitrary D-admissible weight functions on $X$.
	Then, $f:=y\in\mathcal{B}^{\psi}_{k+1}(X)$ for all $y\in X^{*}$.
	Hence, $\mathcal{L}_V f(z)=y(V(z))$, and $y\circ V\in\mathcal{B}^{\hat{\psi}}_k(Z)$ implies that $V\in\mathcal{C}^{\varphi}_k(Z;X)$.
\end{remark}
\begin{proof}
	The claimed boundedness of $\mathcal{L}$ was remarked above, and follows straight away from \eqref{eq:vectorfields-phipsirelation}.

	Hence, we only need to prove that $\mathcal{L}_V f\in\mathcal{B}^{\hat{\psi}}_{k-1}(Z)$ for given $V\in\mathcal{C}^{\varphi}_{k-1}(Z;X)$ and $f=g(\langle\cdot,w_1\rangle,\cdots,\langle\cdot,w_n\rangle)\in\mathcal{A}(X)$; the result then follows from a density argument.
	Fix $\varepsilon>0$.
	We shall construct $g_{\varepsilon}\in\mathcal{A}(Z)$ such that $\lVert \mathcal{L}_V f - g_{\varepsilon} \rVert_{\hat{\psi},k}<C\varepsilon$ with some constant $C>0$ independent of $\varepsilon$.

	Choose a dual set of vectors $(\zeta_i)_{i=1,\ldots,n}\subset Z$ of $(w_i)_{i=1,\ldots,n}$, i.e., $\langle \zeta_i,w_j\rangle = \delta_{ij}$.
	Let $Z_n:=\lspan\left\{ \zeta_i\colon i=1,\cdots,n \right\}$, and define $\pi\colon X\to Z_n$ by $\pi x:=\sum_{i=1}^{n}\langle x,w_i\rangle \zeta_i$.
	Then, $f\circ\pi=f$, and
	\begin{equation}
		\mathcal{L}_V f(z)
		=
		\sum_{i=1}^{n}Df(z)(\zeta_i)\langle V(z), w_i\rangle.
	\end{equation}
	Clearly, $w_i\in X^{*}$, and thus by Definition \ref{def:vectorfield}, there exists $C_V:=\max_{i=1,\dots,n}C_{V,w_i}>0$ such that for all $R>0$, we can find $v^i_{R,\varepsilon}\in\mathcal{A}(Z)$ with $\lVert v^i_{R,\varepsilon}\rVert_{\varphi,k-1}\le C_V$ and
	\begin{equation}
		\lVert w_i\circ V - v^i_{R,\varepsilon}\rVert_{\mathrm{C}^{k-1}(B_R(0))} < \varepsilon,
	\end{equation}
	where $B_R(0)$ denotes the closed unit ball in $Z$.
	Setting $g_{\varepsilon}:=\sum_{i=1}^{n}Df(\cdot)(\zeta_i)v^i_{R,\varepsilon}\in\mathcal{A}(Z)$, it follows that with a constant $C_f>0$ independent of $R>0$,
	\begin{equation}
		\lVert \mathcal{L}_V f - g_{\varepsilon} \rVert_{\mathrm{C}^{k-1}(B_R(0))}
		< C_f\varepsilon.
	\end{equation}
	Choose $R_{\varepsilon}>0$ large enough such that $\psi_j(z)>\varepsilon^{-1}$ for $\lVert z\rVert_{Z}>R_{\varepsilon}$.
	This is possible as the embedding $Z\to X$ is continuous.
	Hence, as $f$ and all its derivatives are bounded,
	\begin{equation}
		\hat{\psi}_j(z)^{-1}\lVert D^j \mathcal{L}_V f(z)\rVert_{L_j(Z)}<C_f\varepsilon
		\quad\text{for $\lVert z\rVert_Z>R_{\varepsilon}$},
		\quad j=0,\dots,k-1,
	\end{equation}
	where $C_f$ is independent of $\varepsilon$.
	Furthermore,
	\begin{equation}
		\hat{\psi}_j(z)^{-1}\lVert D^j g_{\varepsilon}(z)\rVert_{L_j(Z)}
		\le 
		C_{f,V}\varepsilon
		\quad\text{for $\lVert z\rVert_Z>R_{\varepsilon}$},
		\quad j=0,\dots,k-1,
	\end{equation}
	where $C_{f,V}>0$ depends on $f$ and $V$, but not on $\varepsilon$ or $R_{\varepsilon}$.
	Plugging the results together proves the claim.
\end{proof}

Let us consider two special cases.
\begin{corollary}
  \label{cor:boundedvectorfields}
  Let $(H,\lVert\cdot\rVert_{H})$ be a Hilbert space, $(Z,\lVert\cdot\rVert_{Z})$ a continuously embedded Hilbert space.
  Define the D-admissible weight functions $\psi_j(x):=\cosh(\lVert x\rVert_H)$ on $H$ and $\hat{\psi}_j(x):=\cosh(\lVert x\rVert_Z)$ on $Z$ and the C-admissible weight functions $\varphi_j(x):=1$ on $Z$, $j\ge 0$.
  Then, for every $k\ge 0$, the mapping
  \begin{alignat}{2}{}
    \mathcal{L}\colon\mathcal{B}^{\psi}_k(X)\times\mathcal{C}^{\varphi}_{k-1}(Z;X) \to \mathcal{B}^{\hat{\psi}}_{k-1}(Z),
    \quad
    (f,V)\mapsto \mathcal{L}_V f,
  \end{alignat}
  given by $\mathcal{L}_V f(x):=Df(x)V(x)$, is bounded and bilinear.
\end{corollary}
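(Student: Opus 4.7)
The plan is to apply Theorem~\ref{thm:vectorfieldsBxC} directly by verifying its hypotheses for the weights specified in the corollary (reading $X=H$, which appears to be a minor notational lapse in the statement). Bilinearity is immediate from the defining formula $\mathcal{L}_V f(x):=Df(x)V(x)$, so only boundedness requires work, and this reduces to three routine checks: D-admissibility of the $\psi_j$, D-admissibility of the $\hat{\psi}_j$, C-admissibility of the $\varphi_j$, and the compatibility inequality \eqref{eq:vectorfields-phipsirelation}.

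\emph{Admissibility of the weights.} For $\psi_j(x)=\cosh(\lVert x\rVert_H)$ the sublevel set $\{x\in H:\cosh(\lVert x\rVert_H)\le R\}$ is a closed ball in $H$, hence weakly compact by reflexivity; since $H$ is its own predual, weak and weak-$*$ compactness coincide, and admissibility follows. Norm-continuity of $\psi_j$ provides the local boundedness needed to upgrade admissibility to D-admissibility. The same argument applies verbatim to $\hat{\psi}_j(z)=\cosh(\lVert z\rVert_Z)$ on $Z$. The constant weight $\varphi_j\equiv 1$ is trivially bounded below, weak-$*$ lower semicontinuous, and locally bounded, hence C-admissible.

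\emph{Compatibility and conclusion.} The continuous embedding $Z\hookrightarrow H$ provides a constant $c>0$ with $\lVert z\rVert_H\le c\lVert z\rVert_Z$. Replacing $\lVert\cdot\rVert_Z$ by an equivalent norm if necessary (which leaves $\mathcal{B}^{\hat{\psi}}_{k-1}(Z)$ invariant up to equivalence), we may assume $c\le 1$, so that monotonicity of $\cosh$ on $[0,\infty)$ gives $\cosh(\lVert z\rVert_H)\le\cosh(\lVert z\rVert_Z)$. Consequently,
\begin{equation*}
\sum_{i=0}^{j}\binom{j}{i}\psi_{i+1}(z)\varphi_{j-i}(z)=2^{j}\cosh(\lVert z\rVert_H)\le 2^{k-1}\hat{\psi}_j(z)
\end{equation*}
for every $j\in\{0,\dots,k-1\}$, which is \eqref{eq:vectorfields-phipsirelation} with $C=2^{k-1}$. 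Theorem~\ref{thm:vectorfieldsBxC} now yields the corollary. The only step requiring a moment's thought is the normalization of the embedding constant; aside from that, the argument is a mechanical substitution of the chosen weights into the hypotheses of Theorem~\ref{thm:vectorfieldsBxC}.
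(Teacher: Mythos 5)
Your overall route is exactly the paper's: the published proof is a one-line appeal to Theorem~\ref{thm:vectorfieldsBxC}, observing that the right-hand side of \eqref{eq:vectorfields-phipsirelation} collapses to $2^{j}\cosh(\lVert z\rVert_H)$, ``only a multiple'' of $\hat{\psi}_j$. Your admissibility checks are fine. The one place where you go beyond the paper is also the one place where your argument breaks: the claim that replacing $\lVert\cdot\rVert_Z$ by an equivalent norm ``leaves $\mathcal{B}^{\hat{\psi}}_{k-1}(Z)$ invariant up to equivalence'' is false for $\cosh$-weights. If $\lVert\cdot\rVert_Z'=c\lVert\cdot\rVert_Z$ with $c\neq 1$, then $\cosh(\lVert z\rVert_Z')/\cosh(\lVert z\rVert_Z)\sim e^{(c-1)\lVert z\rVert_Z}$ is unbounded (or tends to $0$), so the two weights are not comparable and the weighted spaces genuinely differ -- this is precisely the feature that distinguishes exponential weights from the polynomial weights of Corollary~\ref{cor:Lipschitzvectorfields}, where rescaling really is harmless. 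Your normalization therefore proves the corollary for the rescaled weight $\cosh(\lVert\cdot\rVert_Z')$, not for the $\hat{\psi}_j$ fixed in the statement.

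To be fair, the issue is inherited from the statement itself: if the embedding constant satisfies $\lVert z\rVert_H\le c\lVert z\rVert_Z$ only with $c>1$ (e.g.\ $Z=H$ with the norm halved), then $\cosh(\lVert z\rVert_H)\le C\cosh(\lVert z\rVert_Z)$ fails and so does \eqref{eq:vectorfields-phipsirelation} for these weights; the paper's proof silently assumes the embedding is normalized so that $\lVert z\rVert_H\le\lVert z\rVert_Z$. The honest fix is to state that normalization as a hypothesis (or to take $\hat{\psi}_j(z):=\cosh(c\lVert z\rVert_Z)$), rather than to claim the rescaling is innocuous. With that amendment your computation $\sum_{i=0}^{j}\binom{j}{i}\psi_{i+1}\varphi_{j-i}=2^{j}\cosh(\lVert z\rVert_H)\le 2^{k-1}\hat{\psi}_j(z)$ is correct and the appeal to Theorem~\ref{thm:vectorfieldsBxC} goes through (note also that the theorem requires $k\ge 1$, so ``for every $k\ge 0$'' should be read accordingly).
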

\begin{remark}
  If $Z=H$, this has the simple interpretation that bounded vector fields map $\cosh$-weighted spaces into themselves.
\end{remark}
\begin{proof}
  This is straightforward from Theorem~\ref{thm:vectorfieldsBxC}, as the $\hat{\psi}_j$ defined there is only a multiple of $\hat{\psi}_j$ in this case.
\end{proof}

The following special case is very useful in the analysis of stochastic partial differential equations of Da Prato-Zabczyk type.
\begin{corollary}
  \label{cor:Lipschitzvectorfields}
  Let $(H,\lVert\cdot\rVert_{H})$ be a Hilbert space, $(Z,\lVert\cdot\rVert_{Z})$ a continuously embedded Hilbert space.
  Fix $n\in\mathbb{N}$.
  Define the D-admissible weight functions $\psi_j(x):=(1+\lVert x\rVert_H^2)^{(n-j)/2}$ on $H$ and $\hat{\psi}_j(x):=(1+\lVert x\rVert_Z^{2})^{(n-j)/2}$ on $Z$, $j=0,\dots,n-1$, and the C-admissible weight functions $\varphi_0(x):=(1+\lVert x\rVert_{Z}^2)^{1/2}$ and $\varphi_j(x):=1$ on $Z$, $j\in\mathbb{N}$.
  Then, for $k\le n-1$, the mapping
  \begin{alignat}{2}{}
    \mathcal{L}\colon\mathcal{B}^{\psi}_k(X)\times\mathcal{C}^{\varphi}_{k-1}(Z;X) \to \mathcal{B}^{\hat{\psi}}_{k-1}(Z),
    \quad
    (f,V)\mapsto \mathcal{L}_V f,
  \end{alignat}
  given by $\mathcal{L}_V f(x):=Df(x)V(x)$, is bounded and bilinear.
\end{corollary}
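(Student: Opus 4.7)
The plan is to invoke Theorem~\ref{thm:vectorfieldsBxC} directly, so the task reduces to verifying its two hypotheses for the specific weights in the statement: the correct type of admissibility for each weight, and the growth relation \eqref{eq:vectorfields-phipsirelation}.

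Admissibility is essentially automatic. A Hilbert space $E$ can be identified with its own dual, so closed balls are weak-$*$ compact by Banach--Alaoglu. The functions $(1+\lVert\cdot\rVert_E^2)^{s/2}$ for $s\ge 0$ are continuous and bounded below by $1$, their sublevel sets are closed balls (hence weak-$*$ compact), and every closed $\varepsilon$-ball around a point lies inside some such sublevel set. This yields the D-admissibility of $\psi_j$ on $H$ and of $\hat{\psi}_j$ on $Z$. The C-admissibility of $\varphi_0$ follows from the same continuity and local-boundedness argument, and $\varphi_j\equiv 1$ for $j\ge 1$ is trivially C-admissible.

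The substantive step is to check \eqref{eq:vectorfields-phipsirelation}. The continuous embedding $Z\hookrightarrow H$ supplies a constant $c_0>0$ with $\lVert z\rVert_H\le c_0\lVert z\rVert_Z$, so for every $s\ge 0$ one has $(1+\lVert z\rVert_H^2)^{s/2}\le C_s(1+\lVert z\rVert_Z^2)^{s/2}$. The decisive summand is $i=j$, where
\[
  \psi_{j+1}(z)\varphi_0(z)
  = (1+\lVert z\rVert_H^2)^{(n-j-1)/2}(1+\lVert z\rVert_Z^2)^{1/2}
  \le C(1+\lVert z\rVert_Z^2)^{(n-j)/2}
  = C\hat{\psi}_j(z);
\]
the factor $(1+\lVert z\rVert_Z^2)^{1/2}$ contributed by $\varphi_0$ is precisely what is needed to compensate the half-power lost in passing from $\psi_1$ to $\psi_{j+1}$. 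The remaining summands, in which $\varphi_{j-i}\equiv 1$, are controlled in the same manner after converting the $H$-norm to the $Z$-norm via $c_0$, leaving a finite linear combination of powers of $(1+\lVert z\rVert_Z^2)$ to be compared with $\hat{\psi}_j(z)$.

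Once \eqref{eq:vectorfields-phipsirelation} is established, Theorem~\ref{thm:vectorfieldsBxC} delivers the claimed boundedness and bilinearity of $\mathcal{L}$ from $\mathcal{B}^{\psi}_k(X)\times\mathcal{C}^{\varphi}_{k-1}(Z;X)$ into $\mathcal{B}^{\hat{\psi}}_{k-1}(Z)$. I expect the only genuine obstacle to be the exponent bookkeeping between the $H$-norm and the $Z$-norm across the Leibniz expansion: once one notices that $\varphi_0$ is tailored so as to exactly cover the half-power drop induced by differentiating $V$, everything else is a termwise application of the embedding inequality and the usual combinatorial constants from the Leibniz rule.
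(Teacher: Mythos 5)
Your overall route is the same as the paper's: check admissibility of the weights, verify the growth relation \eqref{eq:vectorfields-phipsirelation}, and invoke Theorem~\ref{thm:vectorfieldsBxC}. The admissibility discussion and the treatment of the $i=j$ summand are fine. The gap is the sentence claiming that ``the remaining summands \dots are controlled in the same manner''. For $0\le i\le j-1$ one has $\varphi_{j-i}\equiv 1$, so the summand in \eqref{eq:vectorfields-phipsirelation} is $\binom{j}{i}\psi_{i+1}(z)=\binom{j}{i}(1+\lVert z\rVert_H^2)^{(n-i-1)/2}$, and the embedding only yields the upper bound $C(1+\lVert z\rVert_Z^2)^{(n-i-1)/2}$. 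Comparing exponents with $\hat{\psi}_j(z)=(1+\lVert z\rVert_Z^2)^{(n-j)/2}$ requires $(n-i-1)/2\le(n-j)/2$, i.e.\ $i\ge j-1$. Hence only the summands $i=j$ and $i=j-1$ are dominated by $\hat{\psi}_j$; for $j\ge 2$ the summands with $i\le j-2$ violate the relation --- already the $i=0$ term $\psi_1(z)=(1+\lVert z\rVert_H^2)^{(n-1)/2}$, which in the case $Z=H$ equals $(1+\lVert z\rVert_Z^2)^{(n-1)/2}$ and is not $O\bigl((1+\lVert z\rVert_Z^2)^{(n-j)/2}\bigr)$. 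These terms come from $D^{i+1}f(z)(\dots,D^{j-i}V(z)(\dots))$ with $j-i\ge 2$: a low-order derivative of $f$, still growing like $\lVert z\rVert^{n-i-1}$, hits a bounded higher derivative of $V$, and nothing compensates the excess power. So the verification genuinely fails for $j\ge 2$, and your argument (which also misidentifies $i=j$ rather than $i=0$ as the decisive summand) only establishes the corollary for $k\le 2$.

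In fairness, the paper's own proof consists of exactly the displayed inequality you are trying to establish and is affected by the same problem: its left-hand side contains $(1+\lVert x\rVert_Z^2)^{(n-i-1)/2}$ for $i=0$, which cannot be bounded by $C\hat{\psi}_j(x)$ once $j\ge 2$. The statement can be repaired either by restricting to $k\le 2$, or by weakening the target weights to $\hat{\psi}_j(x):=(1+\lVert x\rVert_Z^2)^{(n-1)/2}$ for $j\ge 1$, for which \eqref{eq:vectorfields-phipsirelation} does hold since every summand is then dominated by $\psi_1(z)+\psi_{j+1}(z)\varphi_0(z)$. In any case, ``controlled in the same manner'' is precisely the step that cannot be taken for granted here; it is exactly where the exponent bookkeeping you flag as the only obstacle actually breaks down.
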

\begin{remark}
	\label{rem:LipschitzVF}
  This means that linearly bounded vector fields $Z\to X$ with bounded derivatives (hence also Lipschitz continuous) map polynomially bounded functions to polynomially bounded functions, with the same weights.
	In particular, if $A\colon\dom A\subset H\to H$ is a densely defined, closed operator, then $V_A\in\mathcal{C}^{\varphi}_k(\dom A;H)$ for all $k\ge 0$, where $\varphi$ is defined as in Corollary~\ref{cor:Lipschitzvectorfields}, $\dom A$ is endowed with the operator norm, and $V_A(x):=Ax$ for $x\in\dom A$.
\end{remark}
\begin{proof}
  Calculating
  \begin{alignat}{2}{}
    (1+\lVert x\rVert_{Z}^2)^{(n-1)/2}&(1+\lVert x\rVert_{Z}^2)^{1/2}
    +\sum_{i=0}^{j}\binom{j}{i}(1+\lVert x\rVert_{Z}^2)^{(n-i-1)/2} \notag \\
    &\le
    C\hat{\psi}_j(x),
  \end{alignat}
  the claim again follows from an application of Theorem~\ref{thm:vectorfieldsBxC}.
\end{proof}

\section{Stability of cubature schemes}
We shall now prove stability of cubature on Wiener space in the setting of weighted spaces.
Consider from now on the following setup.
Let on $[0,1]$ be given paths $(\omega^{(1)}_i)_{i=1,\dots,N}$, $\omega^{(1)}_i(s)=(\omega^{(1),j}_i(s))_{j=0,\dots,d}$, $\omega^{(1),0}_i(s)=s$, and weights $(\lambda_i)_{i=1,\dots,N}$ of a cubature on Wiener space of order $m\ge 1$ for a $d$-dimensional Brownian motion, i.e., for all multi-indices $(j_1,\dots,j_k)$ with $k+\#\left\{ i\colon j_i=0 \right\}\le m$ and a $d$-dimensional Brownian motion $(B^j_t)_{j=1,\dots,d, t\ge 0}$,
\begin{alignat}{2}{}
	\mathbb{E}&\left[ \idotsint_{0\le s_1\le \dots \le s_k\le 1}\circ\dd B^{j_1}_{s_1}\dots\circ\dd B^{j_k}_{s_k} \right] \\
	&=
	\sum_{i=1}^{N} \lambda_i
	\idotsint_{0\le s_1\le \dots \le s_k\le 1}\dd \omega_i^{(1),j_1}(s_1)\dots\circ\dd \omega_i^{(1),j_k}(s_k). \notag
\end{alignat}
Here, we have set $B^0_t:=t$ and $\circ\dd B^0_t:=\dd t$ for ease of notation.
For a general time interval $[0,\Delta t]$, we set 
\begin{equation}
	\omega^{(\Delta t),0}_i(s):=s
	\quad\text{and}\quad
	\omega^{(\Delta t),j}_i(s):=\sqrt{\Delta t}\omega^{(1),j}_i(s/\Delta t),
	\quad j=1,\dots,d,
\end{equation}
so that $(\omega^{(\Delta t)}_i)_{i=1,\dots,N}$ and $(\lambda_i)_{i=1,\dots,N}$ define a cubature formula on Wiener space of order $m$ on $[0,\Delta t]$.
The approximation of the Markov semigroup $(P_t)_{t\ge 0}$, given by $P_t f(x):=\mathbb{E}[f(X^x_t)]$ for a function $f\colon H\to\mathbb{R}$, where $(X^x_t)_{t\ge 0}$ solves the Stratonovich stochastic differential equation
\begin{equation}
  \dd X^{x}_{t}
  =
  \sum_{j=0}^{d}V_j(X^x_t)\circ\dd B^{j}_{t}, 
  \quad
  X^{x}_0 = x,
\end{equation}
on some state space $H$, then reads
\begin{equation}
	P_t f(x)
	\approx
	Q_{(t/n)}^n f(x),
\end{equation}
where the one step approximation operator is defined by
\begin{equation}
  Q_{(\Delta t)}f(x) := \sum_{i=1}^{N}\lambda_i f(X^{x}_{\Delta t}(\omega_i^{(\Delta t)})),
\end{equation}
with $X^{x}_{t}(\omega_i^{(\Delta t)})$ the solution of the problem
\begin{equation}
  \dd X^{x}_{s}(\omega^{(\Delta t)}_i)
  =
  \sum_{j=0}^{d}V_j(X^x_s(\omega^{(\Delta t)}_i))\dd\omega^{(\Delta t),j}_i(s),
  \quad
  X^{x}_0(\omega^{(\Delta t)}_i) = x.
\end{equation}
Under certain smoothness assumptions on the vector fields $V_j$, $j=0,\dots,d$, and the payoff $f$, we expect that
\begin{equation}
	\lvert P_f f(x) - Q_{(t/n)}^n f(x) \rvert
	\le Cn^{-(m-1)/2},
\end{equation}
where the constant $C>0$ can depend on $f$, $V_j$, $j=0,\dots,d$, and $x\in H$.
For the case $H$ finite-dimensional and $f$ and $V_j$ bounded and $\mathrm{C}^{\infty}$-bounded, $j=0,\dots,d$, it is known that $C$ depends on the supremum norms of $f$ and its derivatives, but not on $x\in H$, see \cite{LyonsVictoir2004}.
For more background on the method, see \cite{LyonsVictoir2004,CrisanGhazali2007,BayerTeichmann2008}.
An alternative approach can be found in \cite{Kusuoka2001,Kusuoka2004}.
Its implementation as a splitting method is given in \cite{NinomiyaVictoir2008}, see also \cite{NinomiyaNinomiya2009,Alfonsi2010,TanakaKohatsuHiga2009}.

Our strategy is as follows.
First, we consider the finite dimensional case.
Here, the analysis is straightforward.
Afterwards, we turn to the infinite dimensional setting.
Here, our aim is to prove stability for Da Prato-Zabczyk equations with pseudodissipative generator.
We prove first the auxiliary result in Theorem~\ref{thm:nonautinfdim-stability}, which might be of independent interest.
The method of the moving frame then yields first Theorem~\ref{thm:infdimgroup}, and the Sz\H{o}kefalvi-Nagy theorem allows us to conclude in Corollary~\ref{cor:stabilitycubaturesemigroupspde}.

\subsection{Finite dimensional state space}
Given a Stratonovich SDE on $\mathbb{R}^n$,
\begin{equation}
  \dd X^{x}_{t}
  =
  \sum_{j=0}^{d}V_j(X^x_t)\circ\dd B^{j}_{t}, 
  \quad
  X^{x}_0 = x,
\end{equation}
we let the local discretisation of $P_t f(x):=\mathbb{E}[f(X^x_t)]$ be defined by
\begin{equation}
  Q_{(\Delta t)}f(x) := \sum_{i=1}^{N}\lambda_i f(X^{x}_{\Delta t}(\omega_i^{(\Delta t)})),
\end{equation}
where $X^{x}_{t}(\omega_i^{(\Delta t)})$ is the solution of the problem
\begin{equation}
  \dd X^{x}_{s}(\omega^{(\Delta t)}_i)
  =
  \sum_{j=0}^{d}V_j(X^x_s(\omega^{(\Delta t)}_i))\dd\omega^{(\Delta t),j}_i(s),
  \quad
  X^{x}_0(\omega^{(\Delta t)}_i) = x.
\end{equation}
\begin{theorem}
  \label{thm:stabilityfinitedimensions}
  Let $\psi$ be an admissible weight function on $\mathbb{R}^{n}$, and assume that 
  \begin{equation}
    \lvert V_i V_j\psi(x) \rvert + \lvert V_i\psi(x) \rvert
    \le C\psi(x)
    \quad\text{for $i=0,\cdots,d$ and $j=1,\cdots,d$},
		\label{eq:stabilityfinitedimensions-assumption}
  \end{equation}
  where we require that all the necessary derivatives are well-defined.

	Then, there exists a constant $\tilde{C}>0$ independent of $\Delta t>0$ such that
  \begin{equation}
    Q_{(\Delta t)}\psi(x)
    \le
		\exp(\tilde{C}\Delta t)\psi(x).
  \end{equation}
\end{theorem}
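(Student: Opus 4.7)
The plan is to Taylor-expand $\psi(X^x_{\Delta t}(\omega_i^{(\Delta t)}))$ to first order in the Stratonovich sense, apply the cubature identity to kill the $\mathcal{O}(\sqrt{\Delta t})$ fluctuation terms, and control the remainder by a pathwise Gronwall argument. Since every cubature path $\omega_i^{(\Delta t)}$ has bounded variation on $[0,\Delta t]$, the approximating process $s\mapsto X^x_s(\omega_i^{(\Delta t)})$ solves an ordinary differential equation, and the classical chain rule gives
\begin{equation}
  \psi(X^x_{\Delta t}(\omega_i^{(\Delta t)}))
  = \psi(x) + \sum_{j=0}^{d}\int_0^{\Delta t} V_j\psi(X^x_r(\omega_i^{(\Delta t)}))\,\dd\omega_i^{(\Delta t),j}(r).
\end{equation}
For $j\ge 1$ I iterate once more, writing $V_j\psi(X^x_r)=V_j\psi(x)+\sum_{k=0}^{d}\int_0^r V_kV_j\psi(X^x_u)\,\dd\omega^{(\Delta t),k}_i(u)$, but for $j=0$ I keep the integral as it stands so that the potentially uncontrolled $V_0V_0\psi$ never appears; this matches the precise hypothesis \eqref{eq:stabilityfinitedimensions-assumption}, which bounds $V_iV_j\psi$ only for $j\ge 1$.

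Multiplying by $\lambda_i$ and summing, the first-order Brownian terms $V_j\psi(x)\sum_{i}\lambda_i\,\omega_i^{(\Delta t),j}(\Delta t)$, $j=1,\dots,d$, vanish by the cubature identity of order $m\ge 1$ applied to the single Stratonovich integrals $\int_0^{\Delta t}\circ\dd B^j_s$, whose expectation is zero. What remains is the drift contribution $\sum_i\lambda_i\int_0^{\Delta t} V_0\psi(X^x_r(\omega_i^{(\Delta t)}))\,\dd r$ together with iterated remainder integrals $\sum_i\lambda_i\int_0^{\Delta t}\!\!\int_0^r V_kV_j\psi(X^x_u)\,\dd\omega_i^{(\Delta t),k}(u)\,\dd\omega_i^{(\Delta t),j}(r)$ for $j\in\{1,\dots,d\}$ and $k\in\{0,\dots,d\}$; all these integrands are pointwise dominated by $C\psi$ by \eqref{eq:stabilityfinitedimensions-assumption}.

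To convert these pointwise bounds into estimates in $\psi(x)$, I apply a one-dimensional Gronwall inequality to the first displayed identity using $\lvert V_j\psi\rvert\le C\psi$ for $j=0,\ldots,d$, which yields
\begin{equation}
  \sup_{0\le r\le\Delta t}\psi(X^x_r(\omega_i^{(\Delta t)}))
  \le \psi(x)\exp\!\Bigl(C\sum_{j=0}^{d}\lVert\omega_i^{(\Delta t),j}\rVert_{\mathrm{TV}([0,\Delta t])}\Bigr),
\end{equation}
and the scalings $\omega_i^{(\Delta t),j}(s)=\sqrt{\Delta t}\,\omega_i^{(1),j}(s/\Delta t)$ for $j\ge 1$ and $\omega_i^{(\Delta t),0}(s)=s$ render this prefactor uniformly bounded for $\Delta t$ in any bounded interval. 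The drift contribution is then bounded by $C\Delta t\,\psi(x)$, and each remainder double integral is bounded by $C\psi(x)\lVert\omega_i^{(\Delta t),j}\rVert_{\mathrm{TV}}\lVert\omega_i^{(\Delta t),k}\rVert_{\mathrm{TV}}=\mathcal{O}(\Delta t)\psi(x)$, since $j\ge 1$ contributes one factor of $\sqrt{\Delta t}$ and the $k$-factor is either $\sqrt{\Delta t}$ ($k\ge 1$) or $\Delta t$ ($k=0$). Summing these estimates gives $Q_{(\Delta t)}\psi(x)\le\psi(x)(1+\tilde C\Delta t)\le\psi(x)\exp(\tilde C\Delta t)$. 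The principal obstacle is the bookkeeping that pins $j=0$ to the outer integrand (avoiding any need to control $V_0V_0\psi$) while ensuring that the $\mathcal{O}(\sqrt{\Delta t})$ contributions are exactly those cancelled by the cubature identity; a crude pathwise bound without this cancellation would produce only $\exp(\tilde C\sqrt{\Delta t})$, which fails to iterate to a uniform bound over $n$ composition steps.
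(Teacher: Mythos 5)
Your proof is correct and follows essentially the same route as the paper's: chain-rule expansion of $\psi$ along the bounded-variation cubature paths, a single further iteration on the diffusion terms only (so that $V_0V_0\psi$ never appears), cancellation of the $\mathcal{O}(\sqrt{\Delta t})$ first-order terms at $s=\Delta t$ via $\sum_{i}\lambda_i\omega_i^{(\Delta t),j}(\Delta t)=0$, and a Gronwall bound on the remainder. The only difference is cosmetic: you run Gronwall pathwise to get an a priori bound on $\sup_{s}\psi(X^x_s(\omega_i^{(\Delta t)}))$ and then estimate the double integrals by total variation, whereas the paper applies Fubini to the double integrals and runs Gronwall on the aggregated quantity $Q_{(\Delta t,s)}\psi(x)$; both yield the same $1+\mathcal{O}(\Delta t)$ bound.
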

\begin{proof}
  We define the intermediate operator
  \begin{equation}
    Q_{(\Delta t,s)}f(x) := \sum_{i=1}^{N}\lambda_i f(X^{x}_{s}(\omega_i^{(\Delta t)}))
    \quad\text{for $s\in[0,t]$}
  \end{equation}
  and note that $Q_{(\Delta t)}=Q_{(\Delta t,\Delta t)}$.
	The definition of the iteration step yields
  \begin{alignat}{2}
    \psi(X^{x}_{s}(\omega^{(\Delta t)}_{i}))
    &=
    \psi(x) + \sum_{j=0}^{d}\int_{0}^{s}V_j\psi(X^{x}_{r}(\omega^{(\Delta t)}_i))\dd\omega^{(\Delta t),j}_{i}(r) \\
    &=
    \psi(x) + \int_{0}^{s}V_0\psi(X^{x}_{r}(\omega^{(\Delta t)}_{i}))\dd r + \sum_{j=1}^{d}V_j\psi(x)\omega^{(\Delta t),j}_i(s) \notag \\
    &\phantom{=}+ \sum_{j=1}^{d}\sum_{k=0}^{d}\int_{0}^{s}\int_{0}^{r}V_k V_j\psi(X^{x}_{q}(\omega^{(\Delta t)}_i))\dd\omega^{(\Delta t),k}_i(q)\dd\omega^{(\Delta t),j}_i(r). \notag
  \end{alignat}
	By \eqref{eq:stabilityfinitedimensions-assumption},
  \begin{alignat}{2}
    \int_{0}^{s}V_0\psi(X^{x}_{r}(\omega^{(\Delta t)}_{i}))\dd r
    \le
    C\int_{0}^{s}\psi(X^{x}_{r}(\omega^{(\Delta t)}_{i}))\dd r.
  \end{alignat}
  Furthermore, as $\lvert\omega^{(\Delta t),j}_{i}(s)\rvert\le C(\Delta t)^{1/2}$ and $\lvert\frac{\partial}{\partial s}\omega^{(\Delta t),j}_{i}(s)\rvert\le C(\Delta t)^{-1/2}$, Fubini's theorem yields
  \begin{alignat}{2}
    \int_{0}^{s}\int_{0}^{r}&V_{k}V_{j}\psi(X^{x}_{q}(\omega^{(\Delta t)}_{i}))\dd\omega^{(\Delta t),k}_i(q)\dd\omega^{(\Delta t),j}_{i}(r) \notag \\
    &\le
    C\int_{0}^{s}\lvert \omega^{(\Delta t),j}_i(s)-\omega^{(\Delta t),j}_i(q) \rvert\psi(X^{x}_{q}(\omega^{(\Delta t)}_i))\left\lvert\frac{\partial}{\partial q}\omega^{(\Delta t),j}_i(q)\right\rvert\dd q \notag \\
    &\le
    C\int_{0}^{s}\psi(X^{x}_{q}(\omega^{(\Delta t)}_i))\dd q.
  \end{alignat}
  Thus, we see that
  \begin{alignat}{2}
    Q_{(\Delta t,s)}\psi(x)
    &=
    \sum_{i=1}^{N}\lambda_i \psi(X^{x}_{s}(\omega^{(\Delta t)}_{i})) \\
    &\le
    \psi(x) + \sum_{j=1}^{d}V_j\psi(x)\sum_{i=1}^{N}\lambda_i\omega^{(\Delta t),j}_i(s) + C\int_{0}^{s}Q_{(\Delta t,r)}\psi(x)\dd r. \notag
  \end{alignat}
  Defining $\alpha_{\Delta t,s}(x):=\sum_{j=1}^{d}V_j\psi(x)\sum_{i=1}^{N}\lambda_i\omega^{(\Delta t),j}_i(s)$, the Gronwall inequality yields
  \begin{equation}
    Q_{(\Delta t,s)}\psi(x)
    \le
    \psi(x) + \alpha_{\Delta t,s}(x) + \int_{0}^{s}\left( \psi(x)+\alpha_{\Delta t,r}(x) \right)C\exp(C(s-r))\dd r.
  \end{equation}
  Note that $\alpha_{\Delta t,\Delta t}(x)=0$ by the equality $\sum_{i=1}^{N}\lambda_i\omega^{(\Delta t),j}_i(\Delta t)=0$.
	Furthermore,
  \begin{equation}
    \alpha_{\Delta t,s}(x)
    \le
    C\sqrt{\Delta t}\psi(x)
    \le
		\frac{C}{2}(1+\Delta t)\psi(x)
		\le
		\frac{C}{2}\exp(\Delta t)\psi(x).
  \end{equation}
  This proves
  \begin{alignat}{2}{}
    Q_{(\Delta t)}\psi(x)
    &=
    Q_{(\Delta t,\Delta t)}\psi(x)
    \le
		\psi(x)\left( 1+\frac{C}{2}\exp(\Delta t)( \exp(C\Delta t)-1 ) \right)
    \notag \\ &
    \le
		\exp(\tilde{C}\Delta t)\psi(x),
  \end{alignat}
	where $\tilde{C}:=\max(C^2/2,C+1)$, which is the required estimate.
\end{proof}

\subsection{Time-dependent stochastic ordinary differential equations on Hilbert space}
Let $H$ be a Hilbert space, and consider the nonautonomous stochastic ordinary differential equation
\begin{equation}
  \label{eq:nonautinfdim}
  \dd X^x_t = \sum_{j=0}^{d}V_j(t,X^x_t)\circ\dd B^j_t,
  \quad
  X^x_0 = x,
\end{equation}
on $H$.
We define cubature approximations of \eqref{eq:nonautinfdim} by
\begin{equation}
  \dd X^{t,x}_s(\omega^{(\Delta t)}_i)
  =
  \sum_{j=0}^{d}V_j(t+s,X^{t,x}_s(\omega^{(\Delta t)}_i))\dd\omega^{(\Delta t),j}_i(s),
  \quad
  X^{t,x}_0 = x,
\end{equation}
and the approximation operator by
\begin{equation}
  Q^{t}_{(\Delta t)}f(x)
  :=
  \sum_{i=1}^{N}\lambda_i f(X^{t,x}_{\Delta t}(\omega^{(\Delta t)}_i)).
\end{equation}
\begin{definition}
  A cubature formula $( \omega^{(\Delta t)}_i, \lambda_i )_{i=1,\dots,N}$ is called \emph{symmetric} if for every $i\in\{1,\dots,N\}$, there exists some $i'\in\left\{ 1,\dots,N \right\}$ such that $\lambda_i=\lambda_{i'}$ and
  \begin{alignat}{2}{}
    \omega^{(\Delta t),j}_i(s)=-\omega^{(\Delta t),j}_{i'}(s)
    \quad\text{for all $s\in[0,\Delta t]$ and $j=1,\dots,d$}.
  \end{alignat}

  It is called \emph{weakly symmetric} if for $j=1,\dots,d$,
  \begin{equation}
    \sum_{i=1}^{N}\lambda_i\omega^{(\Delta t),j}_i(s) = 0
    \quad\text{for $s\in[0,\Delta t]$}.
  \end{equation}
\end{definition}
\begin{remark}
  Clearly, all symmetric cubature formulas are also weakly symmetric.
  Note that many known cubature formulas are actually symmetric.
  Moreover, a non-symmetric cubature formula can be made symmetric by adding the negatives of the paths with the same weights to it and dividing all weights by two.
  This will at most double the number of paths.
  Thus, if we use a cubature formula with a small number of paths in high dimensions, we can also find a symmetric cubature formula with this property.
\end{remark}

\begin{theorem}
  \label{thm:nonautinfdim-stability}
  Suppose that the cubature formula used in the definition of $Q^t_{(\Delta t)}$ is weakly symmetric.
  Let $\psi$ be an admissible weight function on $H$ and suppose %
    \begin{alignat}{2}{} %
    \label{eq:nonautinfdim-psiderivatives1} %
      \lVert D\psi(x)\rVert  &\le C(1+\lVert x\rVert^2)^{-1/2}\psi(x) %
      \quad\text{and}\\ %
    \label{eq:nonautinfdim-psiderivatives2} %
      \lVert D^2\psi(x)\rVert&\le C(1+\lVert x\rVert^2)^{-1}\psi(x) %
    \end{alignat} %
  with some constant $C>0$, 
  Furthermore, assume that for some constant $C>0$ independent of $t$,
  \begin{equation}
    \label{eq:nonautinfdim-Vjlinearlybounded}
    \lVert V_j(t,x)\rVert
    \le
    C( 1+\lVert x\rVert^2 )^{1/2}
    \quad\text{for $j=0,\dots,d$, $x\in X$ and $t\in[0,T]$},
  \end{equation}
  and that $x\mapsto V_j(t,x)$ is continuously differentiable with derivative bounded uniformly in $t\in[0,T]$ for $j=1,\dots,d$.

	Then, there exists a constant $\tilde{C}>0$ such that for all $t\in[0,T]$ and $\Delta t\in[0,T-t]$,
  \begin{equation}
    Q^t_{(\Delta t)}\psi(x)
    \le
		\exp(\tilde{C}t)\psi(x)
    \quad\text{for all $x\in H$}.
  \end{equation}
\end{theorem}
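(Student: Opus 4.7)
The plan is to mimic the one-step expansion used in Theorem~\ref{thm:stabilityfinitedimensions}, but — since time-differentiability of the $V_j$ is \emph{not} assumed, so the usual second-order It\^o–Taylor expansion is unavailable — to expand only once, using the chain rule, and then to extract the cancellation furnished by \emph{weak symmetry} in its pointwise (not just endpoint) form. I expect $\exp(\tilde C\Delta t)$ to be the sharp per-step bound, matching Theorem~\ref{thm:stabilityfinitedimensions}.

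First, I would record an a priori bound on the cubature flow. Because each $V_j(t,\cdot)$ is linearly bounded uniformly in $t$ by \eqref{eq:nonautinfdim-Vjlinearlybounded}, Gronwall applied to the integral equation defining $X^{t,x}_s(\omega_i^{(\Delta t)})$, using $|\omega^{(\Delta t),j}_i(s)|\le C\sqrt{\Delta t}$ and $\int_0^{\Delta t}|\dot\omega^{(\Delta t),j}_i|\,\dd r\le C\sqrt{\Delta t}$, gives
\begin{equation}
\lVert X^{t,x}_r(\omega_i^{(\Delta t)})\rVert\le C(1+\lVert x\rVert),\qquad \lVert X^{t,x}_r(\omega_i^{(\Delta t)})-x\rVert\le C\sqrt{\Delta t}(1+\lVert x\rVert),
\end{equation}
for $r\in[0,\Delta t]$ and $\Delta t$ small. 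Moreover, assumption \eqref{eq:nonautinfdim-psiderivatives1} implies $|\!\grad\log\psi|\le C/\sqrt{1+\lVert\cdot\rVert^2}$, so integrating along the segment from $x$ to $X^{t,x}_r$ yields the auxiliary comparison $\psi(X^{t,x}_r(\omega_i^{(\Delta t)}))\le C\psi(x)$.

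Next I apply the chain rule to $s\mapsto \psi(X^{t,x}_s(\omega_i))$ — legitimate because $\psi$ is $C^2$ under \eqref{eq:nonautinfdim-psiderivatives1}–\eqref{eq:nonautinfdim-psiderivatives2} and $X^{t,x}_\cdot(\omega_i)$ is absolutely continuous — obtaining
\begin{equation}
\psi(X^{t,x}_s(\omega_i))-\psi(x)=\sum_{j=0}^{d}\int_0^s D\psi(X^{t,x}_r(\omega_i))V_j(t+r,X^{t,x}_r(\omega_i))\,\dot\omega^{(\Delta t),j}_i(r)\,\dd r.
\end{equation}
For $j=0$ the integrand is bounded by $C\psi(X^{t,x}_r(\omega_i))$ by \eqref{eq:nonautinfdim-psiderivatives1} and \eqref{eq:nonautinfdim-Vjlinearlybounded}, producing (after summing the $\lambda_i$) a term $C\int_0^s Q^t_{(\Delta t,r)}\psi(x)\,\dd r$. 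For $j=1,\dots,d$ the central trick is the splitting
\begin{equation}
D\psi(X^{t,x}_r)V_j(t+r,X^{t,x}_r)=D\psi(x)V_j(t+r,x)+R^{i,j}(r),
\end{equation}
where $R^{i,j}(r)$ collects the spatial differences. For the first summand, Fubini and weak symmetry (equivalent, for the admissible paths of a cubature formula, to $\sum_i\lambda_i\dot\omega^{(\Delta t),j}_i(r)=0$ a.e.) give
\begin{equation}
\sum_{i=1}^{N}\lambda_i\int_0^s D\psi(x)V_j(t+r,x)\dot\omega^{(\Delta t),j}_i(r)\,\dd r=\int_0^s D\psi(x)V_j(t+r,x)\!\sum_{i=1}^N\lambda_i\dot\omega^{(\Delta t),j}_i(r)\,\dd r=0,
\end{equation}
which is the decisive point where avoiding a Taylor expansion in $t$ buys us stability. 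For $R^{i,j}(r)$, splitting it as $[D\psi(X^{t,x}_r)-D\psi(x)]V_j(t+r,X^{t,x}_r)+D\psi(x)[V_j(t+r,X^{t,x}_r)-V_j(t+r,x)]$ and combining \eqref{eq:nonautinfdim-psiderivatives1}, \eqref{eq:nonautinfdim-psiderivatives2}, the uniform spatial Lipschitz bound on $V_j$, and the a priori estimate of the first paragraph yields $\lVert R^{i,j}(r)\rVert\le C\sqrt{\Delta t}\,\psi(x)$. Integrated against $|\dot\omega^{(\Delta t),j}_i(r)|$ this contributes $C\Delta t\,\psi(x)$ per path.

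Assembling everything produces
\begin{equation}
Q^t_{(\Delta t,s)}\psi(x)\le \psi(x)+C\Delta t\,\psi(x)+C\int_0^s Q^t_{(\Delta t,r)}\psi(x)\,\dd r,
\end{equation}
and Gronwall closes the bound at $s=\Delta t$ in the form $Q^t_{(\Delta t)}\psi(x)\le \exp(\tilde C\Delta t)\psi(x)$, as in the finite-dimensional Theorem~\ref{thm:stabilityfinitedimensions}. The main obstacle is the estimate on $R^{i,j}(r)$: one must arrange the use of \eqref{eq:nonautinfdim-psiderivatives1}–\eqref{eq:nonautinfdim-psiderivatives2} so that the factor $(1+\lVert x\rVert^2)^{\pm 1/2}$ from the weights' derivatives is exactly compensated by the linear-growth factor of $V_j$, leaving only $\sqrt{\Delta t}\,\psi(x)$; and one must justify the auxiliary comparison $\psi(X^{t,x}_r)\le C\psi(x)$ without any additional regularity on $\psi$ beyond what \eqref{eq:nonautinfdim-psiderivatives1} provides. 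The weak-symmetry identity being available at every $s\in[0,\Delta t]$, rather than only at the endpoint, is what makes this single-step (rather than Taylor) argument work in the absence of $t$-smoothness.
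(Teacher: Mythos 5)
Your proof is correct and follows essentially the same route as the paper's: a single chain-rule expansion of $\psi$ along the cubature flow, freezing the first-order term at $(r,x)$ so that weak symmetry (valid at every $s\in[0,\Delta t]$, not just at the endpoint) annihilates it, and a Gronwall closure. The only difference is technical bookkeeping: the paper controls the remainder by expanding $g_j(\rho,\cdot)=D\psi(\cdot)V_j(t+\rho,\cdot)$ once more along the flow and applying Fubini, so that the error stays inside the Gronwall integral as $C\int_0^s\psi(X^{t,x}_q)\,\dd q$, whereas you bound it pointwise by $C\sqrt{\Delta t}\,\psi(x)$ via increment estimates, which obliges you to justify the auxiliary comparison $\psi(X^{t,x}_r)\le C\psi(x)$ (and its analogue along segments) that the paper's arrangement avoids.
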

\begin{remark}
	The above result is remarkable as we do not assume that the vector fields $V_j$ are differentiable with respect to $t$.
	This is also the reason why we cannot simply apply Theorem~\ref{thm:stabilityfinitedimensions} to conclude.
\end{remark}
\begin{proof}
  Define the intermediate approximation for $s\in[0,\Delta t]$ by
  \begin{equation}
    Q^{t}_{(\Delta t,s)}f(x)
    :=
    \sum_{i=1}^{N}\lambda_i f(X^{t,x}_{s}(\omega^{(\Delta t)}_i)).
  \end{equation}
	As in the proof of Theorem~\ref{thm:stabilityfinitedimensions}, we note that $Q^{t}_{(\Delta t,\Delta t)}=Q^{t}_{(\Delta t)}$.
  For $0\le s\le \Delta t$,
  \begin{alignat}{2}{}
    &\psi(X^{t,x}_{s}(\omega^{(\Delta t)}_i))
    =
    \notag \\ & \quad
    \psi(x)
    +
    \sum_{j=0}^{d}\int_{0}^{s}D\psi(X^{t,x}_{r}(\omega^{(\Delta t)}_i))V_j(t+r,X^{t,x}_{r}(\omega^{(\Delta t)}_i))\dd\omega^{(\Delta t),j}_i(r).
  \end{alignat}
  Consider $g_j(r,x):=D\psi(x)V_j(t+r,x)$.
  Then, 
  \begin{alignat}{2}{}
    g_j(\rho,\,& X^{t,x}_{r}(\omega^{(\Delta t)}_i))
    =
    g_j(\rho,x)
    \notag \\ &
    +
    \sum_{k=0}^{d}\int_{0}^{r}D_x g_j(\rho,X^{t,x}_{q}(\omega^{(\Delta t)}_{i}))V_k(t+q,X^{t,x}_{q}(\omega^{(\Delta t)}_i))\dd\omega^{(\Delta t),k}_i(q).
  \end{alignat}
  From \eqref{eq:nonautinfdim-psiderivatives1}, \eqref{eq:nonautinfdim-psiderivatives2} and \eqref{eq:nonautinfdim-Vjlinearlybounded}, we obtain that for $0\le s\le \Delta t\le T$,
  \begin{alignat}{2}{}
    \lvert g_0(r,x)\rvert
    &=
    \lvert D\psi(x)V_0(t+r,x) \rvert
    \le
    C\lVert D\psi(x)\rVert\cdot\lVert V_0(t+r,x)\rVert \notag \\
    &\le
    C\psi(x).
  \end{alignat}
  We argue in a similar manner for $D_x g_j(r,x)V_{k}(t+q,x)$, $j=1,\dots,d$, $k=0,\dots,d$, to obtain that for $0\le q\le r\le \Delta t$,
  \begin{alignat}{2}{}
    \lvert D_x g_j(r,x)V_k(t+q,x) \rvert
    &
    = \lvert D^2\psi(x)( V_j(t+r,x), V_k(t+q,x) ) 
    \notag \\ &\phantom{=\lvert}
    + D\psi(x)D_xV_j(t+r,x)V_k(t+q,x) \rvert \notag \\
    &\le C\psi(x).
  \end{alignat}
  An application of Fubini's theorem just as in the proof of Theorem~\ref{thm:stabilityfinitedimensions} gives
  \begin{alignat}{2}{}
    &\psi(X^{t,x}_s(\omega^{(\Delta t)}_i))
    =
    \psi(x)
    +
    \int_{0}^{s}g_0(r,X^{t,x}_{r}(\omega^{(\Delta t)}_i))\dd r 
    +
    \sum_{j=1}^{d}\int_{0}^{s}g_j(r,x)\dd\omega^{(\Delta t),j}_i(r) \notag \\
    &\phantom{=}+
    \sum_{j=1}^{d}\sum_{k=0}^{d}\int_{0}^{s}\int_{0}^{r}D_x g_j(r,X^{t,x}_{q}(\omega^{(\Delta t)}_i))V_k(t+q,X^{t,x}_{q}(\omega^{(\Delta t)}_{i}))\dd\omega^{(\Delta t),k}_i(q)\dd\omega^{(\Delta t),j}_i(r) \notag \\
    &\le
    \psi(x) + C\int_{0}^{s}\psi(X^{x}_{r}(\omega^{(\Delta t)}_i))\dd r + \sum_{j=1}^{d}\int_{0}^{s}g_{j}(r,x)\dd \omega^{(\Delta t),j}_i(r),
  \end{alignat}
  where we apply that $\Delta t\le T$.
  As from the weak symmetry of the cubature paths,
  \begin{alignat}{2}{}
    \sum_{i=1}^{N}\lambda_i\sum_{j=1}^{d}\int_{0}^{s}g_j(r,x)\dd\omega^{(\Delta t),j}_i(r)
    &=
    \sum_{j=1}^{d}\int_{0}^{s}g_j(r,x)\dd\biggl( \sum_{i=1}^{N}\lambda_i\omega^{(\Delta t),j}_i(r) \biggr) \notag \\
    &=
    0,
  \end{alignat}
  we obtain
  \begin{equation}
    Q_{(\Delta t,s)}\psi(x)
    \le
    \psi(x) + C\int_{0}^{s}Q_{(\Delta t,r)}\psi(x)\dd r.
  \end{equation}
  An application of Gronwall's lemma yields $Q_{(\Delta t)}\psi(x) \le \exp(C\Delta t)\psi(x)$, which proves the result.
\end{proof}
\begin{remark}
  It is clear that the given assumptions on the vector fields and the weight function are not the only ones possible.
  Instead, we could also require the vector fields to be bounded uniformly in $t\in[0,T]$, and allow the weight function to satisfy $\lVert D\psi(x)\rVert + \lVert D^2\psi(x)\rVert\le C\psi(x)$.
  While the situation above corresponds to polynomially growing weight functions and linearly bounded vector fields, this variant corresponds to exponentially growing weight functions and bounded vector fields, see also Corollaries~\ref{cor:Lipschitzvectorfields} and \ref{cor:boundedvectorfields}.

  Such an approach might be more appropriate when dealing with exponentials of stochastic processes such as L\'evy processes, which are ubiquitous in applications in mathematical finance, as they ensure nonnegativity in a simple manner and allow us to work on the natural scale of the problem.
\end{remark}

\subsection{Da Prato-Zabczyk equations}
Suppose now that
\begin{equation}
  \dd X^x_t = AX^x_t\dd t + \sum_{j=0}^{d}V_j(X^x_t)\circ\dd B^j_t,
  \quad
  X^x_0 = x,
\end{equation}
is a stochastic partial differential equation of Da Prato-Zabczyk type on some Hilbert space $H$, see \cite{DaPratoZabczyk1996,DaPratoZabczyk2002} for a comprehensive exposition of the theory of such equations.
Here, solutions are understood in the mild sense,
\begin{equation}
  X^x_t
  =
  \exp(tA)x + \sum_{j=0}^{d}\int_{0}^{t}\exp( (t-s)A )V_j(X^x_t)\circ\dd B^j_s,
\end{equation}
and we also define the cubature discretisations in the mild sense,
\begin{equation}
  X^x_t(\omega^{(\Delta t)}_i)
  =
  \exp(tA)x + \sum_{j=0}^{d}\int_{0}^{t}\exp( (t-s)A )V_j(X^x_t(\omega^{(\Delta t)}_i))\dd\omega^{(\Delta t),j}_i(s).
\end{equation}
Again, the approximation of the Markov semigroup $P_t f(x):=\mathbb{E}[f(X^x_t)]$ is given by
\begin{equation}
  Q_{(\Delta t)}f(x)
  :=
  \sum_{i=1}^{N}\lambda_i f(X^{x}_{\Delta t}(\omega^{(\Delta t)}_i)).
\end{equation}

\begin{theorem}
  \label{thm:infdimgroup}
  Suppose that $A$ is the generator of a group $S_t=\exp(tA)$, $t\in\mathbb{R}$, and that the cubature formula used in the definition of $Q_{(\Delta t)}$ is weakly symmetric.
	Let $\psi$ be an admissible weight function on $H$. 
  With some constant $C>0$, let $\psi(S_tx)\le\exp(Ct)\psi(x)$ for all $x\in H$ and $t>0$, and
  \begin{subequations}
    \label{eq:assumptionpsiderivatives}
    \begin{alignat}{2}{}
      \lVert D\psi(x)\rVert   &\le C(1+\lVert x\rVert^2)^{-1/2}\psi(x)
      \quad\text{and} \\
      \lVert D^2\psi(x)\rVert &\le C(1+\lVert x\rVert^2)^{-1}\psi(x).
    \end{alignat}
  \end{subequations}
  Furthermore, assume that
  \begin{equation}
    \label{eq:Vjlinearlybounded}
    \lVert V_j(x)\rVert
    \le
    C( 1+\lVert x\rVert^2 )^{1/2}
    \quad\text{for $j=0,\dots,d$},
  \end{equation}
  and that $V_j$ is continuously differentiable with bounded derivative for $j=1,\dots,d$.

  Then, for any $T>0$, there exists a constant $C>0$ such that for every $\Delta t\in[0,T]$, the operator $Q_{(\Delta t)}$ satisfies
  \begin{equation}
    Q_{(\Delta t)}\psi(x)
    \le
    \exp(C\Delta t)\psi(x)
    \quad\text{for all $x\in H$}.
  \end{equation}
\end{theorem}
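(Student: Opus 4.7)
The plan is to invoke the \emph{method of the moving frame}, reducing the mild SPDE to a non-autonomous strong stochastic ordinary differential equation on $H$ whose vector fields are rough in time but as regular in space as the original $V_j$, so that Theorem~\ref{thm:nonautinfdim-stability} becomes applicable. Concretely, for each cubature path $\omega$ I set $Y^x_t(\omega) := S_{-t} X^x_t(\omega)$. A direct manipulation of the mild integral equation, using that $(S_t)_{t\in\mathbb{R}}$ is a group, shows that $Y$ solves the pathwise equation
\begin{equation*}
  Y^x_t(\omega) = x + \sum_{j=0}^{d} \int_0^t \tilde V_j(s, Y^x_s(\omega)) \, \dd\omega^{j}_s, \qquad \tilde V_j(s, y) := S_{-s} V_j(S_s y).
\end{equation*}
Because $A$ is unbounded, $\tilde V_j$ is only continuous (not differentiable) in $s$; this is exactly the situation handled by Theorem~\ref{thm:nonautinfdim-stability} and is the reason we must assume the cubature formula to be weakly symmetric.

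I then apply Theorem~\ref{thm:nonautinfdim-stability} with vector fields $\tilde V_j$ and the $\Delta t$-dependent weight $\tilde\psi(y) := \psi(S_{\Delta t} y)$. The hypotheses to check are: (i) $\tilde\psi$ is an admissible weight function, which follows because the sublevel sets of $\tilde\psi$ are the images under the weak-$*$ continuous map $S_{-\Delta t}$ of those of $\psi$; (ii) the derivative bounds \eqref{eq:nonautinfdim-psiderivatives1}--\eqref{eq:nonautinfdim-psiderivatives2} for $\tilde\psi$, which follow by the chain rule from those for $\psi$ combined with the pinching estimate $\|S_{-\Delta t}\|^{-1}\|y\| \le \|S_{\Delta t} y\| \le \|S_{\Delta t}\|\|y\|$; (iii) the uniform linear growth $\|\tilde V_j(s, y)\| \le C_T(1+\|y\|^2)^{1/2}$ for $s \in [0, \Delta t]$; and (iv) the uniform-in-$s$ boundedness of $D_y \tilde V_j = S_{-s} DV_j(S_s \cdot) S_s$ for $j = 1, \dots, d$. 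All four reduce to the hypotheses on $\psi$ and $V_j$ multiplied by powers of $\|S_{\pm s}\|$, and the standard bound $\sup_{|s| \le T} \|S_s\| \le M e^{\omega T}$ for strongly continuous groups makes each such factor uniformly finite on $s \in [0, T]$.

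With these verifications in hand, Theorem~\ref{thm:nonautinfdim-stability} produces a constant $\tilde C$ depending only on $T$ and the structural constants of $\psi$ and $V_j$, but not on $x$ or $\Delta t \in [0, T]$, such that
\begin{equation*}
  \sum_{i=1}^N \lambda_i \, \tilde\psi\bigl(Y^x_{\Delta t}(\omega_i^{(\Delta t)})\bigr) \le \exp(\tilde C \Delta t) \, \tilde\psi(x).
\end{equation*}
Translating back via $X^x_{\Delta t}(\omega_i^{(\Delta t)}) = S_{\Delta t} Y^x_{\Delta t}(\omega_i^{(\Delta t)})$, the left-hand side is exactly $Q_{(\Delta t)} \psi(x)$; on the right, the hypothesis $\psi(S_{\Delta t} x) \le \exp(C \Delta t) \psi(x)$ gives $\tilde\psi(x) \le \exp(C \Delta t) \psi(x)$. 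Combining the two yields the desired bound $Q_{(\Delta t)} \psi(x) \le \exp((\tilde C + C) \Delta t) \psi(x)$.

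The main technical obstacle is the bookkeeping in (i)--(iv): every hypothesis of Theorem~\ref{thm:nonautinfdim-stability} now carries the bounded but $\Delta t$-dependent operators $S_{\pm \Delta t}$, and one must verify that the resulting constants are controlled by $T$ and the original data only, so that the Gronwall constant produced downstream is likewise $\Delta t$-uniform. This is where the restriction $\Delta t \in [0, T]$ enters essentially; it also explains why Theorem~\ref{thm:stabilityfinitedimensions} cannot be applied directly in the moving frame, since the $\tilde V_j$ fail to be differentiable in $s$.
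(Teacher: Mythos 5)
Your proposal is correct and follows essentially the same route as the paper: the method of the moving frame reduces the claim to Theorem~\ref{thm:nonautinfdim-stability} via $\tilde V_j(s,y)=S_{-s}V_j(S_sy)$, and the hypothesis $\psi(S_tx)\le\exp(Ct)\psi(x)$ absorbs the change of frame. The only (harmless) difference is one of ordering: the paper first writes $Q_{(\Delta t)}\psi=\tilde{Q}_{(\Delta t)}(\psi\circ S_{\Delta t})\le\exp(C\Delta t)\tilde{Q}_{(\Delta t)}\psi$ using positivity of $\tilde{Q}_{(\Delta t)}$ and then applies Theorem~\ref{thm:nonautinfdim-stability} to $\psi$ itself, which spares the re-verification of admissibility and of the derivative bounds, uniformly in $\Delta t\in[0,T]$, for the $\Delta t$-dependent weight $\psi\circ S_{\Delta t}$ that your variant carries out.
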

\begin{proof}
  We apply the method of the moving frame from \cite{Teichmann2009}.
  This yields that $X^x_t=S_t Y^x_t$, where $(Y^x_t)_{t\ge 0}$ satisfies the Hilbert space stochastic ordinary differential equation
  \begin{equation}
    \dd Y^x_t = \sum_{j=0}^{d}\tilde{V}_j(t,Y^x_t)\circ\dd B^j_t,
    \quad
    Y^y_0 = y,
  \end{equation}
  with $\tilde{V}_j(t,y)=S_{-t}V_j(S_ty)$.
  Thus, rewriting the cubature discretisations of $(X^{x}_{t})_{t\ge 0}$ using $(Y^{x}_{t})_{t\ge 0}$,
  \begin{equation}
    \dd Y^{x}_s(\omega^{(\Delta t)}_i)
    =
    \sum_{j=0}^{d}\tilde{V}_j(s,Y^{x}_s(\omega^{(\Delta t)}_i))\dd\omega^{(\Delta t),j}_i(s),
  \end{equation}
  we see that, if we define
  \begin{equation}
    \tilde{Q}_{(\Delta t)}f(y) := \sum_{i=1}^{N}\lambda_i f(Y^{y}_{\Delta t}(\omega^{(\Delta t)}_i))
  \end{equation}
  for $f\colon H\to\mathbb{R}$, then $Q_{(\Delta t)}h(x)=\tilde{Q}_{(\Delta t)}g(x)$, where $g(y):=h(S_{\Delta t}y)$.
  In particular,
  \begin{equation}
    Q_{(\Delta t)}\psi(x)
    =
    \tilde{Q}_{(\Delta t)}(\psi\circ S_{\Delta t})(x)
    \le
    \exp(C\Delta t)\tilde{Q}_{(\Delta t)}\psi(x),
  \end{equation}
  where we apply the assumptions on $\psi$ and the positivity of $\tilde{Q}_{(\Delta t)}$.

  But now, we are in the situation of Theorem~\ref{thm:nonautinfdim-stability}:
  The estimates for $\psi$ are clear by assumption, and for $\tilde{V}_j(s,y)$, we note that, as $s\in[0,T]$,
  \begin{equation}
    \lVert \tilde{V}_j(s,y) \rVert
    =
    \lVert S_{-s}V_j(S_s y)\rVert
    \le
    C(1+\lVert x\rVert^{2})^{1/2}
    \quad\text{for $j=0,\dots,d$}
  \end{equation}
  and
  \begin{equation}
    \lVert D_y\tilde{V}_j(s,y)\rVert
    =
    \lVert S_{-s}D_y V_j(S_s y)S_s\rVert
    \le C
    \quad\text{for $j=1,\dots,d$}.
  \end{equation}
  An appeal to Theorem~\ref{thm:nonautinfdim-stability} yields
  \begin{equation}
    \tilde{Q}_{(\Delta t)}\psi(x)
    \le
    \exp(C\Delta t)\psi(x),
  \end{equation}
  and the result follows.
\end{proof}
The Sz\H{o}kefalvi-Nagy theorem now allows us to obtain a corresponding result for pseudocontractive semigroups.
\begin{corollary}
  \label{cor:stabilitycubaturesemigroupspde}
  Suppose that $A$ is the generator of a semigroup of pseudocontractions $S_t=\exp(tA)$, $t\ge 0$.
  Let $\psi(x)=\rho(\lVert x\rVert^2)$ with some increasing and left continuous function $\rho\colon[0,\infty)\to(0,\infty)$ (see also \cite[Example~4.1]{DoersekTeichmann2010}) which satisfies $\rho(Cu)\le C\rho(u)$ for all $u\ge 0$ and $C>0$, and which is twice differentiable and satisfies
  \begin{equation}
    \rho'(u)\le C(1+u)^{-1}\rho(u)
    \quad\text{and}\quad
    \rho''(u)\le C(1+u)^{-2}\rho(u).
  \end{equation}
  Furthermore, assume that $\lVert V_j(x)\rVert \le C( 1+\lVert x\rVert^2 )^{1/2}$ for $j=0,\dots,d$, and that $V_j$ is continuously differentiable with bounded derivative for $j=1,\dots,d$.

  Then, for any $T>0$, there exists a constant $C>0$ such that for every $\Delta t\in[0,T]$, the operator $Q_{(\Delta t)}$ satisfies
  \begin{equation}
    Q_{(\Delta t)}\psi(x)
    \le
    \exp(C\Delta t)\psi(x)
    \quad\text{for all $x\in H$}.
  \end{equation}
\end{corollary}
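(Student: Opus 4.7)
The strategy is to reduce to Theorem~\ref{thm:infdimgroup} via the Sz\H{o}kefalvi-Nagy dilation. Since $(S_t)_{t\ge 0}$ is pseudocontractive, there exists $\omega\ge 0$ such that $\lVert S_t\rVert\le e^{\omega t}$, whence $T_t := e^{-\omega t}S_t$ is a $C_0$-semigroup of contractions on $H$. The Sz\H{o}kefalvi-Nagy theorem then provides a Hilbert space $\tilde H\supset H$ with an isometric embedding $\iota\colon H\to\tilde H$ and orthogonal projection $P\colon\tilde H\to H$, together with a unitary group $(U_t)_{t\in\mathbb{R}}$ on $\tilde H$ such that $T_t = P U_t \iota$. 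Setting $\tilde S_t := e^{\omega t}U_t$ defines a $C_0$-group on $\tilde H$ satisfying $S_t = P\tilde S_t\iota$ and $\lVert\tilde S_t\rVert\le e^{\omega t}$ for $t\ge 0$.

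The next step is to lift all the data to $\tilde H$. Set $\tilde V_j := \iota\circ V_j\circ P$ for $j=0,\dots,d$ and $\tilde\psi(y) := \rho(\lVert y\rVert_{\tilde H}^2)$. The Da Prato--Zabczyk equation on $\tilde H$ with generator of $\tilde S_t$ and vector fields $\tilde V_j$ driven by the same Brownian motion admits a unique mild solution $\tilde X^{\tilde x}_t$. Applying $P$ to its mild integral equation and using the intertwining $P\tilde S_r\iota = S_r$ together with $P\iota = \mathrm{id}_H$, one sees that $P\tilde X^{\iota x}_t$ solves the original mild equation for $X^x_t$, so $X^x_t = P\tilde X^{\iota x}_t$ by uniqueness. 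The same identity holds pathwise for each cubature discretisation, $X^x_t(\omega^{(\Delta t)}_i) = P\tilde X^{\iota x}_t(\omega^{(\Delta t)}_i)$.

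Since $\rho$ is increasing and $P$ is a contraction, $\psi(Py)\le\tilde\psi(y)$; as $\iota$ is an isometry, $\tilde\psi(\iota x) = \psi(x)$. Together with the nonnegativity of the cubature weights, this yields
\begin{equation*}
  Q_{(\Delta t)}\psi(x) \;=\; \sum_{i=1}^N\lambda_i\,\psi\bigl(P\tilde X^{\iota x}_{\Delta t}(\omega^{(\Delta t)}_i)\bigr) \;\le\; \tilde Q_{(\Delta t)}\tilde\psi(\iota x),
\end{equation*}
where $\tilde Q_{(\Delta t)}$ is the cubature operator of the lifted SPDE. Theorem~\ref{thm:infdimgroup} applied on $\tilde H$ then closes the argument, provided its hypotheses hold for $(\tilde S_t, \tilde\psi, \tilde V_j)$. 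These are routine: the bounds on $\lVert D\tilde\psi\rVert$ and $\lVert D^2\tilde\psi\rVert$ follow by the chain rule from the assumed bounds on $\rho'$, $\rho''$; the growth $\tilde\psi(\tilde S_t y)\le e^{2\omega t}\tilde\psi(y)$ follows from $\lVert\tilde S_t y\rVert_{\tilde H}\le e^{\omega t}\lVert y\rVert_{\tilde H}$ and the property $\rho(Cu)\le C\rho(u)$; and the linear growth as well as bounded derivatives of $\tilde V_j$ are inherited from those of $V_j$ since $P$ and $\iota$ are contractions.

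The only non-routine ingredient is the compatibility of the mild formulations on $H$ and $\tilde H$ under $P$. It works precisely because $\tilde V_j$ takes values in the range of $\iota$, which is exactly the subspace on which $P\tilde S_r$ agrees with $S_r P$, so the intertwining can be pulled inside the stochastic convolution. Everything else is bookkeeping within the framework already developed for the group case.
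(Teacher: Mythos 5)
Your proof is correct and follows essentially the same route as the paper: dilate the (pseudo)contraction semigroup to a group via the Sz\H{o}kefalvi-Nagy theorem, lift the vector fields through the orthogonal projection, exploit that $\rho$ is increasing and the projection is a contraction to compare $\psi$ with its lift, and then invoke Theorem~\ref{thm:infdimgroup}. The only differences are cosmetic: you handle the pseudocontractive-to-contractive reduction explicitly by rescaling and you spell out the intertwining of the mild formulations, both of which the paper dispatches with a ``without loss of generality'' and a citation to \cite{Teichmann2009}.
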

\begin{proof}
  Assume without loss of generality that $(S_t)_{t\ge 0}$ is a semigroup of contractions.
  By the Sz\H{o}kefalvi-Nagy theorem \cite[p.~452, Th\'eor\`eme~IV]{RieszSzNagy1955}, we see that we can find a Hilbert space $(\mathcal{H},\lVert\cdot\rVert_{\mathcal{H}})$ containing $H$ as a closed subspace and a strongly continuous group $(\mathcal{S}_t)_{t\in\mathbb{R}}$ of unitary mappings such that $S_t=\pi\mathcal{S}_t$, where $\pi\colon\mathcal{H}\to H$ is the orthogonal projection.

  Define $\psi_{\mathcal{H}}(y):=\rho(\lVert y\rVert_{\mathcal{H}}^2)$ and $V^{\mathcal{H}}_j(y):=V_j(\pi y)$, then it is easy to see that the assumptions of Theorem~\ref{thm:infdimgroup} are satisfied.
  The results of \cite{Teichmann2009} prove that the solution of
  \begin{equation}
    X^{\mathcal{H},y}_t = \mathcal{S}_t y + \sum_{j=0}^{d}\int_{0}^{t}\mathcal{S}_{t-s}V^{\mathcal{H}}_j(X^{\mathcal{H},y}_s)\circ\dd B^{j}_s
  \end{equation}
  satisfies $X^x_t=\pi X^{\mathcal{H},x}_t$, and similarly for the cubature approximations.
  Setting
  \begin{equation}
    Q^{\mathcal{H}}_{(\Delta t)}f(y) := \sum_{i=1}^{N}\lambda_i f(X^{\mathcal{H},y}_{\Delta t}(\omega^{(\Delta t)}_i)),
  \end{equation}
  Theorem~\ref{thm:infdimgroup} yields that $Q^{\mathcal{H}}_{(\Delta t)}\psi_{\mathcal{H}}(y)\le\exp(C\Delta t)\psi_{\mathcal{H}}(y)$, and from $\psi_{\mathcal{H}}(x)=\psi(x)$ for $x\in H$ we obtain that for $x\in H$,
  \begin{alignat}{2}{}
    Q_{(\Delta t)}\psi(x)
    &=
    \sum_{i=1}^{N}\lambda_i\rho(\lVert\pi X^{\mathcal{H},x}_{\Delta t}(\omega^{(\Delta t)}_i)\rVert^2) 
    \le
    \sum_{i=1}^{N}\lambda_i\rho(\lVert X^{\mathcal{H},x}_{\Delta t}(\omega^{(\Delta t)}_i)\rVert_{\mathcal{H}}^2)
    \notag \\&
    =
    Q^{\mathcal{H}}_{(\Delta t)}\psi_{\mathcal{H}}(x) 
    \le
    \exp(C\Delta t)\psi_{\mathcal{H}}(x)
    =
    \exp(C\Delta t)\psi(x).
  \end{alignat}
  The result is thus proved.
\end{proof}

\section{Convergence estimates of cubature schemes}
We are now ready to prove rates of convergence for cubature on Wiener space on weighted spaces.
We shall only prove these results in the infinite-dimensional setting; corresponding results in finite dimensions are obtained in a similar manner.

Let $H$ be a Hilbert space and $A$ the infinitesimal generator of a strongly continuous semigroup of pseudocontractions on $H$.
Fix $\ell_0\in\mathbb{N}$.
For $\ell=0,\dots,\ell_0$, let $H_{\ell}$ be subspaces of $H$ endowed with Hilbert norms $\lVert\cdot\rVert_{H_{\ell}}$, $H_0=H$, such that for $\ell=0,\dots,\ell_0-1$, $H_{\ell+1}\subset H_{\ell}$ and $A\colon H_{\ell+1}\to H_{\ell}$ is a bounded linear operator.
On $H_{\ell}$, we define D-admissible weight functions
\begin{equation}
  \psi_{\ell}^{s}(x) := \left( 1+\lVert x\rVert_{H_{\ell}}^2 \right)^{s/2},
  \quad s\ge 1, \quad \ell=0,\dots,\ell_0,
  \quad
  \psi^{s}:=\psi_{0}^{s},
\end{equation}
and the functions
\begin{equation}
  \varphi_{\ell,0}(x) := \left( 1+\lVert x\rVert_{H_{\ell}}^2 \right)^{1/2},
  \quad
  \varphi_{\ell,j}(x) := 1, \quad j\ge 1.
\end{equation}
Define the vectors of weight functions $\psi_{\ell}^{(n)}:=(\psi_{\ell}^{n-j})_{j=0,\dots,k}$, $k<n$, and $\varphi_{\ell}:=(\varphi_{\ell,j})_{j=0,\dots,k}$.
\begin{assumption}
	The vector fields satisfy
	\begin{alignat}{2}{}
		V_j\in\mathcal{C}^{\varphi_{\ell}}_k( H_{\ell}, H_\ell ) \quad\text{for $j=0,\dots,d$ and $\ell=0,\dots,\ell_0$}.
	\end{alignat}
\end{assumption}
Remark~\ref{rem:LipschitzVF} shows that $A\in\mathcal{C}^{\varphi_{\ell+1}}_k(H_{\ell+1},H_{\ell})$ for $\ell=0,\dots,\ell_0-1$.
For $x\in H_{\ell}$, $\ell=0,\dots,\ell_0$, we can then consider the Da Prato-Zabczyk equation
\begin{equation}
  \label{eq:stochTaylor-dpzeq}
  \dd X^x_t = AX^x_t\dd t + \sum_{j=0}^{d}V_j(X^x_t)\circ\dd B^j_t,
  \quad
  X^x_0 = x,
\end{equation}
on $H_{\ell}$.
As the assumptions on the vector fields $V_j$ essentially mean that they are Lipschitz continuous with bounded derivatives, all these equations have unique solutions, agreeing if we vary $\ell$ for sufficiently smooth initial conditions.

\begin{assumption}
	The Markov semigroup $(P_t)_{t\ge 0}$, $P_t f(x):=\mathbb{E}[f(X^x_t)]$, is strongly continuous on $\mathcal{B}^{\psi_{\ell}^{n}}( H_{\ell} )$ for all $n\in\mathbb{N}$ and $\ell=0,\dots,\ell_0$.
	For some $k_0\in\mathbb{N}$, $P_t$ is a bounded map from $\mathcal{B}^{\psi_{\ell}^{(n)}}_k( H_{\ell} )$ into itself for $k=0,\dots,k_0$ and $n\in\mathbb{N}$, $n>k$, with norm bounded uniformly in $t\in[0,T]$ for every $T>0$.
\end{assumption}
See also \cite[Section~5, Lemma~7.19]{DoersekTeichmann2010} for sufficient conditions for these assumptions.
\subsection{Taylor expansion of stochastic partial differential equations}
\begin{theorem}
  \label{thm:representationG1}
  Let $\ell=1,\dots,\ell_0$.
  Consider the strongly continuous semigroup $(P_t)_{t\ge 0}$ on the space $\mathcal{B}^{\psi_{\ell}^{n}}( H_{\ell} )$ with $n\ge 4$.
  Denote its generator by $(\mathcal{G},\dom\mathcal{G})$.

  Then, $\mathcal{B}^{\psi_{\ell-1}^{(n)}}_2( H_{\ell-1} )\subset\dom\mathcal{G}$, and
  \begin{alignat}{2}{}
    \label{eq:representationG}
    \mathcal{G}f(x)
    =
		Df(x)(Ax) &+ \mathcal{L}_{V_0}f(x) + \frac{1}{2}\sum_{j=1}^{d}\mathcal{L}_{V_j}^2 f(x) \\
    &\quad\text{for $f\in\mathcal{B}^{\psi_{\ell-1}^{(n)}}_2( H_{\ell-1} )$ and $x\in H_{\ell}$}. \notag 
  \end{alignat}
\end{theorem}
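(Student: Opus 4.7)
The plan is to proceed in three steps: (i) verify that the right-hand side of \eqref{eq:representationG} defines an element $\mathcal{G}f\in\mathcal{B}^{\psi_\ell^n}(H_\ell)$; (ii) derive the integral identity
\begin{equation*}
P_tf(x)-f(x)=\int_0^t P_s(\mathcal{G}f)(x)\,\dd s
\end{equation*}
by Itô's formula applied to $f(X^x_t)$ followed by taking expectations; (iii) divide by $t$ and let $t\to 0+$ in the $\psi_\ell^n$-norm, using strong continuity of $(P_s)_{s\ge 0}$ on $\mathcal{B}^{\psi_\ell^n}(H_\ell)$. For step (i), the drift term $x\mapsto Df(x)(Ax)$ is the Lie derivative of $f$ along $V_A(x):=Ax$. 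By Remark~\ref{rem:LipschitzVF}, $V_A\in\mathcal{C}^{\varphi_\ell}_k(H_\ell;H_{\ell-1})$, so Theorem~\ref{thm:vectorfieldsBxC}, applied with the D-admissible weights $\psi_{\ell-1}^{(n)}$ on $X=H_{\ell-1}$ and $\psi_\ell^{(n-1)}$ on $Z=H_\ell$, places $\mathcal{L}_{V_A}f$ in $\mathcal{B}^{\psi_\ell^{n-1}}_1(H_\ell)$, which embeds continuously into $\mathcal{B}^{\psi_\ell^n}(H_\ell)$. The summands $\mathcal{L}_{V_0}f$ and $\tfrac12\mathcal{L}_{V_j}^2f$ are handled by one, respectively two, applications of Theorem~\ref{thm:vectorfieldsBxC} using $V_j\in\mathcal{C}^{\varphi_\ell}_k(H_\ell;H_\ell)$; each application consumes one order of differentiability and one power of polynomial weight, which is exactly what the hypothesis $f\in\mathcal{B}^{\psi_{\ell-1}^{(n)}}_2(H_{\ell-1})$ affords.

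For step (ii), I apply Itô's formula to $f(X^x_t)$ for $x\in H_\ell$. Since $A$ is unbounded, the classical Itô formula does not apply to the mild solution directly; the standard remedy is to replace $A$ by its Yosida approximation $A_\lambda:=A(I-\lambda A)^{-1}$, solve the corresponding strong SPDE (whose generator is bounded), apply the classical Itô formula, and then pass to the limit $\lambda\to 0$ using $f\in\mathrm{C}^2(H_{\ell-1})$ and the $\mathcal{C}^{\varphi_\ell}_k$-bounds on $V_j$ and $A$ to control each term. Alternatively, one may use the moving-frame reformulation from the proof of Theorem~\ref{thm:infdimgroup} to reduce to a time-dependent SDE on a Hilbert space where the usual Itô formula applies directly. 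The Stratonovich-to-Itô conversion produces the correction $\tfrac12\sum_{j=1}^{d}\mathcal{L}_{V_j}^2f(X^x_s)\,\dd s$; together with the drift $Df(X^x_s)(AX^x_s)\,\dd s$, which is well-defined pointwise because $Df(X^x_s)\in H_{\ell-1}^*$ and $AX^x_s\in H_{\ell-1}$ for $X^x_s\in H_\ell$, and the Itô contribution $\mathcal{L}_{V_0}f(X^x_s)\,\dd s$, taking expectations annihilates the Brownian martingale terms and yields the displayed identity.

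Step (iii) is then essentially immediate: since $\mathcal{G}f\in\mathcal{B}^{\psi_\ell^n}(H_\ell)$ by step (i) and $(P_s)_{s\ge 0}$ is strongly continuous on this space, the Bochner integral satisfies $\tfrac1t\int_0^t P_s(\mathcal{G}f)\,\dd s\to \mathcal{G}f$ in $\psi_\ell^n$-norm as $t\to 0+$, so that $\tfrac1t(P_tf-f)\to\mathcal{G}f$ in the same norm, which identifies $f\in\dom\mathcal{G}$ and gives the formula. I expect the main obstacle to be the rigorous execution of step (ii): the Yosida-limit argument requires uniform-in-$\lambda$ growth estimates on $X^{x,\lambda}_s$ and its approximated characteristics in the relevant weighted norms in order to pass to the limit under the expectation, and these must be extracted from the $\mathcal{C}^{\varphi_\ell}_k$-structure of the $V_j$ together with Remark~\ref{rem:LipschitzVF} and the polynomial-growth estimates on $f$, $Df$, $D^2f$ built into $\mathcal{B}^{\psi_{\ell-1}^{(n)}}_2(H_{\ell-1})$. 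It is precisely here that the refined analysis of vector fields developed in the previous section plays its crucial role.
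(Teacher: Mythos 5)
Your overall strategy is viable, but it takes a genuinely different and considerably more laborious route than the paper, and its hardest step is left unexecuted. The paper's proof is three lines: apply the D{a} Prato--Zabczyk It\^o formula \emph{only} to cylindrical functions $f\in\mathcal{A}(H_{\ell-1})$ (where it holds without any approximation argument), observe via Corollary~\ref{cor:Lipschitzvectorfields} that the right-hand side of \eqref{eq:representationG} is a bounded linear operator $\mathcal{B}^{\psi_{\ell-1}^{(n)}}_{2}(H_{\ell-1})\to\mathcal{B}^{\psi_{\ell}^{n}}(H_{\ell})$, and then invoke the \emph{closedness of the generator}: since $\mathcal{B}^{\psi_{\ell-1}^{(n)}}_{2}(H_{\ell-1})$ is by definition the closure of $\mathcal{A}(H_{\ell-1})$ in the $\lVert\cdot\rVert_{\psi_{\ell-1}^{(n)},2}$-norm, any $f$ in it is a limit of cylindrical $f_m$ with $f_m\to f$ and $\mathcal{G}f_m\to Rf$ in the respective weighted norms, whence $f\in\dom\mathcal{G}$ and $\mathcal{G}f=Rf$. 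Your step~(i) is essentially the paper's continuity observation (you route it through Theorem~\ref{thm:vectorfieldsBxC} rather than its Corollary~\ref{cor:Lipschitzvectorfields}, which is fine), and your step~(iii) is the standard generator characterization. What you buy with your approach, if completed, is a direct integral identity $P_tf-f=\int_0^tP_s\mathcal{G}f\,\dd s$ for every $f$ in the class; what you pay is step~(ii).

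Step~(ii) is where your proposal is incomplete rather than wrong: you propose a Yosida-approximation (or moving-frame) argument to justify It\^o's formula for a general $f\in\mathcal{B}^{\psi_{\ell-1}^{(n)}}_2(H_{\ell-1})$ applied to a mild solution, and you yourself flag that the required uniform-in-$\lambda$ weighted estimates are the main obstacle. This is precisely the technical difficulty that the closedness argument circumvents entirely: once the formula is known on the dense subspace $\mathcal{A}(H_{\ell-1})$ and the candidate generator is continuous between the relevant weighted spaces, no It\^o formula for rough $f$ is ever needed. I would encourage you to notice that the graded structure $A\colon H_{\ell+1}\to H_{\ell}$ bounded, together with Remark~\ref{rem:LipschitzVF}, is set up in the paper exactly so that this density-plus-closedness argument closes; replacing your step~(ii) by it turns your sketch into a complete proof.
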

\begin{proof}
  By the It\^o formula \cite[Theorem~7.2.1]{DaPratoZabczyk2002}, it follows that for $f\in\mathcal{A}( H_{\ell-1} )$, we have $f\in\dom\mathcal{G}$ and $f$ satisfies \eqref{eq:representationG}.
  Corollary~\ref{cor:Lipschitzvectorfields} shows that the right hand side of \eqref{eq:representationG} is a continuous linear operator $\mathcal{B}^{\psi_{\ell-1}^{(n)}}_{2}( H_{\ell-1} )\to\mathcal{B}^{\psi_{\ell}^{n}}( H_{\ell} )$.
  The closedness of $\mathcal{G}$ proves the claim.
\end{proof}
The next result follows directly from Corollary~\ref{cor:Lipschitzvectorfields}, together with the explicit representation in \eqref{eq:representationG}.
\begin{corollary}
  Let $k\ge 0$.
  Under the assumptions of Theorem~\ref{thm:representationG1}, the infinitesimal generator $\mathcal{G}$ satisfies the mapping property
  \begin{equation}
    \mathcal{G}\colon\mathcal{B}^{\psi_{\ell-1}^{(n)}}_{k+2}( H_{\ell-1} ) \to \mathcal{B}^{\psi_{\ell}^{(n)}}_{k}( H_{\ell} ),
    \quad
    \ell=1,\dots,\ell_0.
  \end{equation}
\end{corollary}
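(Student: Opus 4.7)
The plan is to substitute the explicit representation~\eqref{eq:representationG} from Theorem~\ref{thm:representationG1},
\[
  \mathcal{G}f(x) = Df(x)(Ax) + \mathcal{L}_{V_0}f(x) + \tfrac{1}{2}\sum_{j=1}^{d}\mathcal{L}_{V_j}^{2}f(x),
\]
and bound each summand by repeated application of Corollary~\ref{cor:Lipschitzvectorfields}, losing two derivatives in total across the three terms. A final restriction step then passes from $H_{\ell-1}$ to $H_{\ell}$.

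For the unbounded drift, I write $Df(x)(Ax) = \mathcal{L}_{V_A} f(x)$ with $V_A(x) := Ax$. By Remark~\ref{rem:LipschitzVF}, $V_A \in \mathcal{C}^{\varphi_{\ell}}_{k+1}(H_{\ell}; H_{\ell-1})$, and Corollary~\ref{cor:Lipschitzvectorfields} applied with codomain Hilbert space $H_{\ell-1}$ and continuously embedded Hilbert space $Z = H_{\ell}$ gives that $f \mapsto \mathcal{L}_{V_A} f$ is bounded from $\mathcal{B}^{\psi_{\ell-1}^{(n)}}_{k+2}(H_{\ell-1})$ into $\mathcal{B}^{\psi_{\ell}^{(n)}}_{k+1}(H_{\ell})$, which continuously embeds into $\mathcal{B}^{\psi_{\ell}^{(n)}}_{k}(H_{\ell})$.

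For the remaining terms, the blanket assumption on the vector fields furnishes $V_j \in \mathcal{C}^{\varphi_{\ell-1}}_{k+1}(H_{\ell-1}; H_{\ell-1})$ for $j = 0, 1, \ldots, d$. A first application of Corollary~\ref{cor:Lipschitzvectorfields} with $H = Z = H_{\ell-1}$ produces $\mathcal{L}_{V_j} f \in \mathcal{B}^{\psi_{\ell-1}^{(n)}}_{k+1}(H_{\ell-1})$, and a second application (for $j \geq 1$) yields $\mathcal{L}_{V_j}^{2} f \in \mathcal{B}^{\psi_{\ell-1}^{(n)}}_{k}(H_{\ell-1})$. It remains to restrict from $H_{\ell-1}$ to $H_{\ell}$: since $H_{\ell} \hookrightarrow H_{\ell-1}$ continuously, we have $\psi_{\ell-1}^{n-j}(x) \leq C\, \psi_{\ell}^{n-j}(x)$ for $x \in H_{\ell}$; Fréchet derivatives restrict with bounded norm to multilinear forms on $H_{\ell}^{j}$; and every cylindrical function in $\mathcal{A}(H_{\ell-1})$ restricts to a cylindrical function in $\mathcal{A}(H_{\ell})$ via the Riesz identification of $H_{\ell-1}$ with its predual. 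Hence restriction defines a bounded map $\mathcal{B}^{\psi_{\ell-1}^{(n)}}_{k}(H_{\ell-1}) \to \mathcal{B}^{\psi_{\ell}^{(n)}}_{k}(H_{\ell})$, and assembling the three contributions proves the claim.

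The main obstacle, such as it is, lies in verifying the last restriction step: one must confirm that $f|_{H_{\ell}}$ stays in $\mathcal{B}$ and not merely in $\mathrm{B}$. This reduces to the observation that $\mathcal{A}(H_{\ell-1})|_{H_{\ell}} \subset \mathcal{A}(H_{\ell})$, so that the approximation by cylindrical functions is preserved under restriction. Beyond this, the proof is pure derivative-loss bookkeeping already encoded in Corollary~\ref{cor:Lipschitzvectorfields}.
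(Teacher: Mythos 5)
Your proof is correct and follows essentially the same route as the paper, which simply notes that the corollary ``follows directly from Corollary~\ref{cor:Lipschitzvectorfields}, together with the explicit representation in \eqref{eq:representationG}''; your version merely makes explicit the derivative-loss bookkeeping and the restriction from $H_{\ell-1}$ to $H_{\ell}$ that the paper leaves implicit.
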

Induction now yields:
\begin{corollary}
  Let $j=\ell,\dots,\ell_0$.
  Under the assumptions of Theorem~\ref{thm:representationG1}, the powers of the infinitesimal generator $\mathcal{G}$ satisfy
  \begin{equation}
    \mathcal{G}^j\colon\mathcal{B}^{\psi_{\ell-j}^{(n)}}_{k+2j}( H_{\ell-j} ) \to \mathcal{B}^{\psi_{\ell}^{(n)}}_{k}( H_{\ell} ).
  \end{equation}
  They are given explicitly by taking the powers of \eqref{eq:representationG}.
\end{corollary}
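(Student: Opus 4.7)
The claim follows by induction on $j$, with the single-step mapping property of the previous corollary as the base case. The hardest part is just keeping the double bookkeeping on the Hilbert-scale index $\ell$ and the differentiability index $k$ straight as we iterate.

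For the base case $j=1$, the previous corollary directly gives $\mathcal{G}\colon\mathcal{B}^{\psi_{\ell-1}^{(n)}}_{k+2}(H_{\ell-1}) \to \mathcal{B}^{\psi_{\ell}^{(n)}}_{k}(H_{\ell})$, which is the statement at $j=1$. For the inductive step, assume the claim holds at step $j-1$, that is,
\begin{equation}
	\mathcal{G}^{j-1}\colon\mathcal{B}^{\psi_{\ell-(j-1)}^{(n)}}_{k+2(j-1)}(H_{\ell-(j-1)}) \to \mathcal{B}^{\psi_{\ell}^{(n)}}_{k}(H_{\ell}).
\end{equation}
Apply the previous corollary once more, but with the parameters shifted to $\ell\mapsto\ell-(j-1)$ and $k\mapsto k+2(j-1)$, which yields
\begin{equation}
	\mathcal{G}\colon\mathcal{B}^{\psi_{\ell-j}^{(n)}}_{k+2j}(H_{\ell-j}) \to \mathcal{B}^{\psi_{\ell-(j-1)}^{(n)}}_{k+2(j-1)}(H_{\ell-(j-1)}).
\end{equation}
Composing this with the inductive hypothesis produces $\mathcal{G}^j=\mathcal{G}^{j-1}\circ\mathcal{G}$ as a bounded operator between the desired spaces.

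The explicit formula is obtained by substituting the representation \eqref{eq:representationG} into itself $j$ times, which amounts to expanding
\begin{equation}
	\mathcal{G}^j f = \Bigl( D(\cdot)(A\cdot) + \mathcal{L}_{V_0} + \tfrac{1}{2}\sum_{i=1}^{d}\mathcal{L}_{V_i}^2 \Bigr)^j f,
\end{equation}
where each application of Theorem~\ref{thm:representationG1} is legal because at the $i$-th stage we are applying $\mathcal{G}$ to a function in $\mathcal{B}^{\psi_{\ell-i}^{(n)}}_{k+2i}(H_{\ell-i})\subset\mathcal{B}^{\psi_{\ell-i}^{(n)}}_{2}(H_{\ell-i})$, which sits inside $\dom\mathcal{G}$ by Theorem~\ref{thm:representationG1}.

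The only points requiring care are (i) checking that $n\ge 4$ as required by Theorem~\ref{thm:representationG1} is preserved at each step (which is trivial since $n$ does not change under the recursion), and (ii) verifying that the indices $\ell-i$ remain in the admissible range $\{0,\dots,\ell_0\}$ throughout, which is ensured by the standing restriction $j\le\ell$ (so that $\ell-j\ge 0$). No genuine obstacle arises; the result is really a consequence of composing bounded linear maps along the natural Hilbert scale $H_{\ell_0}\subset\dots\subset H_0$ and differentiability scale furnished by the two previous corollaries.
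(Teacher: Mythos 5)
Your proof is correct and follows exactly the route the paper intends: the paper's entire justification is the phrase ``Induction now yields,'' relying on the single-step mapping property $\mathcal{G}\colon\mathcal{B}^{\psi_{\ell-1}^{(n)}}_{k+2}(H_{\ell-1})\to\mathcal{B}^{\psi_{\ell}^{(n)}}_{k}(H_{\ell})$ of the preceding corollary, and you have simply written out that induction with the correct index bookkeeping. No discrepancy.
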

This allows us to obtain a Taylor expansion of $P_t f$ for smooth enough $f$, which we will compare to the Taylor expansion of cubature approximations.
\begin{corollary}
  \label{cor:localexpansionspde}
  Let $f\in\mathcal{B}^{\psi_{\ell-(k+1)}^{(n)}}_{2(k+1)}( H_{\ell-(k+1)} )$, $k+1\le\ell\le\ell_0$, $n\ge 2(k+2)$.

  Then,
  \begin{equation}
    P_t f = \sum_{j=0}^{k}\frac{t^j}{j!}\mathcal{G}^j f + t^{k+1}R_{t,k}f,
  \end{equation}
  where the linear operator $R_{t,k}\colon\mathcal{B}^{\psi_{\ell-(k+1)}^{(n)}}_{2(k+1)}( H_{\ell-(k+1)} )\to\mathcal{B}^{\psi_{\ell}^{n}}( H_{\ell} )$ satisfies
  \begin{equation}
    \lVert R_{t,k}f\rVert_{\psi_{\ell}^{n}}
    \le
    C_T \lVert f\rVert_{\psi_{\ell-(k+1)}^{(n)},2(k+1)}
    \quad\text{for $t\in[0,T]$}
  \end{equation}
  for a constant $C_T>0$ independent of $f$.
\end{corollary}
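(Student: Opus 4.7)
The plan is to derive the stated expansion from the standard Taylor formula with integral remainder for a $\mathrm{C}_0$-semigroup, and then to read off the mapping properties of the remainder from the iterated corollary on the powers of $\mathcal{G}$.

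The starting point is the identity
\begin{equation*}
	P_t f = \sum_{j=0}^{k}\frac{t^j}{j!}\mathcal{G}^j f + \int_{0}^{t}\frac{(t-s)^k}{k!}P_s\mathcal{G}^{k+1}f\,\dd s,
\end{equation*}
valid whenever $f\in\dom(\mathcal{G}^{k+1})$. I would prove this by induction on $k$: the base case $k=0$ is the standard formula $P_t f - f = \int_0^t P_s\mathcal{G}f\,\dd s$ for $f\in\dom\mathcal{G}$, and the inductive step follows by integration by parts in $s$ applied to $\frac{(t-s)^k}{k!}P_s\mathcal{G}^{k+1}f$, using $\frac{\dd}{\dd s}P_s\mathcal{G}^{k+1}f = P_s\mathcal{G}^{k+2}f$ on $\dom(\mathcal{G}^{k+2})$.

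Next I would verify that the hypothesis $f\in\mathcal{B}^{\psi_{\ell-(k+1)}^{(n)}}_{2(k+1)}(H_{\ell-(k+1)})$ implies $f\in\dom(\mathcal{G}^{k+1})$ at each of the relevant levels. This is where the iterated version of the previous corollary enters: successive applications yield
\begin{equation*}
	\mathcal{G}^j\colon \mathcal{B}^{\psi_{\ell-(k+1)}^{(n)}}_{2(k+1)}(H_{\ell-(k+1)}) \to \mathcal{B}^{\psi_{\ell-(k+1)+j}^{(n)}}_{2(k+1-j)}(H_{\ell-(k+1)+j}),
	\quad j=0,\dots,k+1,
\end{equation*}
so each intermediate $\mathcal{G}^j f$ retains enough smoothness (at least two derivatives when $j\le k$) to lie in $\dom\mathcal{G}$ via Theorem~\ref{thm:representationG1}. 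Taking $j=k+1$ in particular gives a continuous map into $\mathcal{B}^{\psi_{\ell}^{n}}(H_{\ell})$, which is the target space of interest.

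Finally I would set
\begin{equation*}
	R_{t,k}f := \int_{0}^{1}\frac{(1-u)^k}{k!}P_{tu}\mathcal{G}^{k+1}f\,\dd u
\end{equation*}
after substituting $s=tu$; this makes the $t^{k+1}$-prefactor explicit. Linearity is clear from the linearity of $P_s$ and $\mathcal{G}^{k+1}$. For the norm bound, combine the uniform bound $\sup_{s\in[0,T]}\lVert P_s\rVert_{L(\mathcal{B}^{\psi_{\ell}^{n}}(H_\ell))}<\infty$ from the standing assumption on $(P_t)_{t\ge 0}$ with the continuity of $\mathcal{G}^{k+1}$ between the two $\mathcal{B}$-spaces just displayed. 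The resulting estimate is
\begin{equation*}
	\lVert R_{t,k}f\rVert_{\psi_{\ell}^{n}}
	\le
	\int_{0}^{1}\frac{(1-u)^k}{k!}\lVert P_{tu}\rVert\,\lVert \mathcal{G}^{k+1}f\rVert_{\psi_{\ell}^{n}}\dd u
	\le C_T\lVert f\rVert_{\psi_{\ell-(k+1)}^{(n)},2(k+1)},
\end{equation*}
uniformly in $t\in[0,T]$, which is what was claimed. The only genuinely delicate point is the bookkeeping in the second step — tracking how each application of $\mathcal{G}$ consumes two derivatives of smoothness and one step of the scale $H_\ell$ — but this is routine once the iterated corollary on $\mathcal{G}^j$ is in hand.
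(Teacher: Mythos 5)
Your proof is correct and is precisely the argument the paper intends: the paper leaves this corollary without an explicit proof, treating it as an immediate consequence of the iterated mapping property of $\mathcal{G}^j$ combined with the standard Taylor formula with integral remainder for strongly continuous semigroups. Your bookkeeping of how each application of $\mathcal{G}$ consumes two derivatives and one level of the scale $H_\ell$, and the resulting bound on $R_{t,k}$ via the uniform boundedness of $P_s$ on $[0,T]$, fills in exactly the details the authors omit.
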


\subsection{Taylor expansion of cubature approximations}
For a multiindex $\alpha=(i_1,\dots,i_k)$, we define $\deg(\alpha):=k+\#\left\{ j=1,\dots,k\colon i_j=0 \right\}$.
The empty multiindex is denoted by $\emptyset$, corresponds to $k=0$, and satisfies $\deg(\emptyset)=0$.
We set
\begin{equation}
	\mathcal{A}_m:=\left\{ \alpha\colon \deg(\alpha)\le m \right\}
	\quad\text{and}\quad
	\mathcal{A}_m^{*}:=\mathcal{A}_m\setminus\left\{ \emptyset,(0) \right\}.
\end{equation}
\begin{theorem}
  \label{thm:localexpansioncubature}
  Assume that the cubature formula
  is of order $m=2k+1$.
  For $f\in\mathcal{B}^{\psi_{\ell-(k+1)}^{(n)}}_{2(k+1)}( H_{\ell-(k+1)} )$, $k+1\le\ell\le\ell_0$, $n\ge 2(k+2)$,
  \begin{equation}
    Q_{(\Delta t)}f
    =
    \sum_{j=0}^{k}\frac{(\Delta t)^j}{j!}\mathcal{G}^{j} f + (\Delta t)^{k+1}\hat{R}_{\Delta t,k}f,
  \end{equation}
  where the linear operator $\hat{R}_{\Delta t,k}\colon\mathcal{B}^{\psi_{\ell-(k+1)}^{(n)}}_{2(k+1)}( H_{\ell-(k+1)} )\to\mathcal{B}^{\psi_{\ell}^{n}}( H_{\ell} )$ satisfies
  \begin{equation}
    \lVert \hat{R}_{\Delta t,k}f\rVert_{\psi_{\ell}^{n}}
    \le
    C_T \lVert f\rVert_{\psi_{\ell-(k+1)}^{(n)},2(k+1)}
    \quad\text{for $\Delta t\in[0,T]$}
  \end{equation}
  for a constant $C_T>0$ independent of $f$.
\end{theorem}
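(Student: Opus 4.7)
The plan is to perform a Stratonovich-type Taylor expansion of $f(X^x_{\Delta t}(\omega^{(\Delta t)}_i))$ along each deterministic cubature trajectory, then exploit the defining property of the cubature formula at order $m=2k+1$ to replace the weighted sum of iterated path integrals by the corresponding Brownian expected iterated Stratonovich integrals, and finally match the result with the semigroup Taylor expansion from Corollary~\ref{cor:localexpansionspde}.

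Concretely, I would first apply the moving frame reduction from the proof of Theorem~\ref{thm:infdimgroup}: writing $X^x_t=S_t Y^x_t$ with $\tilde V_j(s,y):=S_{-s}V_j(S_sy)$ turns the mild SPDE into a non-autonomous Hilbert-space SDE whose vector fields remain in $\mathcal{C}^{\varphi_\ell}_k(H_\ell,H_\ell)$ uniformly in $s\in[0,\Delta t]$ (by boundedness of $S_t$ on each $H_\ell$), and yields the identity $Q_{(\Delta t)}f(x)=\tilde Q_{(\Delta t)}(f\circ S_{\Delta t})(x)$. Because each cubature path is smooth of bounded variation, iterating the chain rule for a single trajectory produces
\begin{equation}
f(Y^x_{\Delta t}(\omega^{(\Delta t)}_i)) = \sum_{\alpha\in\mathcal{A}_m}(V_\alpha f)(x)\,\Phi_{\alpha,i}(\Delta t) + R_i(\Delta t,x),
\end{equation}
where $V_\alpha$ denotes the iterated Lie derivative associated with the multi-index $\alpha$, $\Phi_{\alpha,i}(\Delta t)$ is the corresponding iterated integral against $d\omega^{(\Delta t),\alpha}_i$, and $R_i$ collects iterated integrals of degree $m+1=2(k+1)$ of integrands of the form $V_\beta f$ evaluated along the trajectory.

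Weighting by $\lambda_i$ and summing, the cubature-order property replaces $\sum_i\lambda_i\Phi_{\alpha,i}(\Delta t)$ by the expected Brownian iterated Stratonovich integral for every $\alpha\in\mathcal{A}_m$. Performing the analogous expansion for the exact semigroup $P_{\Delta t}f$ via iterated It\^o calculus together with the generator representation in Theorem~\ref{thm:representationG1} (rigorous under $f\in\mathcal{B}^{\psi_{\ell-(k+1)}^{(n)}}_{2(k+1)}(H_{\ell-(k+1)})$) produces the same sum over $\mathcal{A}_m$ with the very same Brownian expected iterated integrals. Comparing with Corollary~\ref{cor:localexpansionspde} then identifies the deterministic main part of $Q_{(\Delta t)}f$ as $\sum_{j=0}^k(\Delta t)^j\mathcal{G}^j f/j!$, and the discrepancy becomes $\hat R_{\Delta t,k}f=R_{\Delta t,k}f+(\Delta t)^{-(k+1)}\sum_i\lambda_i R_i(\Delta t,\cdot)$, whose first contribution is already controlled by Corollary~\ref{cor:localexpansionspde}.

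The hard part is the uniform bound on the cubature remainder $\sum_i\lambda_i R_i(\Delta t,\cdot)$ in the $\psi_\ell^n$-norm. Each summand is an iterated integral of degree $2(k+1)$ of some $V_\beta f$ evaluated along $Y^x_s(\omega^{(\Delta t)}_i)$; using $\lvert\omega^{(\Delta t),j}_i(s)\rvert\le C(\Delta t)^{1/2}$ together with the Fubini-type estimate already exploited in the proofs of Theorems~\ref{thm:stabilityfinitedimensions} and \ref{thm:nonautinfdim-stability}, each such term is bounded by $C(\Delta t)^{k+1}\sup_s\lvert(V_\beta f)(Y^x_s(\omega^{(\Delta t)}_i))\rvert$. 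Repeatedly applying Corollary~\ref{cor:Lipschitzvectorfields} (combined with the fact that $A\in\mathcal{C}^{\varphi_{\ell+1}}_k(H_{\ell+1},H_\ell)$ noted in Remark~\ref{rem:LipschitzVF}) accounts exactly for the loss of $k+1$ levels in the weight hierarchy, thereby explaining the assumptions $\ell\ge k+1$ and $n\ge 2(k+2)$, and bounds $V_\beta f$ in $\mathcal{B}^{\psi_\ell^n}(H_\ell)$ by $C\lVert f\rVert_{\psi_{\ell-(k+1)}^{(n)},2(k+1)}$. Combined with the pointwise stability control of $\psi_\ell^n(Y^x_s(\omega^{(\Delta t)}_i))$ by $C\psi_\ell^n(x)$ built into the proof of Theorem~\ref{thm:nonautinfdim-stability}, this yields the required estimate on $\hat R_{\Delta t,k}$.
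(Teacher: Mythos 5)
Your overall strategy --- Taylor expansion along each cubature trajectory, matching the weighted iterated path integrals against the Brownian expected iterated Stratonovich integrals via the order-$m$ property, identifying the main part with $\sum_{j=0}^{k}\tfrac{(\Delta t)^j}{j!}\mathcal{G}^jf$ through Theorem~\ref{thm:representationG1}, bounding the degree-$2(k+1)$ remainder by the scaling $\lvert\omega^{(\Delta t),j}_i(s)\rvert\le C(\Delta t)^{1/2}$ together with Corollary~\ref{cor:Lipschitzvectorfields} to explain the loss of $k+1$ levels in the hierarchy, and finishing with a density argument --- is exactly the paper's plan. However, your very first step, routing the expansion through the moving frame, introduces a genuine gap. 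The paper performs the Taylor expansion \emph{directly} on the mild cubature equation, absorbing the generator into the drift via $\beta_0(x):=Ax+V_0(x)$ and letting the scale of spaces $H_\ell$ absorb the unboundedness; the moving frame is used only for the \emph{stability} proof (Theorem~\ref{thm:infdimgroup}), and the paper explicitly points out that the transformed vector fields $\tilde V_j(s,y)=S_{-s}V_j(S_sy)$ are nonsmooth in the time variable, which ``makes a Taylor expansion impossible'' --- that is precisely why the weak symmetry condition is introduced there instead of an expansion argument.

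Concretely, two things break in your version. First, ``iterating the chain rule for a single trajectory'' on $f(Y^x_{\Delta t}(\omega^{(\Delta t)}_i))$ requires differentiating $\tilde V_j(s,\cdot)$ in $s$; each such derivative produces commutator-type terms involving $A$ (e.g.\ $-S_{-s}AV_j(S_sy)+S_{-s}DV_j(S_sy)AS_sy$), so the expansion is \emph{not} of the form $\sum_{\alpha}(V_\alpha f)(x)\Phi_{\alpha,i}(\Delta t)$ with coefficients frozen at $x$: the integrands carry explicit time dependence, and the cubature moment-matching identity only replaces $\sum_i\lambda_i\Phi_{\alpha,i}$ by Brownian expectations when the coefficients in front of the iterated integrals are constants (values at the initial point). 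Second, your identity $Q_{(\Delta t)}f=\tilde Q_{(\Delta t)}(f\circ S_{\Delta t})$ means the test function itself depends on $\Delta t$, so matching the resulting expansion with $\mathcal{G}^jf$ requires an additional expansion of $f\circ S_{\Delta t}$ in $\Delta t$, again consuming regularity in the $H_\ell$-scale that you have not budgeted for. The fix is simply to drop the moving frame here and expand as the paper does, with $\beta_0=A+V_0$ and $\beta_j=V_j$, $j=1,\dots,d$, so that each application of $\beta_0$ costs one level $\ell\mapsto\ell-1$ and two derivatives, which is exactly what the hypotheses $\ell\ge k+1$ and $f\in\mathcal{B}^{\psi^{(n)}_{\ell-(k+1)}}_{2(k+1)}(H_{\ell-(k+1)})$ provide; your remainder estimates then go through essentially as you wrote them.
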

\begin{proof}
  Under the assumptions on the vector fields, we can easily see that for every $f\in\mathcal{A}(H_{\ell-(k+1)})$, we have the Taylor expansion
  \begin{alignat}{2}{}
    f(X^x_{\Delta t}&(\omega^{(\Delta t)}_i)) \\
    &=
    \sum_{(i_1,\dots,i_k)\in\mathcal{A}_m}V_{i_1}\dots V_{i_k}f(x) I^{(i_1,\dots,i_k)}_{\Delta t}(\omega^{(\Delta t)}_i) + \hat{R}^{i}_{\Delta t,k}f(x), \notag
  \end{alignat}
  where we define the iterated integrals by
  \begin{alignat}{2}{}
    I^{(i_1,\dots,i_k)}_{\Delta t}&(\omega_i^{(\Delta t)},g)
    \\ &
    :=
    \int_{0<t_1<\dots <t_k<\Delta t}g(X^x_{t_1}(\omega^{(\Delta t)}_i))\dd\omega^{(t),i_1}_{i}(t_1)\dots\dd\omega^{(t),i_k}_{i}(t_k),
    \notag 
  \end{alignat}
  $I^{(i_1,\dots,i_k)}_{\Delta t}(\omega^{(\Delta t)}_{i}):=I^{(i_1,\dots,i_k)}_{\Delta t}(\omega^{(\Delta t)}_{i},1)$, the remainder term $\hat{R}^{i}_{\Delta t,k}f$ satisfies
  \begin{equation}
    \hat{R}^{i}_{\Delta t,k}f(x)
    =
    \sum_{\substack{(i_1,\dots,i_k)\in\mathcal{A}_m\\(i_0,i_1,\dots,i_k)\notin\mathcal{A}_{m}}}
    I^{(i_0,\dots,i_k)}_{\Delta t}(\omega^{(\Delta t)}_i,f_{(i_0,\dots,i_k)}),
  \end{equation}
  and we set $\beta_0(x):=Ax+V_0(x)$, $\beta_j(x):=V_j(x)$, $j=1,\dots,d$, and $f_{(i_0,\dots,i_k)}:=\beta_{i_0}\dots\beta_{i_k}f$, $(i_0,\dots,i_k)\in\left\{ 0,\dots,d \right\}^{k+1}$.
  Summing up, it is easy to see by the scaling of the cubature paths that we can find a remainder term $(\Delta t)^{k+1}$ as in the claim of the theorem with the correct estimates.
  To see that the initial terms have the form given, we use the order $2k+1$ of the cubature and the explicit formula of $\mathcal{G}$ from Theorem~\ref{thm:representationG1}.
  A density argument proves the result.
\end{proof}

\subsection{The rate of convergence}
We can now present our main result.
\begin{theorem}
  For $f\in\mathcal{B}^{\psi_{\ell-(k+1)}^{(n)}}_{2(k+1)}( H_{\ell-(k+1)} )$, $k+1\le\ell\le\ell_0$, $n>2(k+1)$, $2(k+1)\le k_0$,
  \begin{equation}
    \lVert P_T f - Q_{(T/n)}^n f\rVert_{\psi_{\ell}^{n}}
    \le
    C_T n^{-k}\lVert f\rVert_{\psi_{\ell-(k+1)}^{(n)},2(k+1)}
  \end{equation}
  with a constant $C_T$ independent of $f$.
\end{theorem}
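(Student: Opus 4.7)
The plan is to execute the classical Lady Windermere's fan / telescoping argument, balancing local consistency (from the Taylor expansions of $P_t$ and $Q_{(\Delta t)}$) against stability of the discrete operator and regularity preservation of the exact semigroup. Set $\Delta t:=T/n$ and write
\begin{equation}
  P_T - Q_{(\Delta t)}^n
  =
  \sum_{j=0}^{n-1} Q_{(\Delta t)}^{j}\bigl( P_{\Delta t} - Q_{(\Delta t)} \bigr) P_{T - (j+1)\Delta t}.
\end{equation}
Applying this identity to $f$ and measuring in the $\psi_{\ell}^{n}$-norm is the backbone of the argument.

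First, I would combine Corollary~\ref{cor:localexpansionspde} (Taylor expansion of $P_t$) with Theorem~\ref{thm:localexpansioncubature} (Taylor expansion of $Q_{(\Delta t)}$). Because the cubature is of order $m=2k+1$, both expansions agree through order $k$ in $\Delta t$, hence for every $g\in\mathcal{B}^{\psi_{\ell-(k+1)}^{(n)}}_{2(k+1)}(H_{\ell-(k+1)})$,
\begin{equation}
  \bigl\lVert (P_{\Delta t} - Q_{(\Delta t)}) g \bigr\rVert_{\psi_{\ell}^{n}}
  \le
  C_T\,(\Delta t)^{k+1}\, \lVert g\rVert_{\psi_{\ell-(k+1)}^{(n)},\,2(k+1)},
\end{equation}
uniformly for $\Delta t\in[0,T]$. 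This is the local consistency estimate feeding each summand in the telescoping identity.

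Next, I would supply the two structural ingredients that turn local consistency into a global rate. Stability: Corollary~\ref{cor:stabilitycubaturesemigroupspde} applied to $\psi_\ell^{n}$ (which has the required form $\rho(\lVert\cdot\rVert^2)$ with $\rho(u)=(1+u)^{n/2}$, whose derivatives satisfy the required bounds) yields $\lVert Q_{(\Delta t)}^{j}\rVert_{L(\mathcal{B}^{\psi_{\ell}^{n}}(H_{\ell}))} \le \exp(CT)$ uniformly for $j\Delta t\le T$, after noting that positivity plus the weight bound extends to the full operator norm in the standard way. Regularity preservation: the second assumption on $(P_t)_{t\ge 0}$ guarantees that $P_{T-(j+1)\Delta t}$ is a bounded operator on $\mathcal{B}^{\psi_{\ell-(k+1)}^{(n)}}_{2(k+1)}(H_{\ell-(k+1)})$, uniformly in $j$, provided $2(k+1)\le k_0$ and $n>2(k+1)$, which are precisely the hypotheses of the theorem. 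Combining these, each summand in the telescoping identity is bounded by
\begin{equation}
  C_T (\Delta t)^{k+1}\,\lVert f\rVert_{\psi_{\ell-(k+1)}^{(n)},\,2(k+1)},
\end{equation}
and summing $n$ such terms produces $C_T\, n\,(T/n)^{k+1} = C_T\, T^{k+1} n^{-k}$, which is the claimed estimate after absorbing $T^{k+1}$ into $C_T$.

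The main obstacle is bookkeeping the tower of function spaces so that the local error estimate is actually applicable at every intermediate time $T-(j+1)\Delta t$: one needs the composed operator $P_{T-(j+1)\Delta t}$ to send $f$ back into $\mathcal{B}^{\psi_{\ell-(k+1)}^{(n)}}_{2(k+1)}(H_{\ell-(k+1)})$, not merely into some weaker space, before $(P_{\Delta t}-Q_{(\Delta t)})$ is allowed to act. This is exactly what the second assumption on $(P_t)_{t\ge 0}$ buys us, and the condition $2(k+1)\le k_0$ is the precise threshold that makes this work. Everything else is a routine chase of norms through the telescoping sum.
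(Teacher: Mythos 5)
Your proposal is correct and is exactly the argument the paper intends: the paper's (very terse) proof combines the same three ingredients --- local consistency from Corollary~\ref{cor:localexpansionspde} together with Theorem~\ref{thm:localexpansioncubature}, stability of $Q_{(T/n)}$ from Corollary~\ref{cor:stabilitycubaturesemigroupspde}, and the assumed $P_t$-invariance of $\mathcal{B}^{\psi_{\ell-(k+1)}^{(n)}}_{2(k+1)}( H_{\ell-(k+1)} )$ --- via the standard telescoping identity you write out. You have merely made explicit the Lady Windermere's fan decomposition and the bookkeeping that the paper leaves to the reader.
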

\begin{proof}
  The local estimate follows from a combination of Corollary~\ref{cor:localexpansionspde} and Theorem~\ref{thm:localexpansioncubature}.
  The stability of $Q_{(T/n)}$ from Corollary~\ref{cor:stabilitycubaturesemigroupspde} and the assumed invariance of $\mathcal{B}^{\psi_{\ell-(k+1)}^{(n)}}_{2(k+1)}( H_{\ell-(k+1)} )$ with respect to $P_t$ prove the claim.
\end{proof}

\begin{example}
	Let $H_i=\mathbb{R}^N$ be finite-dimensional, and assume $n\ge 5$; in the finite-dimensional setting, we do not need to consider subspaces of the state space.
	Then, $f\in\mathcal{B}^{\psi^{(n)}}_{k}(H)$ for all $k\ge 0$, where $f(y)=y_i^m$, $m=1,\dots,4$.
	This implies that not only the expected value and the variance, but also the skewness and kurtosis are accurately computed by our scheme.
	Similarly, mixed moments are determined to high accuracy, and if $n$ is even larger, this also holds true for higher order moments.
	Such a property is very useful in risk management, where high precision in higher moments means an accurate evaluation of risk.
	Similar observations were made in \cite{Alfonsi2010,TanakaKohatsuHiga2009}.

	To illustrate this practically relevant behavior, we consider the Heston model, i.e., $(X^x_t,V^v_t)_{t\ge 0}$ solves the stochastic differential equation
	\begin{alignat}{2}{}
		\dd X^x_t &= \mu\dd t + \sqrt{V^v_t}\dd B^1_t, & \quad X^x_0 &= x, \\
		\dd V^v_t &= \kappa(\theta-V^v_t)\dd t + \beta\sqrt{V^v_t}\dd( \rho\dd B^1_t + \sqrt{1-\rho^2}\dd B^2_t ), & V^v_0 &= v,
	\end{alignat}
	and the stock price is given by $S_t:=\exp(X^x_t)$, see, e.g., \cite{BayerFrizLoeffen2010}.
	$V^v_t$ is the squared stochastic volatility.
	As $(X^x_t,V^v_t)_{t\ge 0}$ is a polynomial process, analytical formulas for the moments are available.
	For our simulation, we choose the parameters $\mu=.02$, $\kappa=5.$, $\theta=.09$, $\beta=.6$, and $\rho=-.8$.
	The starting values are chosen to be $x=\log(9.)$ and $v=.0625$, such that $S_0=9$.
	We are interested in finding the mean $m:=\mathbb{E}[X^x_t]$, the variance $\mathrm{var}:=\mathbb{E}[(X^x_t-m)^2]$, the skewness $\mathrm{skew}:=\mathrm{var}^{-3/2}\mathbb{E}[(X^x_t-m)^3]$, and the kurtosis $\mathrm{kurt}:=\mathrm{var}^{-2}\mathbb{E}[(X^x_t-m)^4]$.
	With the parameters given above, the exact values are found to be
	\begin{subequations}
		\begin{align}
			m &= 2.192936688809, & \mathrm{var} &= 0.019329503330, \\
			\mathrm{skew} &= -0.885007761283, \quad\text{and} & \mathrm{kurt} &= 4.321997672912.
		\end{align}
	\end{subequations}

	In our numerical simulation, we choose the cubature paths by applying a splitting of Ninomiya-Victoir type, where the normal random variables are replaced by Gauss-Hermite quadrature, see also \cite{TanakaKohatsuHiga2009}, where this splitting is also considered.
	For two-dimensional Brownian motion, this implies that we have $2q$ paths per interval, and that for $i=1,\dots,q$,
	\begin{alignat}{2}{}
		\omega^{(1)}_i(s) &:=
		\begin{cases}
			\begin{pmatrix} 3s, 0, 0 \end{pmatrix}^T, & s\in[0,1/3], \\
			\begin{pmatrix} 1, 3\xi_{i,1}(s-1/3), 0 \end{pmatrix}^T, & s\in[1/3,2/3], \\
			\begin{pmatrix} 1, \xi_{i,1}, 3\xi_{i,2}(s-2/3) \end{pmatrix}^T, & s\in[2/3,1],
		\end{cases}
		\quad\text{and} \\
		\omega^{(1)}_{q+i}(s) &:=
		\begin{cases}
			\begin{pmatrix} 0, 0, 3\xi_{i,2}s \end{pmatrix}^T, & s\in[0,1/3], \\
			\begin{pmatrix} 0, 3\xi_{i,1}(s-1/3), \xi_{i,2} \end{pmatrix}^T, & s\in[1/3,2/3], \\
			\begin{pmatrix} 3(s-2/3), \xi_{i_1}, \xi_{i,2} \end{pmatrix}^T, & s\in[2/3,1];
		\end{cases}
	\end{alignat}
	see also \cite[Example~2.2]{GyongyKrylov2006} for a similar rewriting of splitting-up methods.
	Here, $(\xi_i,w_i)_{i=1,\dots,q}$ defines a $q$-point integration rule for a standard normal random variable in two dimensions of order $5$.
	The weights of the cubature formula are $\lambda_i=\lambda_{q+i}=\frac{w_i}{2}$.
	This cubature formula is weakly symmetric, as $\sum_{i=1}^{q}w_i\xi_{i,1}=\sum_{i=1}^{q}w_i\xi_{i,2}=0$.
	The resulting ordinary differential equations can be solved exactly, see \cite{LordKoekkoekVanDijk2010,BayerFrizLoeffen2010} for the analytic formulas.

	For simplicity, we assume that $(\xi_i,w_i)_{i=1,\dots,q}$ is the tensor product Gauss-Hermite quadrature of order $5$.
	This implies $q=9$, and hence, we have $18$ paths per time interval.
	More efficient quadratures, in particular for the case of a high-dimensional driving Brownian motion, can be found in \cite{Stroud1971}.

	The numerical results are given in Figure~\ref{fig:heston_moments}. 
	We observe the second order rate of convergence.
	With $8$ time steps, we obtain a relative error of less than $10^{-2}$ for all quantities of interest.
	\begin{figure}[htpb]
		\begin{center}
			\includegraphics{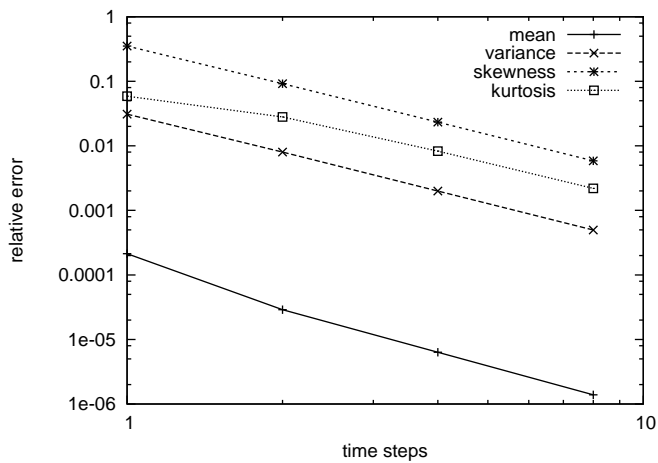}
		\end{center}
		\caption{Convergence of mean, variance, skewness and kurtosis of the log price in the Heston model}
		\label{fig:heston_moments}
	\end{figure}
\end{example}
\begin{example}
	The Heath-Jarrow-Morton framework is included in our setup.
	As explained in \cite{DoersekTeichmann2011}, it is more natural to use $\cosh$-weighted spaces instead of polynomially weighted spaces in this case.
	The more general definition of vector fields in Definition~\ref{def:vectorfield} allows us to enlarge the class of admissible equations considerably compared to \cite{DoersekTeichmann2011}.
\end{example}

\subsection{Smoothing effects under the UFG condition}
It is proved in \cite{Kusuoka2004,LyonsVictoir2004} that under the UFG condition, we obtain the optimal rate of convergence even for nonsmooth payoffs $f$ by using non-uniform time steps due to the smoothing effects of $P_t f$ in the direction of the vector fields $V_j$.
The aim of this section is to show that a corresponding result holds true for unbounded payoffs, as well.  
In particular, we will focus on exponentially growing payoffs through the choice of $\cosh(\alpha\lVert x\rVert)$ as the weight function.
This has important applications in mathematical finance, where one frequently models the log price as the solution of a stochastic differential equation, and thus, all payoffs will be a function of the exponential of the stochastic process.

We assume that we are in the finite dimensional situation, $H=\mathbb{R}^N$ for some $N\in\mathbb{N}$, and that $A=0$.
Suppose that all vector fields $V_j\colon\mathbb{R}^N\to\mathbb{R}^N$ are bounded and $\mathrm{C}^{\infty}$-bounded.
We choose the D-admissible weight function $\psi(x):=\cosh(\alpha\lVert x\rVert)$ for some $\alpha>0$.
\begin{proposition}
	\label{prop:coshalphacompat}
	For any $\alpha>0$, there exists $C>0$ such that
	\begin{equation}
		\mathbb{E}[\cosh(\alpha\lVert X^x_t\rVert)]
		\le
		\exp(Ct)\cosh(\alpha\lVert x\rVert).
	\end{equation}
\end{proposition}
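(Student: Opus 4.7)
The plan is to apply It\^o's formula to $\psi(x) := \cosh(\alpha\lVert x\rVert)$, bound the action of the Markov generator $\mathcal{G}$ on $\psi$ pointwise by a multiple of $\psi$, and conclude via Gronwall's lemma together with a standard stopping-time argument for integrability.

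The first task is to verify that $\psi\in C^\infty(\mathbb{R}^N)$ despite $\lVert\cdot\rVert$ being non-differentiable at the origin. Since $\cosh$ is an even entire function, $\cosh(\alpha r) = \sum_{k\ge 0}\frac{\alpha^{2k}}{(2k)!}r^{2k}$ is analytic in $r^2$, so $\psi$ is a convergent power series in the smooth quantity $\lVert x\rVert^2$. A direct computation, most easily carried out away from the origin and extended by continuity, then yields
\begin{equation*}
	\lVert D\psi(x)\rVert \le \alpha\,\psi(x), \qquad \lVert D^2\psi(x)\rVert \le 2\alpha^2\,\psi(x).
\end{equation*}
Converting the Stratonovich equation to It\^o form (whose drift remains bounded thanks to the $C^\infty$-boundedness of the $V_j$), the Markov generator of $(X^x_t)_{t\ge 0}$ reads $\mathcal{G} = V_0 + \frac{1}{2}\sum_{j=1}^d V_j^2$. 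Combining the above derivative bounds on $\psi$ with $\sup_x\lVert V_j(x)\rVert$ and $\sup_x\lVert DV_j(x)\rVert$ delivers
\begin{equation*}
	\lvert\mathcal{G}\psi(x)\rvert \le C_0\,\psi(x) \quad\text{for all } x\in\mathbb{R}^N,
\end{equation*}
with $C_0$ depending only on $\alpha$ and on $C^2$-bounds of the $V_j$; this is essentially Assumption~\eqref{eq:stabilityfinitedimensions-assumption} applied to the true process rather than its cubature approximation.

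To handle integrability I would localise by the stopping times $\tau_n := \inf\{t : \lVert X^x_t\rVert \ge n\}$. Applying It\^o's formula up to $t\wedge\tau_n$ turns the stochastic integral into a bounded martingale, so taking expectations gives
\begin{equation*}
	\mathbb{E}[\psi(X^x_{t\wedge\tau_n})] \le \psi(x) + C_0\int_0^t \mathbb{E}[\psi(X^x_{s\wedge\tau_n})]\,ds,
\end{equation*}
and Gronwall yields $\mathbb{E}[\psi(X^x_{t\wedge\tau_n})] \le \exp(C_0 t)\,\psi(x)$. As the $V_j$ are bounded, $X^x$ cannot explode in finite time, so $\tau_n\to\infty$ almost surely, and Fatou's lemma concludes. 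The only genuine obstacle is the regularity of $\psi$ at the origin; resolving it via the power-series observation above makes the remainder a routine exercise. As a fallback one could instead work with the smoothed weight $\psi_\varepsilon(x) := \cosh(\alpha\sqrt{\varepsilon + \lVert x\rVert^2})$ and pass $\varepsilon\downarrow 0$, but this detour is unnecessary.
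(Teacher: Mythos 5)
Your proof is correct and follows essentially the same route as the paper: bound the derivatives of $\cosh(\alpha\lVert x\rVert)$ by multiples of itself, deduce $\mathcal{G}\psi\le C\psi$ from the boundedness of the vector fields, and conclude by Gronwall. The only difference is that you supply the localisation (stopping times plus Fatou) and the smoothness-at-the-origin discussion that the paper leaves implicit, which is a welcome but not substantively different addition.
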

\begin{proof}
	Note that for any $k\in\mathbb{N}$,
	\begin{equation}
		D^k \cosh(\alpha\lVert x\rVert)(h_1,\dots,h_k)
		\le
		C_k\cosh(\alpha\lVert x\rVert)\prod_{j=1}^{k}\lVert h_j\rVert.
	\end{equation}
	With $\mathcal{G}$ the generator of $P_t$, we thus obtain from the boundedness of the vector fields that $\mathcal{G}\cosh(\alpha\lVert x\rVert)\le C\cosh(\alpha\lVert x\rVert)$.
	It follows that
	\begin{alignat}{2}{}
		\mathbb{E}[\cosh(\alpha\lVert X^x_t\rVert)]
		&=
		\cosh(\alpha\lVert x\rVert) + \int_{0}^{t}\mathbb{E}[\mathcal{G}\cosh(\alpha\lVert X^x_s\rVert)]\dd s \\
		&\le
		\cosh(\alpha\lVert x\rVert) + \int_{0}^{t}C\mathbb{E}[\cosh(\alpha\lVert X^x_s\rVert)]\dd s.
	\end{alignat}
	The Gronwall inequality now proves the claim.
\end{proof}
\begin{corollary}
	\label{cor:coshalphapcompat}
	For any $p\in[1,\infty)$ and $T>0$, there exists $C_T>0$ such that
	\begin{equation}
		\mathbb{E}[\cosh(\alpha\lVert X^x_t\rVert)^p]^{1/p}
		\le
		C_T\cosh(\alpha\lVert x\rVert)
		\quad\text{for all $t\in[0,T]$}.
	\end{equation}
\end{corollary}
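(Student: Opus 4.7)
The idea is that Proposition~\ref{prop:coshalphacompat} is already strong enough, because it can be applied at the exponent $p\alpha$ rather than $\alpha$, provided one first turns the $p$-th power of $\cosh(\alpha\lVert\cdot\rVert)$ into a single $\cosh$ at the inflated argument.

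Concretely, I would first record the elementary comparison, valid for all $y\in\mathbb{R}$ and $p\in[1,\infty)$:
\begin{equation*}
  \cosh(y)^p \;\le\; 2\cosh(py), \qquad \cosh(py) \;\le\; 2^p \cosh(y)^p,
\end{equation*}
which both follow immediately from $\tfrac{1}{2}e^{|y|} \le \cosh(y) \le e^{|y|}$ and the analogous bounds for $\cosh(py)$. This reduces matters to controlling a first moment of a $\cosh$ at a different scale.

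Next I would apply Proposition~\ref{prop:coshalphacompat} with $\alpha$ replaced by $p\alpha$ (the statement is valid for \emph{any} positive constant in the argument of $\cosh$, since the constant $C$ there is allowed to depend on the chosen $\alpha$). This yields a constant $C=C(p\alpha)>0$ with
\begin{equation*}
  \mathbb{E}\bigl[\cosh(p\alpha\lVert X^x_t\rVert)\bigr]
  \;\le\; \exp(Ct)\,\cosh(p\alpha\lVert x\rVert).
\end{equation*}
Chaining this with the two elementary comparisons gives, for $t\in[0,T]$,
\begin{equation*}
  \mathbb{E}\bigl[\cosh(\alpha\lVert X^x_t\rVert)^p\bigr]
  \;\le\; 2\,\mathbb{E}\bigl[\cosh(p\alpha\lVert X^x_t\rVert)\bigr]
  \;\le\; 2\exp(CT)\cdot 2^p\cosh(\alpha\lVert x\rVert)^p.
\end{equation*}
Taking $p$-th roots produces the required inequality with $C_T := 2^{1+1/p}\exp(CT/p)$.

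There is essentially no obstacle here; the statement is a direct scaling corollary of Proposition~\ref{prop:coshalphacompat}, and the only point worth highlighting is that the constant $C$ provided by that proposition depends on the constant in front of $\lVert\cdot\rVert$, so one must be careful to re-invoke it with $p\alpha$ rather than trying to deduce the $L^p$ bound from the $L^1$ bound directly by Jensen or Hölder.
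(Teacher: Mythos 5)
Your argument is correct and is essentially identical to the paper's own proof, which likewise reduces the claim to the elementary two-sided comparison $C^{-1}\cosh(pu)\le\cosh(u)^p\le C\cosh(pu)$ and then invokes Proposition~\ref{prop:coshalphacompat} at the inflated exponent. No further comment is needed.
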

\begin{proof}
	We only need to note that for any $p\in[1,\infty)$, there exists some constant $C>0$ with $C^{-1}\cosh(pu)\le\cosh(u)^p\le C\cosh(pu)$ for all $u\in[0,\infty)$, and apply Proposition~\ref{prop:coshalphacompat}.
\end{proof}
We formulate now the ellipticity assumptions that are necessary to obtain smoothing effects.
We follow \cite{CrisanGhazali2007}.
\begin{description}
	\item[The UFG condition]
		There exists $\ell\in\mathbb{N}$ such that for every $\alpha\in\mathcal{A}^{*}$, there exist $\varphi_{\alpha,\beta}\in\mathrm{C}_b^{\infty}(\mathbb{R}^N)$, $\beta\in\mathcal{A}_\ell^{*}$, such that
		\begin{equation}
			V_{[\alpha]}
			=
			\sum_{\beta\in\mathcal{A}_\ell^{*}}\varphi_{\alpha,\beta}V_{[\beta]}.
		\end{equation}
	\item[The V0 condition]
		For some $\varphi_{\beta}\in\mathrm{C}_b^{\infty}(\mathbb{R}^N)$, $\beta\in\mathcal{A}_2^{*}$,
		\begin{equation}
			V_0
			=
			\sum_{\beta\in\mathcal{A}_2^{*}}\varphi_{\beta}V_{[\beta]}.
		\end{equation}
\end{description}
\begin{theorem}
	\label{thm:smoothingBpsi}
	Assume that the UFG and V0 conditions are satisfied.
	Then, for any $f\in\mathrm{C}_b^{\infty}(\mathbb{R}^N)$, any $k,m\ge 0$ and any $i_1,\dots,i_{k+m}=0,1,\dots,d$,
	\begin{equation}
		\lVert V_{i_1}\dots V_{i_k}P_t V_{i_{k+1}}\dots V_{i_{k+m}}f\rVert_{\psi}
		\le
		Ct^{-\deg(i_1,\dots,i_{k+m})/2}\lVert f\rVert_{\psi}.
	\end{equation}
\end{theorem}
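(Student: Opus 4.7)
My plan is to reduce the $\psi$-weighted smoothing estimate to the classical $L^\infty$ smoothing estimates under the UFG and V0 conditions, then to pass to $\psi$-weighted norms via the exponential moment bound of Corollary~\ref{cor:coshalphapcompat} combined with Hölder's inequality.

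First I would invoke the Kusuoka--Stroock-type Malliavin integration by parts formula underlying the proofs in \cite{Kusuoka2004,CrisanGhazali2007}: under UFG and V0, and owing to the $\mathrm{C}^\infty$-boundedness of the vector fields, for every multiindex $\alpha=(i_1,\dots,i_{k+m})$ and every $t\in(0,T]$ there exists a random variable $\Phi^\alpha_t(x)$ independent of $f$ such that
\begin{equation}
V_{i_1}\cdots V_{i_k}P_t V_{i_{k+1}}\cdots V_{i_{k+m}}f(x)=\mathbb{E}\bigl[f(X^x_t)\Phi^\alpha_t(x)\bigr]
\end{equation}
for all $f\in\mathrm{C}_b^\infty(\mathbb{R}^N)$, together with uniform-in-$x$ bounds $\mathbb{E}[\lvert\Phi^\alpha_t(x)\rvert^p]^{1/p}\le C_{p,T}\,t^{-\deg(\alpha)/2}$ for every $p\in[1,\infty)$ and $t\in(0,T]$. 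The power $t^{-\deg(\alpha)/2}$ arises from the scaling of iterated Stratonovich integrals together with the UFG+V0 control of the inverse of the Malliavin covariance matrix.

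Next I would apply Hölder's inequality with conjugate exponents $p,q\in(1,\infty)$ to this representation, and use the pointwise bound $\lvert f(y)\rvert\le\lVert f\rVert_\psi\cosh(\alpha\lVert y\rVert)$ together with Corollary~\ref{cor:coshalphapcompat} to estimate
\begin{equation}
\mathbb{E}[\lvert f(X^x_t)\rvert^q]^{1/q}\le\lVert f\rVert_\psi\,\mathbb{E}[\cosh(\alpha\lVert X^x_t\rVert)^q]^{1/q}\le C_T\lVert f\rVert_\psi\cosh(\alpha\lVert x\rVert).
\end{equation}
Multiplying this by the $L^p$-bound on $\Phi^\alpha_t(x)$ and dividing by $\psi(x)=\cosh(\alpha\lVert x\rVert)$ yields exactly the claimed estimate $\lVert V_{i_1}\cdots V_{i_k}P_t V_{i_{k+1}}\cdots V_{i_{k+m}}f\rVert_\psi\le C t^{-\deg(\alpha)/2}\lVert f\rVert_\psi$.

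The main obstacle is the first step: making the Malliavin weight representation precise in the stated form, with uniform-in-$x$ $L^p$-bounds on $\Phi^\alpha_t(x)$. The $L^\infty$ version of the smoothing estimate in \cite{CrisanGhazali2007} is essentially a consequence of such weights, but the literature often phrases the result only in its $L^\infty$-consequence form, and one must verify that the underlying weights do enjoy uniform-in-$x$ moment bounds of order $t^{-\deg(\alpha)/2}$. Once this representation is in place, the remaining steps---Hölder plus the $\psi$-moment bound of Corollary~\ref{cor:coshalphapcompat}---are routine; the conceptual payoff is precisely that $\lVert f\rVert_\infty$ on the right-hand side is replaced by the potentially much smaller $\lVert f\rVert_\psi$, which is what enables the rate of convergence argument on non-uniform meshes for exponentially growing payoffs.
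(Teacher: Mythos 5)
Your proposal is correct and follows essentially the same route as the paper: the paper also writes $V_{i_1}\dots V_{i_k}P_t V_{i_{k+1}}\dots V_{i_{k+m}}f(x)=\mathbb{E}[f(X^x_t)\pi^x]$ via the Kusuoka--Stroock integration-by-parts result (citing \cite[Corollary~2.17]{KusuokaStroock1987} for the uniform-in-$x$ $L^p$-bounds of order $t^{-\deg/2}$ that you flag as the main obstacle), and then concludes exactly as you do by H\"older's inequality combined with Corollary~\ref{cor:coshalphapcompat}.
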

\begin{proof}
	We can apply \cite[Corollary~2.17]{KusuokaStroock1987} to obtain that for each $x\in\mathbb{R}^N$, there exists a real-valued random variable $\pi^x$, depending on $k$ and $i_1,\dots,i_{k+m}$, such that
	\begin{equation}
		V_{i_1}\dots V_{i_k}P_t V_{i_{k+1}}\dots V_{i_{k+m}} f(x)
		=
		\mathbb{E}[f(X^x_t)\pi^{x}].
	\end{equation}
	Furthermore, for each $p\in[1,\infty)$, there exists a constant $C>0$ with
	\begin{equation}
		\sup_{x\in\mathbb{R}^N}\mathbb{E}[\lvert\pi^x\rvert^p]
		\le
		Ct^{-\deg(i_1,\dots,i_{k+m})/2}.
	\end{equation}
	It follows that for $p$, $q\in(1,\infty)$ with $\frac{1}{p}+\frac{1}{q}=1$,
	\begin{alignat}{2}{}
		\lVert V_{i_1}\dots V_{i_k}P_t V_{i_{k+1}}\dots V_{i_{k+m}}f\rVert_{\psi}
		&\le
		\sup_{x\in\mathbb{R}^N}\psi(x)^{-1}\mathbb{E}[\lvert f(X^x_t)\rvert\cdot\lvert\pi^x\rvert] \\
		&\le
		\lVert f\rVert_{\psi}\sup_{x\in\mathbb{R}^N}\psi(x)^{-1}\mathbb{E}[\psi(X^x_t)^{p}]^{1/p}\cdot\mathbb{E}[\lvert\pi^x\rvert^{q}]^{1/q} \notag \\
		&\le
		Ct^{-\deg(i_1,\dots,i_{k+m})/2}\lVert f\rVert_{\psi}, \notag
	\end{alignat}
	where we apply Corollary~\ref{cor:coshalphapcompat}.
\end{proof}
\begin{corollary}
	\label{cor:nonunifgridBpsi}
	Assume that the UFG and V0 conditions are satisfied.
	Then, for any mesh $0=t_0<\dots<t_n=T$ and $f\in\mathrm{C}_b^{\infty}(\mathbb{R}^N)$,
	\begin{alignat}{2}{}
		\lVert P_T f - &Q_{t_1-t_0}\dotsm Q_{t_n-t_{n-1}}f\rVert_{\psi} \\
		&\le
		C\lVert\nabla f\rVert_{\psi}\left( (t_n-t_{n-1})^{1/2}+\sum_{i=1}^{n-1}\frac{(t_i-t_{i-1})^{(m+1)/2}}{(T-t_i)^{m/2}} \right). \notag
	\end{alignat}
\end{corollary}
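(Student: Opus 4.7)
I would follow the classical Kusuoka/Crisan--Ghazali telescoping argument, transported from $\mathrm{C}_b$ to the weighted space $\mathcal{B}^{\psi}(\mathbb{R}^N)$ with $\psi(x)=\cosh(\alpha\lVert x\rVert)$. Setting $\Delta_i:=t_i-t_{i-1}$ and $\Pi_i:=Q_{\Delta_1}\cdots Q_{\Delta_i}$, use the identity
\begin{equation*}
P_T f - \Pi_n f = \sum_{i=1}^{n}\Pi_{i-1}\bigl(P_{\Delta_i}-Q_{\Delta_i}\bigr)P_{T-t_i}f.
\end{equation*}
I would then estimate the three ingredients separately: stability of $\Pi_{i-1}$, a local consistency bound on $P_{\Delta_i}-Q_{\Delta_i}$ applied to the smoothed function $P_{T-t_i}f$ (for $i<n$), and a crude terminal bound for $i=n$.

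\textbf{Stability and local error expansion.} For stability, I would verify the hypothesis of Theorem~\ref{thm:stabilityfinitedimensions}: the explicit derivative bounds on $\cosh$ together with boundedness of the $V_j$ give $\lvert V_i\psi\rvert+\lvert V_iV_j\psi\rvert\le C\psi$, whence $\lVert\Pi_{i-1}\rVert_{\mathcal{B}^{\psi}\to\mathcal{B}^{\psi}}\le e^{CT}$ uniformly in the mesh. For the local error, I would run the stochastic Taylor expansion of both $P_{\Delta}g$ and $Q_{\Delta}g$ for $g\in\mathrm{C}^{\infty}_b(\mathbb{R}^N)$, cancel all iterated integrals of degree $\le m$ by the cubature condition, and obtain
\begin{equation*}
(P_{\Delta}-Q_{\Delta})g(x)=\sum_{\deg(\alpha)\ge m+1}c_{\alpha}(\Delta)\,V_{\alpha}g(x),\qquad \lvert c_{\alpha}(\Delta)\rvert\le C\Delta^{\deg(\alpha)/2}.
\end{equation*}
Using the UFG and V0 conditions to rewrite every monomial $V_{\alpha}$ in terms of the basic iterated brackets $V_{[\beta]}$, $\beta\in\mathcal{A}_{\ell}^{*}$, with $\mathrm{C}_b^{\infty}$-coefficients, the leading terms take the form (constants and smooth coefficients absorbed) $\Delta_i^{(m+1)/2}V_{[\beta_1]}\cdots V_{[\beta_r]}g$ where $\sum\deg[\beta_j]$ equals the degree of $\alpha$.

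\textbf{Interior steps $i<n$.} I would substitute $g=P_{T-t_i}f$ and route one vector field onto $f$ so as to produce a $\nabla f$, keeping $m$ remaining degrees worth of vector fields outside $P_{T-t_i}$. Concretely, rewrite the leading term so that the innermost factor of the expansion becomes $V_jf$ (peeling off one $V_{[\beta]}$ through the UFG relation and commuting it across $P_{T-t_i}$, the commutators being again expressible via UFG at a lower order). Then Theorem~\ref{thm:smoothingBpsi} applied to the remaining string of vector fields on the outside of $P_{T-t_i}$ yields
\begin{equation*}
\lVert V_{[\beta_1]}\cdots V_{[\beta_{r-1}]}P_{T-t_i}V_jf\rVert_{\psi}\le C(T-t_i)^{-m/2}\lVert V_jf\rVert_{\psi}\le C(T-t_i)^{-m/2}\lVert \nabla f\rVert_{\psi}.
\end{equation*}
Multiplying by $\Delta_i^{(m+1)/2}$ and by the stability constant gives the $i$th summand.

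\textbf{Terminal step and summation.} For $i=n$ no smoothing is available, so I would bound $(P_{\Delta_n}-Q_{\Delta_n})f$ directly by the mean value theorem: $\lvert f(y)-f(x)\rvert\le\lVert\nabla f\rVert_{\psi}\cdot(\psi(x)+\psi(y))\cdot\lVert y-x\rVert$, combined with $\mathbb{E}\lVert X_{\Delta_n}^{x}-x\rVert^{2}\le C\Delta_n$ and the analogous bound for each cubature path, and with Corollary~\ref{cor:coshalphapcompat} to absorb $\psi(X_{\Delta_n}^{x})$. This delivers $(t_n-t_{n-1})^{1/2}\lVert\nabla f\rVert_{\psi}$. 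Summing all contributions and passing from $\mathrm{C}_b^{\infty}$ to general smooth $f$ by density completes the proof. The main obstacle is Step on the interior terms: after the UFG substitution one has to book-keep the commutators between $V_{[\beta]}$ and $P_{T-t_i}$ so that precisely one derivative of $f$ is isolated and exactly $m/2$ powers of smoothing are available; this is the same combinatorial argument as in Crisan--Ghazali, now carried out in the $\mathcal{B}^{\psi}$-norm, where the key additional input is the uniform moment estimate of Corollary~\ref{cor:coshalphapcompat}.
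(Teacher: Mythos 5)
Your proposal follows essentially the same route as the paper: the telescoping decomposition $P_Tf-\Pi_nf=\sum_i\Pi_{i-1}(P_{\Delta_i}-Q_{\Delta_i})P_{T-t_i}f$, stability of the composed cubature operators on $\mathcal{B}^{\psi}$, the local stochastic Taylor expansion combined with the smoothing estimate of Theorem~\ref{thm:smoothingBpsi} to get the $(t_i-t_{i-1})^{(m+1)/2}(T-t_i)^{-m/2}\lVert\nabla f\rVert_{\psi}$ interior bounds, and a crude mean-value/moment bound for the terminal step. The paper is terser (it writes ``summing up in the usual manner'' where you spell out the stability input and the UFG bookkeeping needed to isolate one derivative of $f$), but the substance is identical.
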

\begin{proof}
	We proceed as in the proof of \cite[Proposition~3.6]{LyonsVictoir2004}.
	Assume first that $f\in\mathrm{C}_b^{\infty}(\mathbb{R}^N)$; the general result then follows from a density argument.
	First, note that
	\begin{alignat}{2}{}
		\lVert P_{\Delta t} f-f\rVert_{\psi}
		&\le
		\sup_{x\in\mathbb{R}^N}\psi(x)^{-1}\mathbb{E}[\lvert f(X^x_{\Delta t})-f(x)\rvert] \\
		&\le
		\sup_{x\in\mathbb{R}^N}\psi(x)^{-1}\mathbb{E}[\sup_{s\in[0,1]}\lVert\nabla f(sX^x_{\Delta t}+(1-s)x)\rVert\cdot\lVert X^x_{\Delta t}-x\rVert] \notag \\
		&\le
		\lVert \nabla f\rVert_{\psi}\sup_{x\in\mathbb{R}^N}\psi(x)^{-1}\mathbb{E}[\sup_{s\in[0,1]}\psi(sX^x_{\Delta t}+(1-s)x)^2]^{1/2}\mathbb{E}[\lVert X^x_{\Delta t}-x\rVert^2]^{1/2}. \notag
	\end{alignat}
	As $\lVert sX^x_{\Delta t}+(1-s)x\rVert\le\max(\lVert X^x_{\Delta t}\rVert,\lVert x\rVert)$ for all $s\in[0,1]$ and $\cosh$ is monotonic on $[0,\infty)$, we see that Corollary~\ref{cor:coshalphapcompat} yields
	\begin{equation}
		\lVert P_{\Delta t} f-f\rVert_{\psi}
		\le C(\Delta t)^{1/2}\lVert \nabla f\rVert_{\psi}.
	\end{equation}
	By Theorem~\ref{thm:smoothingBpsi}, we obtain
	\begin{alignat}{2}{}
		\lVert (P_{\Delta t}-Q_{\Delta t})P_{T-t} f\rVert_{\psi}
		&\le
		(\Delta t)^{(m+1)/2}\sum_{\substack{(i_1,\dots,i_k)\in\mathcal{A}_m\\(i_0,i_1,\dots,i_k)\notin\mathcal{A}_m}}\lVert V_{i_0}V_{i_1}\dots V_{i_k}P_{T-t} f\rVert_{\psi} \notag \\
		&\le
		C(\Delta t)^{(m+1)/2}(T-t)^{-m/2}\lVert \nabla f\rVert_{\psi}.
	\end{alignat}
	Summing up in the usual manner, the claim follows.
\end{proof}
\begin{corollary}
	Under the UFG and V0 assumptions, the cubature method converges of optimal order for $f\in\mathcal{B}^{\psi}(\mathbb{R}^N)$ with $\nabla f\in\mathcal{B}^{\psi}(\mathbb{R}^N)$ on graded meshes such as the ones chosen in \cite[Example~3.7]{LyonsVictoir2004}.
\end{corollary}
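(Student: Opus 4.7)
The plan is to apply Corollary~\ref{cor:nonunifgridBpsi} with the graded mesh construction from \cite[Example~3.7]{LyonsVictoir2004}, and then reduce the general case to smooth test functions by a density argument.

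First, fix the mesh $t_i := T(1-(1-i/n)^{\gamma})$ for $i=0,\dots,n$, with exponent $\gamma > m-1$ (taking $\gamma=m$ suffices). A direct computation gives $t_n - t_{n-1} = Tn^{-\gamma}$, whence $(t_n-t_{n-1})^{1/2} = O(n^{-\gamma/2}) = O(n^{-(m-1)/2})$. For the sum, the mean value theorem yields $t_i - t_{i-1} \le C\gamma T n^{-1}(1-(i-1)/n)^{\gamma-1}$ and $T - t_i = T(1-i/n)^{\gamma}$, while $1-(i-1)/n \le 2(1-i/n)$ for $i\le n-1$. Hence the $i$-th summand in Corollary~\ref{cor:nonunifgridBpsi} is bounded by
\begin{equation*}
C n^{-(m+1)/2}(1-(i-1)/n)^{(\gamma - m - 1)/2},
\end{equation*}
and $\frac{1}{n}\sum_{i=1}^{n-1}(1-(i-1)/n)^{(\gamma-m-1)/2}$ is a convergent Riemann sum provided $\gamma > m-1$. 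Thus the whole sum is $O(n^{-(m-1)/2})$, and Corollary~\ref{cor:nonunifgridBpsi} delivers
\begin{equation*}
\lVert P_T f - Q_{t_1-t_0}\dotsm Q_{t_n-t_{n-1}} f\rVert_{\psi}
\le C n^{-(m-1)/2}\lVert\nabla f\rVert_{\psi}
\quad\text{for $f\in\mathrm{C}_b^{\infty}(\mathbb{R}^N)$,}
\end{equation*}
which is optimal for a cubature formula of order $m$.

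Second, extend the estimate to arbitrary $f\in\mathcal{B}^{\psi}(\mathbb{R}^N)$ with $\nabla f\in\mathcal{B}^{\psi}(\mathbb{R}^N;\mathbb{R}^N)$. The idea is to construct a sequence $f_k\in\mathcal{A}(\mathbb{R}^N)$ with $f_k\to f$ in $\mathcal{B}^{\psi}$ and $\nabla f_k\to\nabla f$ in $\mathcal{B}^{\psi}$. In $\mathbb{R}^N$ this is achieved by convolution with a standard mollifier $\rho_{\varepsilon}$ and multiplication by a smooth cutoff, relying on the submultiplicativity $\cosh(\alpha\lVert x\rVert) \le C\cosh(\alpha\lVert x-y\rVert)\cosh(\alpha\lVert y\rVert)$ together with the compact support of $\rho_{\varepsilon}$ to control $\lVert f*\rho_{\varepsilon} - f\rVert_{\psi}$ and $\lVert\nabla(f*\rho_{\varepsilon}) - \nabla f\rVert_{\psi}$. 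The operators $P_T$ and $Q_{t_1-t_0}\dotsm Q_{t_n-t_{n-1}}$ are bounded on $\mathcal{B}^{\psi}(\mathbb{R}^N)$, uniformly in $n$: for $P_T$ this follows from Proposition~\ref{prop:coshalphacompat}, while for the cubature composition it follows by iterating Theorem~\ref{thm:stabilityfinitedimensions}, whose hypothesis \eqref{eq:stabilityfinitedimensions-assumption} is easily verified for $\psi(x)=\cosh(\alpha\lVert x\rVert)$ and bounded, $\mathrm{C}^{\infty}$-bounded vector fields. Passing to the limit $k\to\infty$ in the smooth estimate completes the argument.

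The main obstacle is exactly the simultaneous density step: one needs approximation of $f$ and $\nabla f$ by smooth cylindrical functions in the same $\psi$-weighted norm. The mollification approach works in finite dimensions, but the smoothing radius and the cutoff radius must be balanced carefully since the weight is exponential at infinity; the assumption $\nabla f\in\mathcal{B}^{\psi}$, rather than merely $\nabla f\in\mathrm{B}^{\psi}$, is precisely what guarantees that the cutoff error in the gradient vanishes in the limit.
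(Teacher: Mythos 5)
Your proposal is correct and follows the same route as the paper: apply Corollary~\ref{cor:nonunifgridBpsi} on the graded mesh and conclude by density. You simply fill in the details the paper leaves implicit -- the explicit mesh computation (which the paper delegates to \cite[Example~3.7]{LyonsVictoir2004}) and, usefully, the observation that the density step requires simultaneous approximation of $f$ and $\nabla f$ in the $\psi$-weighted norm together with the $n$-uniform stability of the composed cubature operators, which is exactly where the hypothesis $\nabla f\in\mathcal{B}^{\psi}(\mathbb{R}^N)$ enters.
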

\begin{proof}
	This follows directly from Corollary~\ref{cor:nonunifgridBpsi} together with the density of $\mathrm{C}_b^{\infty}(\mathbb{R}^N)$ in $\mathcal{B}^{\psi}(\mathbb{R}^N)$.
\end{proof}

\appendix
\section{A counterexample}
\label{sec:counterexample}
Not every admissible weight function is D-admissible, as already the counterexample $X=\mathbb{R}$, $\psi(x):=1+x^2+x^{-1}\chi_{(0,\infty)}$ with $\chi_A(x):=1$ for $x\in A$ and $0$ for $x\notin A$ the indicator of $A$ from \cite[Remark~4.6]{DoersekTeichmann2010} shows.
However, such an assumption is necessary to be able to transfer differentiability properties to limits when using weighted supremum norms.

Let us consider a concrete example.
Choose the admissible weight function $\psi(x):=1+x^2\chi_{(-\infty,0)}+x^{-2}\chi_{(0,\infty)}$ on $X=\mathbb{R}$.
Choose bounded, smooth functions $f_n$, $n\in\mathbb{N}$, by
\begin{equation}
	f_n(x):=
	\begin{cases}
		0, & x\le 0, \\
		1, & x\ge n^{-1},
	\end{cases}
\end{equation}
and monotone and smooth on $(0,n^{-1})$ such that $0\le f_n(x)\le 1$ for all $x\in\mathbb{R}$.
This can be done in such a way that $\lvert f_n'(x)\rvert\le Cn$ on $(0,n^{-1})$ for some $C>0$ independent of $n\in\mathbb{N}$, for example by choosing $f_1$ as required and setting $f_n(x):=f_1(nx)$.
Then, for $n\in\mathbb{N}$ and $m\ge n$,
\begin{alignat}{2}{}
	\lVert f_n - f_m \rVert_{\psi}
	&\le
	2\sup_{x\in(0,n^{-1})}\psi(x)^{-1}
	=
	2(1+n)^{-2} \quad\text{and} \\
	\lvert f_n \rvert_{\psi,1}
	&\le
	2\sup_{x\in(0,n^{-1})}\psi(x)^{-1}Cn
	=
	2C\frac{n}{(1+n)^2},
	\intertext{from which}
	\lvert f_n - f_m \rvert_{\psi,1}
	&\le
	2C\left( \frac{n}{(1+n)^2} + \frac{m}{(1+m)^2} \right).
\end{alignat}
It follows that $(f_n)_{n\in\mathbb{N}}$ is a Cauchy sequence in $\mathrm{B}^{\psi}_1(\mathbb{R})$.
As evaluation functionals are continuous, we see that the only candidate for the limit is $f=\chi_{(0,\infty)}$.
This function, however, is not in $\mathrm{B}^{\psi}_1(\mathbb{R})$, and is not even continuous.

This is not a contradiction to the characterization of $\mathcal{B}^{\psi}(\mathbb{R})$ by continuity, as no set $K_R:=\left\{ x\in\mathbb{R}\colon \psi(x)\le R \right\}$ contains a neighborhood of $x=0$.


\def\cprime{$'$}
\providecommand{\bysame}{\leavevmode\hbox to3em{\hrulefill}\thinspace}
\providecommand{\MR}{\relax\ifhmode\unskip\space\fi MR }
\providecommand{\MRhref}[2]{%
  \href{http://www.ams.org/mathscinet-getitem?mr=#1}{#2}
}
\providecommand{\href}[2]{#2}

\end{document}